\newtheorem{theorem}{Theorem}[section]
\newtheorem{lemma}[theorem]{Lemma}
\newtheorem{proposition}[theorem]{Proposition}
\theoremstyle{definition}
\newtheorem{definition}[theorem]{Definition}
\theoremstyle{remark}
\newtheorem{remark}[theorem]{Remark}
\newtheorem{corollary}[theorem]{Corollary}
\newtheorem{example}[theorem]{Example}
\newcommand{\norm}[1]{\left\lVert#1\right\rVert}
\DeclarePairedDelimiter{\floor}{\lfloor}{\rfloor}
\newcommand{\seq}[1]{\{{#1}\}}
\newcommand{\bignorm}[1]{\left\lVert {#1} \right\rVert}
\def\EE{\mathbb{E}}
\def\PP{\mathbb{P}}
\def\NN{\mathbb{N}}
\def\QQ{\mathbb{Q}}
\def\RR{\mathbb{R}}
\def\BB{\mathbb{B}}
\newcommand{\WEPAomega}{\textsf{WE}\mbox{-}\textsf{PA}^\omega}
\newcommand{\QFAC}{\textsf{QF}\mbox{-}\textsf{AC}}
\newcommand{\DC}{\textsf{DC}}
\begin{document}
\title[A finitary Kronecker's lemma and the Strong Law of Large Numbers]{A finitary Kronecker's lemma and large deviations in the Strong Law of Large Numbers on Banach spaces}
\author{Morenikeji Neri}
\date{\today}
\maketitle
\vspace*{-5mm}
\begin{center}
{\scriptsize 
Department of Computer Science, University of Bath\\
E-mail: mn728@bath.ac.uk}
\end{center}
\maketitle

\begin{abstract}
We explore the computational content of Kronecker's lemma via the proof-theoretic perspective of proof mining and utilise the resulting finitary variant of this fundamental result to provide new rates for the Strong Law of Large Numbers for random variables taking values in type $p$ Banach spaces, which in particular are very uniform in the sense that they do not depend on the distribution of the random variables. Furthermore, we provide computability-theoretic arguments to demonstrate the ineffectiveness of Kronecker's lemma and investigate the result from the perspective of Reverse Mathematics. In addition, we demonstrate how this ineffectiveness from Kronecker's lemma trickles down to the Strong Law of Large Numbers by providing a construction that shows that computable rates of convergence are not always possible. Lastly, we demonstrate how Kronecker's lemma falls under a class of deterministic formulas whose solution to their Dialectica interpretation satisfies a continuity property and how, for such formulas, one obtains an upgrade principle that allows one to lift computational interpretations of deterministic results to quantitative results for their probabilistic analogue. This result generalises the previous work of the author and Pischke.
\end{abstract}
\noindent
{\bf Keywords:}   Proof mining; Kronecker's lemma; Large deviations; Probability theory; Laws of Large Numbers\\ 
{\bf MSC2020 Classification:} 03F10, 03B30, 60B12, 60F10, 60F15.

\section{Introduction}\label{sec:intro}
The proof mining program, which evolved from Kreisel’s ‘unwinding’ program \cite{kreisel:51:proofinterpretation:part1,kreisel:52:proofinterpretation:part2} and was further developed by Kohlenbach and collaborators, has been successful in extracting computational content from results in analysis and optimisation. This success is largely due to foundational results, in particular, the so-called proof mining metatheorems (see, for example, \cite{gerhardy2008general,kohlenbach2005some,kohlenbach:08:book}), which demonstrate that significant portions of analysis can be formalised in systems allowing for program extraction through proof interpretations (typically modifications of G{\"o}del’s Dialectica interpretation \cite{godel1958bisher}).

At first glance, probability theory might seem beyond the reach of proof mining methods due to the level of comprehension required to define key concepts, such as countable unions. However, by applying the methodology of proof interpretations, Avigad and collaborators were able to extract the computational content for some key results in probability and measure theory \cite{Avigad-Dean-Rute:Dominated:12,AVIGAD-GERHARDY-TOWSNER:10:Ergodic}. It was realised that the tameness of these ad hoc case studies in probability and measure theory was largely due to their limited use of infinite unions. Based on this observation, the author and Pischke developed a formal system and corresponding metatheorem \cite{NeriPischke2023}, the first such for proof mining and probability theory. This article explores the computational content of the Laws of Large Numbers in light of the formal methodology introduced in \cite{NeriPischke2023}.

The Law of Large Numbers is an important concept in probability theory. It is an empirical fact that the observed outcomes of an experiment should get closer to the expected value as the sample size increases. This observation has been made mathematically rigorous by the likes of Bernoulli, Markov, and Khintchine \footnote{For a detailed historical and technical explanation of the Laws of Large Numbers, one can refer to Seneta's work \cite{Seneta13}.}. However, Kolmogorov \cite{Kol1930} is credited with the Strong Law of Large Numbers, which states:

\begin{theorem}[Strong Law of Large Numbers]\label{thrm:SLLN}
Suppose $X_0, X_1,\ldots$ are independent, identically distributed (iid) real-valued random variables with  $\EE(|X_0|) < \infty$. Then,
\begin{equation*}
       \frac{1}{n}\sum_{i=0}^nX_i\to \EE(X_0)
\end{equation*}
 almost surely, that is, with probability $1$.
\end{theorem}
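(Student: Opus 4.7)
The plan is to follow Kolmogorov's classical strategy, in which the role of Kronecker's lemma --- the central subject of this paper --- is made explicit. Concretely, I would reduce the almost sure convergence of the Ces\`aro averages to the almost sure convergence of a series of independent, centred random variables, and then apply Kronecker's lemma pathwise on the full-measure set where that series converges.

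First, I would truncate: set $Y_n := X_n \mathbf{1}_{\{|X_n| \leq n\}}$. Since $\EE(|X_0|) < \infty$, we have $\sum_n \PP(X_n \neq Y_n) = \sum_n \PP(|X_0| > n) \leq \EE(|X_0|) < \infty$, so by the Borel--Cantelli lemma, almost surely $X_n = Y_n$ for all but finitely many $n$. Hence it suffices to prove the conclusion with $Y_n$ in place of $X_n$. Since $\EE(Y_n) \to \EE(X_0)$ by dominated convergence, a Ces\`aro argument further reduces the problem to showing $\frac{1}{n}\sum_{i=0}^n (Y_i - \EE(Y_i)) \to 0$ almost surely.

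Next, I would estimate the variances: using $|Y_n| \leq n$ together with a summation-by-parts or Fubini computation, one shows $\sum_{n \geq 1} \EE(Y_n^2)/n^2 \leq C \cdot \EE(|X_0|) < \infty$. By Kolmogorov's convergence criterion --- itself a consequence of his maximal inequality applied to partial sums of independent, centred variables with summable variances --- the series $\sum_{n \geq 1} (Y_n - \EE(Y_n))/n$ then converges almost surely.

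Finally, applying Kronecker's lemma pathwise with $a_n := Y_n - \EE(Y_n)$ and weights $b_n := n$ on the full-measure set where $\sum_n a_n/n$ converges yields the desired conclusion. I expect the main technical obstacle to be the variance estimate $\sum_n \EE(Y_n^2)/n^2 < \infty$: one must carefully exchange sum and integral and combine the bound $|Y_n| \leq n$ with the elementary tail identity $\sum_{n \geq k} 1/n^2 = O(1/k)$ applied on the level sets $\{|X_0| \in [k,k+1)\}$, which cannot be reduced to a pure black-box citation and ultimately is what allows the condition $\EE(|X_0|) < \infty$ (rather than the stronger $\EE(X_0^2) < \infty$) to suffice.
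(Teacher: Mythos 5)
The paper does not actually prove Theorem~\ref{thrm:SLLN}: it is stated as classical background and attributed to Kolmogorov, and the paper's own technical work concerns Kronecker's lemma and Chung's/Woyczynski's laws (Theorems~\ref{thrm:KLLN}, \ref{thrm:chung}, \ref{thrm:chung:banach}), not the iid case. So there is no proof in the paper to compare against directly. That said, your sketch is a correct outline of Kolmogorov's classical argument, and it identifies exactly the chain the paper is concerned with: truncate at level $n$, apply Borel--Cantelli using $\sum_n \PP(|X_0|>n)\le\EE|X_0|$, centre and pass to the Ces\`aro averages of expectations, verify $\sum_n \EE(Y_n^2)/n^2 \le C\,\EE|X_0|$ by reorganising the double sum over level sets $\{k-1<|X_0|\le k\}$ and using $\sum_{n\ge k} n^{-2}=O(1/k)$ together with $X_0^2\le k|X_0|$ there, invoke Kolmogorov's convergence criterion to get almost sure convergence of $\sum_n (Y_n-\EE Y_n)/n$, and finish by Kronecker's lemma applied pathwise. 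This is precisely the route the paper alludes to in its concluding remarks when it notes that both Kolmogorov's and Etemadi's proofs rely crucially on the distribution of the variables.

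One small notational slip: at the final step you write ``with $a_n := Y_n - \EE(Y_n)$ and weights $b_n := n$,'' but to align with the paper's Theorem~\ref{thrm:realkron} you want $x_n := (Y_n - \EE Y_n)/n$ as the summands and $a_n := n$ as the weights, so that $\frac{1}{a_n}\sum_{i\le n} a_i x_i = \frac{1}{n}\sum_{i\le n}(Y_i - \EE Y_i)\to 0$. The underlying idea is right. You should also make explicit that $\mathrm{Var}(Y_n)\le\EE(Y_n^2)$ so that the variance estimate feeds into Kolmogorov's criterion, but that is routine. Overall, the proposal is sound and faithfully reconstructs the classical proof that the paper relies upon implicitly but does not reproduce.
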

 Throughout this article, we typically write $S_n := \sum_{i=0}^nX_i$ and assume $\EE(X_0) = 0$. 

 Variants of the above Strong Law of Large numbers commonly studied in the literature are concerned with weakening the identical distribution assumption in Theorem \ref{thrm:SLLN}, which, however, entails that other additional assumptions must be included if we still want to conclude  $\frac{S_n}{n}\to 0$ almost surely. One such variant, concerned with the case when the $\seq{X_n}$ are no longer identically distributed, is the following other classical result of Kolmogorov:

\begin{theorem}[Kolmogorov's Strong Law of Large Numbers]\label{thrm:KLLN}
    Suppose $\seq{X_n}$ is a sequence of independent real-valued random variables with $\EE(X_n) = 0$ for all $n \in \NN$ and 
\begin{equation*}
        \sum_{i=0}^\infty \frac{Var(X_n)}{n^2} < \infty.
\end{equation*}
Then $\frac{S_n}{n} \to 0$ almost surely.
\end{theorem}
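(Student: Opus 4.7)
The plan is to follow the classical two-step argument, which is precisely the route that motivates the paper's focus on Kronecker's lemma. First I would consider the scaled sequence $Y_n := X_n/n$, observe that $\EE(Y_n) = 0$, and note that by independence the partial sums $T_N := \sum_{n=1}^N Y_n$ form a mean-zero $L^2$-martingale whose variance telescopes to
\begin{equation*}
    \EE(T_N^2) = \sum_{n=1}^N \frac{\mathrm{Var}(X_n)}{n^2},
\end{equation*}
which by hypothesis is uniformly bounded in $N$. The goal of the first step is to upgrade this $L^2$-boundedness to almost sure convergence of the series $\sum_{n=1}^\infty X_n/n$.

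The key tool for the first step is Kolmogorov's maximal inequality: for independent mean-zero random variables $Z_1,\dots,Z_m$ with finite variance,
\begin{equation*}
    \PP\!\left(\max_{1 \le k \le m}\Big|\sum_{i=1}^k Z_i\Big| \ge \varepsilon\right) \le \frac{1}{\varepsilon^2}\sum_{i=1}^m \mathrm{Var}(Z_i).
\end{equation*}
Applied with $Z_i = Y_{N+i}$, this bounds the probability that the tail $\sup_{M \ge N}|T_M - T_N|$ exceeds $\varepsilon$ by $\varepsilon^{-2}\sum_{n>N}\mathrm{Var}(X_n)/n^2$, which tends to $0$ as $N \to \infty$. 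A standard Borel--Cantelli argument along a sufficiently sparse subsequence $N_k$ then yields that $\seq{T_N}$ is almost surely Cauchy, hence almost surely convergent. Call the limit $T$.

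Once the almost sure convergence of $\sum X_n/n$ is in hand, the second step is an immediate pointwise application of Kronecker's lemma with $a_n = X_n(\omega)$ and $b_n = n$: for every $\omega$ in the full-measure event on which $T_N(\omega)$ converges, Kronecker's lemma yields
\begin{equation*}
    \frac{1}{n}\sum_{k=1}^n X_k(\omega) = \frac{1}{n}\sum_{k=1}^n b_k \cdot \frac{X_k(\omega)}{b_k} \to 0,
\end{equation*}
and this is exactly the desired conclusion $S_n/n \to 0$ almost surely.

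The main obstacle is really the first step: the passage from the finite second-moment bound $\sum \mathrm{Var}(X_n)/n^2 < \infty$ to almost sure convergence of the series is the quantitatively and computationally delicate part, since it combines an independence-based maximal inequality with a tail-probability argument. By contrast, the second step is entirely deterministic, which is precisely why the paper isolates Kronecker's lemma for a finitary/proof-mining analysis --- once one has quantitative control on the rate of convergence of $\sum X_n/n$ (e.g.\ via large-deviation bounds for $\sup_{M\ge N}|T_M - T_N|$), a finitary Kronecker's lemma converts this directly into quantitative information about $S_n/n$.
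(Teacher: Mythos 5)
Your two-step argument --- first establish almost sure convergence of $\sum_n X_n/n$ via Kolmogorov's maximal inequality and Borel--Cantelli, then apply Kronecker's lemma pointwise with $a_n = n$ and $x_n = X_n(\omega)/n$ --- is exactly the strategy the paper sketches in the discussion following Theorem~\ref{thrm:realkron}, and your filling-in of the first step is correct. (One small notational slip: you write ``Kronecker's lemma with $a_n = X_n(\omega)$'', but as your displayed equation makes clear, the summand fed to Kronecker is $x_n = X_n(\omega)/n$ and the weight sequence is $a_n = n$, matching the paper's convention.)
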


Furthermore, this result can be generalised, leading to the seminal Law of Large Numbers due to Chung:

\begin{theorem}[Chung's Law of Large Numbers c.f.\ \cite{chung:47:LLN}]
\label{thrm:chung}
   Suppose $\seq{X_n}$ is a sequence of independent real-valued random variables with $\EE(X_n) = 0$ for all $n \in \NN$. For each $n \in \NN$, let $\phi_n : \RR^+ \to \RR^+$ be a function such that
   \begin{equation}
   \label{eqn:chung}
      \frac{\phi_n(t)}{t} \mbox{ and } \frac{t^2}{\phi_n(t)}
   \end{equation}
 are nondecreasing and assume
 $$ \sum_{i=0}^\infty \frac{\EE(\phi_n(|X_n|))}{\phi_n(n)} < \infty.$$
Then $\frac{S_n}{n} \to 0$ almost surely.
\end{theorem}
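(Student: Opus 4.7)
The plan is to use the classical truncation argument, reducing Chung's theorem to Kolmogorov's Theorem~\ref{thrm:KLLN} (applied to truncated, centered variables) combined with a use of Kronecker's lemma to handle the deterministic bias term. Set $Y_n := X_n \mathbb{1}_{|X_n|\le n}$ and $T_n := \sum_{k=0}^n Y_k$. The proof then naturally decomposes as
\begin{equation*}
\frac{S_n}{n} = \frac{S_n - T_n}{n} + \frac{1}{n}\sum_{k=0}^n(Y_k - \EE Y_k) + \frac{1}{n}\sum_{k=0}^n \EE Y_k,
\end{equation*}
and I would show each summand tends to $0$ almost surely.

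For the first term, I would use the monotonicity of $\phi_n(t)/t$ to get, on the event $\{|X_n|>n\}$, the lower bound $\phi_n(|X_n|) \ge (|X_n|/n)\phi_n(n) \ge \phi_n(n)$. Markov's inequality then yields $\PP(|X_n|>n) \le \EE(\phi_n(|X_n|))/\phi_n(n)$, which is summable by hypothesis, so Borel--Cantelli gives $X_n = Y_n$ eventually almost surely. Hence $S_n - T_n$ is eventually constant a.s., and the first term goes to $0$.

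For the middle term, I would apply Kolmogorov's Strong Law (Theorem~\ref{thrm:KLLN}) to the independent, mean-zero sequence $\seq{Y_n - \EE Y_n}$. The nondecrease of $t^2/\phi_n(t)$ implies that on $\{|X_n|\le n\}$ we have $X_n^2 \le (n^2/\phi_n(n))\phi_n(|X_n|)$, giving
\begin{equation*}
\mathrm{Var}(Y_n) \le \EE(Y_n^2) \le \frac{n^2}{\phi_n(n)} \EE(\phi_n(|X_n|)),
\end{equation*}
so $\sum_n \mathrm{Var}(Y_n)/n^2 < \infty$ and Theorem~\ref{thrm:KLLN} applies.

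For the bias term, since $\EE X_n = 0$, we have $|\EE Y_n| \le \EE(|X_n|\mathbb{1}_{|X_n|>n})$, and now using the nondecrease of $\phi_n(t)/t$ in the opposite direction (on $\{|X_n|>n\}$ we get $|X_n| \le (n/\phi_n(n))\phi_n(|X_n|)$) produces $|\EE Y_n|/n \le \EE(\phi_n(|X_n|))/\phi_n(n)$. Thus $\sum_n \EE Y_n / n$ is absolutely convergent, and Kronecker's lemma gives $\frac{1}{n}\sum_{k=0}^n \EE Y_k \to 0$. Summing the three vanishing contributions yields $S_n/n \to 0$ a.s. The main subtlety is simply bookkeeping: recognising that the two monotonicity conditions on $\phi_n$ exist precisely to control the truncated variance (via $t^2/\phi_n(t)$) and the truncated tail/bias (via $\phi_n(t)/t$), with the single summability hypothesis then producing all three of the estimates above.
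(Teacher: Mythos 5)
Your proof is correct, and it takes a genuinely different route from the one the paper follows. The paper (following Woyczynski, and as quantified in Theorems~\ref{thrm:quant2}--\ref{thrm:quantchung}) first shows that the series $\sum_{k} X_k/a_k$ is almost surely Cauchy, by truncating $X_k/a_k$ at level $1$ and running Kolmogorov's maximal inequality on \emph{both} the small and the large part, and then applies the probabilistic Kronecker's lemma once. You instead decompose $S_n/n$ directly into a tail piece, a centred truncated piece, and a deterministic bias, handling the first by Borel--Cantelli, the second by invoking Theorem~\ref{thrm:KLLN} as a black box, and the third by the deterministic Kronecker's lemma; both arguments deploy the monotonicity of $\phi_n(t)/t$ and $t^2/\phi_n(t)$ in the same two roles (tail/bias control and variance control respectively). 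Your route is the more elementary, textbook-style one; the paper's route is designed to be friendlier to quantitative analysis, since it isolates a single a.s.\ Cauchy statement to which the finitary Kronecker machinery (Theorem~\ref{thrm:quantprobkron1}) applies, whereas a Borel--Cantelli ``eventually constant'' step as in your first term is awkward to turn into a rate of almost sure convergence. Two small points of bookkeeping in your sketch: the $k=0$ summand $\EE Y_0/0$ in the Kronecker application should be indexed from $1$ (or handled separately), and to conclude $\sum_n \PP(|X_n|>n)<\infty$ via Markov you should observe $\{|X_n|>n\}\subseteq\{\phi_n(|X_n|)\ge\phi_n(n)\}$ rather than apply Markov to $|X_n|$ directly; you essentially do this but it is worth stating explicitly.
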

\begin{remark}
   If $\phi_n(t):= t^2$, we immediately obtain Kolmogorov's Strong Law of Large Numbers. 
\end{remark}
The proofs of these results both use a fundamental result about real numbers known as Kronecker's lemma:

\begin{theorem}[Kronecker's lemma]
\label{thrm:realkron}
Let $\seq{x_n}$ be a sequence of real numbers and $0< a_0 \le a_1 \le \cdots$ be such that $a_n \to \infty$. 
If $\sum_{i=0}^\infty x_i < \infty$, then 
    \begin{equation*}
        \frac{1}{a_n}\sum_{i=0}^n a_ix_i \to 0
    \end{equation*}
    as $n \to \infty$. 
\end{theorem}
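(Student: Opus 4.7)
The plan is to prove Kronecker's lemma through the classical route of summation by parts (Abel summation), reducing the claim to a weighted Cesaro-type statement. Let $S_n := \sum_{i=0}^n x_i$ denote the partial sums, set $S_{-1} := 0$, and let $S := \lim_{n\to\infty} S_n$, which exists by the hypothesis that $\sum_{i=0}^\infty x_i$ converges. Writing $x_i = S_i - S_{i-1}$ and applying summation by parts yields
\begin{equation*}
\sum_{i=0}^n a_i x_i \;=\; a_n S_n \;-\; \sum_{i=0}^{n-1}(a_{i+1}-a_i)\, S_i,
\end{equation*}
so that after dividing by $a_n$,
\begin{equation*}
\frac{1}{a_n}\sum_{i=0}^n a_i x_i \;=\; S_n \;-\; \frac{1}{a_n}\sum_{i=0}^{n-1}(a_{i+1}-a_i)\, S_i.
\end{equation*}

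Next I would show that the second term on the right tends to $S$, so that the full expression converges to $S-S=0$. The key observation is that the weights $(a_{i+1}-a_i)/a_n$ are nonnegative by monotonicity of $\{a_n\}$ and telescope to give $\sum_{i=0}^{n-1}(a_{i+1}-a_i)/a_n = (a_n - a_0)/a_n \to 1$. Given $\varepsilon > 0$, pick $N$ so that $|S_i - S| < \varepsilon$ for all $i \ge N$, and split the sum at $N$. The head $\frac{1}{a_n}\sum_{i=0}^{N-1}(a_{i+1}-a_i)\, S_i$ is a fixed quantity divided by $a_n \to \infty$, hence tends to $0$; the tail approximates $S \cdot (a_n - a_N)/a_n$ within $\varepsilon(a_n - a_N)/a_n \le \varepsilon$. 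Combining these two estimates and letting $\varepsilon$ tend to zero shows that the second term converges to $S$, which completes the proof.

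The step I expect to be most delicate — and the natural point of entry for the proof mining analysis promised in the introduction — is the splitting argument: the choice of $N$ depends on a rate of convergence for $\sum x_i$, while the smallness of the head term requires a growth modulus for $\{a_n\}$ measured against $\max_{i<N}|S_i|$. Extracting an explicit modulus of convergence for $\frac{1}{a_n}\sum_{i=0}^n a_i x_i \to 0$ will therefore require quantitative witnesses for both inputs, and the (necessary) dependence on the growth of $\{a_n\}$ is presumably the source of the ineffectiveness that the paper goes on to study in the following sections.
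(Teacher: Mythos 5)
Your proof is correct and follows essentially the same route as the paper: Abel summation to rewrite $\frac{1}{a_n}\sum_{i=0}^n a_i x_i$ as $S_n - \frac{1}{a_n}\sum_{i=0}^{n-1}(a_{i+1}-a_i)S_i$, then splitting the weighted sum at a threshold $N$ beyond which the partial sums have stabilised within $\varepsilon$. The only cosmetic difference is that you compare the tail terms to the limit $S$, whereas the paper compares them to a fixed reference partial sum $s_M$ (a Cauchy-style argument that avoids naming the limit), a distinction that is immaterial here but becomes the right one to make in the finitary/metastable analysis you correctly flag in your closing paragraph.
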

Concretely, in the proofs of Theorems \ref{thrm:KLLN} and \ref{thrm:chung}, one first establishes that 
$$\sum_{i=0}^\infty \frac{X_n}{n} < \infty$$
almost surely. Then, for each $\omega \in \Omega$, we have that
$$\sum_{i=0}^\infty \frac{X_n(\omega)}{n} < \infty \text{ implies } \frac{1}{n}\sum_{i=0}^n X_k(\omega) \to 0$$
by Kronecker's lemma with $a_n = n$ and $x_i = X_i(\omega)/i$. Thus
$$1=\PP\left(\left\{\omega \in \Omega:\sum_{i=0}^\infty \frac{X_n(\omega)}{n} < \infty\right\}\right) \le \PP\left(\left\{\omega \in \Omega:\frac{1}{n}\sum_{i=0}^n X_k(\omega) \to 0\right\}\right). $$

One can recognise this approach  as an application of the following (similarly proven) probabilistic analogue to Kronecker's lemma:

\begin{theorem}[Probabilistic Kronecker's lemma]
\label{thrm:realprobkron}
    Let $\seq{X_n}$ be a sequence of real-valued random variables and $0< a_0 \le a_1 \le \cdots$ be such that $a_n \to \infty$. 
If $\sum_{i=0}^\infty X_i < \infty$ almost surely, then 
    \begin{equation*}
        \frac{1}{a_n}\sum_{i=0}^n a_iX_i \to 0
    \end{equation*}
    almost surely.
\end{theorem}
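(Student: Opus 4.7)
The plan is to reduce Theorem \ref{thrm:realprobkron} to its deterministic counterpart, Theorem \ref{thrm:realkron}, by a pointwise argument on a set of full probability measure. This is essentially the same reduction sketched in the introduction for the Laws of Large Numbers, just isolated as its own lemma.

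First, I would introduce the set
$$A := \left\{\omega \in \Omega : \sum_{i=0}^\infty X_i(\omega) \text{ converges in } \RR\right\},$$
which is measurable since it can be written via the Cauchy criterion as
$$A = \bigcap_{k \in \NN} \bigcup_{N \in \NN} \bigcap_{m \geq n \geq N} \left\{\omega : \left| \sum_{i=n+1}^m X_i(\omega) \right| \leq 2^{-k}\right\},$$
a countable combination of preimages under the measurable partial-sum functions $S_n$. By hypothesis, $\PP(A) = 1$.

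Next, fix any $\omega \in A$ and apply Theorem \ref{thrm:realkron} with $x_i := X_i(\omega)$ and the given deterministic sequence $\seq{a_n}$ (which already satisfies the required monotonicity and divergence conditions uniformly in $\omega$). This yields
$$\frac{1}{a_n}\sum_{i=0}^n a_i X_i(\omega) \to 0 \quad \text{as } n \to \infty$$
for every $\omega \in A$. Letting $B$ denote the measurable set on which this convergence holds (measurability again follows from a Cauchy criterion applied to the measurable random variables $\frac{1}{a_n}\sum_{i=0}^n a_i X_i$), we have $A \subseteq B$, so $\PP(B) \geq \PP(A) = 1$, giving almost sure convergence.

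I do not expect any genuine obstacle: the deterministic Kronecker's lemma does all the mathematical work, and the probabilistic version is obtained by a routine pointwise lift whose only subtlety is the measurability bookkeeping. The real interest, as the paper's abstract suggests, lies not in this classical proof but in its highly ineffective character, which forces any quantitative or finitary reformulation (and any resulting quantitative SLLN) to proceed by extracting computational content from Kronecker's lemma itself rather than from this one-line reduction.
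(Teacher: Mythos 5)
Your proof is correct and matches the paper's own approach: the paper sketches exactly this pointwise reduction to the deterministic Kronecker's lemma in the introduction (in the passage leading up to the statement of Theorem~\ref{thrm:realprobkron}, which it describes as ``similarly proven''), and its proof of the Banach-space version (Theorem~\ref{thrm:probkron}) again reduces to ``a pointwise application of Kronecker's lemma'' plus measurability/Egorov bookkeeping. Your treatment of the measurability of the convergence set via the Cauchy criterion is the standard way to make that step rigorous and is consistent with the paper's own handling of the analogous set.
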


In this article, we study Kronecker's lemma from a proof-theoretic perspective. We prove a quantitative finitary version of Kronecker's lemma for sequences in arbitrary normed spaces by solving its Dialectica interpretation. In addition, we demonstrate the ineffectiveness of Kronecker's lemma in that computable rates are not always present for the conclusion of the result. Furthermore, we prove that the existence of rates for Kronecker's lemma is equivalent to $\mathrm{ACA_0}$ when viewed from the perspective of Reverse Mathematics \cite{simpson2009subsystems}.

We shall then use our proof-theoretic investigation of Kronecker's lemma to study its probabilistic analogue and the Laws of Large Numbers. We shall use the quantitative version of Kronecker's lemma, which we obtained, to give a computational interpretation of its probabilistic analogue. Furthermore, we are able to generalise our strategy for obtaining a quantitative version of the probabilistic analogue of Kronecker's lemma. This will result in a general proof-theoretic transfer principle for obtaining quantitative versions of probabilistic results from their deterministic analogues that generalise \cite[Theorem 10.2]{NeriPischke2023}. Lastly, we use our quantitative results to give a computational interpretation to the generalisation of Theorem \ref{thrm:chung} to Banach spaces given in \cite{woyczynski1974random}, and we demonstrate the ineffectiveness of this result by showing that computable rates do not always exist.

%%%%%%%%%%%%%%%%%%%%%%%%%%%%%%%%%%%%%%%%%%%%%%%%%%%%%%%%%%%%%%%%%%%%%%%%%%%%%%%%%%%%%%%%%%
%%%%%%%%%%%%%%%%%%%%%%%%%%%%%%%%%%%%%%%%%%%%%%%%%%%%%%%%%%%%%%%%%%%%%%%%%%%%%%%%%%%%%%%%%%
\subsection{Related work}
\label{subsec:Related}
%%%%%%%%%%%%%%%%%%%%%%%%%%%%%%%%%%%%%%%%%%%%%%%%%%%%%%%%%%%%%%%%%%%%%%%%%%%%%%%%%%%%%%%%%%
%%%%%%%%%%%%%%%%%%%%%%%%%%%%%%%%%%%%%%%%%%%%%%%%%%%%%%%%%%%%%%%%%%%%%%%%%%%%%%%%%%%%%%%%%%
Quantitatively studying the Laws of Large Numbers is an active area of research. Several recent articles have been on this topic, including \cite{siegmund1975large,kachurovskii2019measuring,korchevsky2018rate,luzia_2018}. However, the work by the author in \cite{Ner2023:LLN} is the only research effort to achieve such results using the proof mining methodology.

Moreover, there has been a recent resurgence in applying proof theory to probability theory. In collaboration with Powell \cite{NeriPowell:martingales:2024}, the author has built on the work of Avigad, Gerhardy and Towsner \cite{AVIGAD-GERHARDY-TOWSNER:10:Ergodic} to obtain various quantitative results about stochastic processes, including Doob's martingale convergence theorem. Further developments have also been made in the applications of proof theory to the study of stochastic processes; we highlight another collaboration of the author and Powell \cite{NeriPowell:RS:2024}, where they provide a quantitative version of the celebrated Robbins-Siegmund theorem \cite{robbins1971convergence} and the recent work of Pischke and Powell \cite{PischkePowell:Halpern:2024} on obtaining quantitative results concerning a newly introduced stochastic Halpern scheme.

Finally, we mention the recent articles \cite{freund2022re,freund2023bounds} and \cite{arthan2021borel}. Although they are not as directly related as the previously mentioned works,\cite{freund2022re,freund2023bounds} represents the only applications of the proof mining methodology in extracting quantitative data from results concerning so-called type $p$ Banach spaces (which will be introduced in the following section) and \cite{arthan2021borel} is the first application of this methodology in probability theory after that of Avigad and his collaborators.
%%%%%%%%%%%%%%%%%%%%%%%%%%%%%%%%%%%%%%%%%%%%%%%%%%
%%%%%%%%%%%%%%%%%%%%%%%%%%%%%%%%%%%%%%%%%%%%%%%%%%%%%%%%%%%%%%%%%%%%%%%%%%%%%%%%%%%%%%%%%%
\subsection{An outline for the rest of the paper}
\label{subsec:paperoutline}
%%%%%%%%%%%%%%%%%%%%%%%%%%%%%%%%%%%%%%%%%%%%%%%%%%%%%%%%%%%%%%%%%%%%%%%%%%%%%%%%%%%%%%%%%%
We start, in Section \ref{sec:prelim}, by introducing the notions of quantitative deterministic converges and probabilistic convergence we shall need in this paper. In addition, we give a brief introduction to probability theory on Banach spaces.

In Section \ref{sec:Kron}, we introduce Kronecker's lemma on arbitrary normed spaces. We first prove a finitary quantitative version of this result and use it to extract metastable rates for the conclusion in terms of metastable rates from the premise. Furthermore, we justify our metastable solution to Kronecker's lemma by demonstrating that computable rates do not exist in general. We further also investigate the existence of rates for Kronecker's lemma from the perspective of Reverse Mathematics \cite{simpson2009subsystems}.

Then, in Section \ref{sec:chung}, We first use our quantitative Kronecker's lemma to obtain rates for its probabilistic analogue. Our quantitative version of the probabilistic Kronecker's lemma is then used to give a computational interpretation to the generalisation of Chung's Law of Large Numbers on type $p$ Banach spaces (c.f.\ Section \ref{subsec:probonbanach}), given in \cite{woyczynski1974random}.

We then investigate Kronecker's lemma in the light of the formal system introduced in \cite{NeriPischke2023}. In particular, we discuss how our quantitative version of Kronecker's lemma satisfies the criterion presented in \cite{NeriPischke2023} needed to obtain a quantitative version of its probabilistic analogue, and we generalise this transfer result.

Lastly, in Section \ref{sec:computability}, we provide a construction demonstrating that computable rates of almost sure convergence (c.f.\ Section \ref{subsec:quantconv}) do not exist in general for Kolmogorov's Strong Law of Large Numbers, demonstrating the ineffectiveness of this result.
%%%%%%%%%%%%%%%%%%%%%%%%%%%%%%%%%%%%%%%%%%%%%%%%%%%%%%%%%%%%%%%%%%%%%%%%%%%%%%%%%%%%%%%%%%
%%%%%%%%%%%%%%%%%%%%%%%%%%%%%%%%%%%%%%%%%%%%%%%%%%%%%%%%%%%%%%%%%%%%%%%%%%%%%%%%%%%%%%%%%%
\section{Mathematical preliminaries}
\label{sec:prelim}
%%%%%%%%%%%%%%%%%%%%%%%%%%%%%%%%%%%%%%%%%%%%%%%%%%%%%%%%%%%%%%%%%%%%%%%%%%%%%%%%%%%%%%%%%%
%%%%%%%%%%%%%%%%%%%%%%%%%%%%%%%%%%%%%%%%%%%%%%%%%%%%%%%%%%%%%%%%%%%%%%%%%%%%%%%%%%%%%%%%%%
 %%%%%%%%%%%%%%%%%%%%%%%%%%%%%%%%%%%%%%%%%%%%%
%%%%%%%%%%%%%%%%%%%%%%%%%%%%%%%%%%%%%%%%%%%%%%%%%%%%%%%%%%%%%%%%%%%%%%%%%%%%%%%%%%%%%%%%%%
\subsection{Probability on Banach spaces}
\label{subsec:probonbanach}
%%%%%%%%%%%%%%%%%%%%%%%%%%%%%%%%%%%%%%%%%%%%%%%%%%%%%%%%%%%%%%%%%%%%%%%%%%%%%%%%%%%%%%%%%%
%%%%%%%%%%%%%%%%%%%%%%%%%%%%%%%%%%%%%%%%%%%%%%%%%%%%%%%%%%%%%%%%%%%%%%%%%%%%%%%%%%%%%%%%%%
Throughout this article, fix a normed space $(\BB, \norm{\cdot})$ and a probability space $(\Omega, \mathcal{F}, \PP)$. We summarise the relevant parts of the theory of random variables taking values in a Banach space. We shall mainly follow the treatment given in \cite{ledoux2013probability}.\\
If $\BB$ is a Banach space, then the natural definition one would give a Borel random variable taking values in $\BB$ is a measurable map from $(\Omega, \mathcal{F}, \PP)$ to $\BB$ endowed with the Borel sigma-algebra generated by its open sets. However, as noted in \cite{ledoux2013probability}, this definition is too general to develop a useful theory of probability (for example, the set of such random variables does not form a vector space. It is not even the case that this class is closed with respect to addition c.f.\ \cite{Masani76}). Therefore, it is standard to assume random variables $X$ are tight (sometimes referred to as Radon), that is,
for all $\varepsilon > 0$ there exists a compact set $K \subseteq \BB$ such that 
\begin{equation*}
    \PP(X \in K)\ge 1 - \varepsilon.
\end{equation*}

Denote the set of tight Borel random variables in $\BB$ by $L_0$. Working with such random variables ensures we can add and multiply them by scalars without worrying about measurability. Furthermore, we also have the set $L_p:= \{X \in L_0\, \vert\, \EE(\norm{X}^p) < \infty\}$ is also a vector space. Lastly, we note that a random variable is tight if and only if it takes values on a separable subset of $\BB$ (c.f.\ \cite[Section 2.1]{ledoux2013probability}), and so it is standard to assume $\BB$ is separable. By random variables, we always refer to elements of $L_0$.

We also introduce the notion of integration for random variables taking values in $\BB$, attributed to Bochner, via the following theorem:
\begin{theorem}[c.f.\ Theorem II.11 of \cite{li2017introduction}]
\label{thrm:bochner}
    There exists a unique linear mapping $\EE : L_1(\BB) \to \BB$ called the expectation such that:
    \begin{itemize}
        \item [(a)] $\EE(X) = \sum_{i=0}^n \PP(A_i)x_i$ for all $X = \sum_{i=0}^n 1_{A_i}x_i$ with $\seq{A_i} \subseteq \mathcal{F}$, and $\seq{x_i} \subseteq \BB$
        \item [(b)] $\norm{\EE(X)} \le \EE(\norm{X})$ for all $X \in L_1(\BB)$.
    \end{itemize}
\end{theorem}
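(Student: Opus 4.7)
The plan is to construct $\EE$ via the standard approximation-from-simple-functions procedure, which is the classical Bochner construction adapted to the tight setting. First I would take the formula in (a) as the \emph{definition} of $\EE$ on the linear space $S$ of simple random variables $X = \sum_{i=0}^n 1_{A_i}x_i$, and verify that it is well-defined: two representations $\sum_i 1_{A_i}x_i = \sum_j 1_{B_j}y_j$ of the same function can be refined to a common partition, after which the identity on expectations is immediate. On $S$ the linearity of $\EE$ is then a routine check, and property (b) reduces, after refining to a disjoint representation, to the triangle inequality $\bignorm{\sum_i \PP(A_i)x_i} \le \sum_i \PP(A_i)\norm{x_i}$.

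Next I would establish that $S \cap L_1(\BB)$ is dense in $L_1(\BB)$. This is where tightness and separability of the range of $X \in L_0$ are used: for $X \in L_1(\BB)$ and $\varepsilon > 0$, one chooses a compact (hence totally bounded) $K$ with $\PP(X \notin K) < \varepsilon$ and a finite $\varepsilon$-net $\{x_0,\dots,x_n\} \subseteq K$, defining $X_\varepsilon := \sum_{i=0}^n 1_{A_i} x_i$ where the $A_i$ partition $\{X \in K\}$ according to the nearest net point. Standard truncation (using $\EE(\norm{X}\,1_{\norm{X} \ge M}) \to 0$ as $M \to \infty$) combined with this argument produces simple approximants $X_n \to X$ in $L_1(\BB)$.

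Given such an approximating sequence, property (b) applied on $S$ gives $\norm{\EE(X_n) - \EE(X_m)} \le \EE(\norm{X_n - X_m})$, so $\seq{\EE(X_n)}$ is Cauchy in $\BB$; by completeness of $\BB$ we may define $\EE(X) := \lim_n \EE(X_n)$, and a standard interleaving argument shows the limit is independent of the approximating sequence. Linearity passes to the limit, and (b) follows from $\norm{\EE(X)} = \lim \norm{\EE(X_n)} \le \lim \EE(\norm{X_n}) = \EE(\norm{X})$. For uniqueness, any linear $\EE'$ satisfying (a) agrees with $\EE$ on $S$ and is continuous with respect to the $L_1$-norm (by (b) applied to $\EE'$ itself), so the two maps coincide on the closure of $S$, which is all of $L_1(\BB)$.

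The main obstacle will be the density step, which is the only place where the tightness assumption on elements of $L_0$ is actually exploited: without tightness one cannot guarantee that $X$ is concentrated on a separable, totally bounded set up to small probability, and the finite $\varepsilon$-net approximation fails. Everything else — well-definedness on $S$, the Cauchy estimate, extension by continuity, and uniqueness — is formal once one has property (b) in place on simple functions and has $\BB$ complete.
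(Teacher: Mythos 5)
The paper does not prove this theorem; it is invoked as a known fact with a citation to Li--Queff\'elec, so there is no internal proof to compare against. Your construction is the standard Bochner-integral argument (define on simple functions, verify (b) there by the triangle inequality, show density via tightness and a finite $\varepsilon$-net plus truncation, extend by continuity using completeness of $\BB$, and get uniqueness from density and the continuity furnished by (b)), and it is correct; one small simplification worth noting is that the truncation step can be absorbed into the absolute continuity of the integral of $\norm{X}$, i.e.\ $\EE(\norm{X}\,1_{A})\to 0$ as $\PP(A)\to 0$, so the $\varepsilon$-net estimate already yields $L_1$-approximation without a separate bounding pass.
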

For $1\le p\le 2$, $\BB$ is said to have (Rademacher) type $p$, if there exists a constant $B$ such that, for every independent sequence of random variables $\seq{\varepsilon_n}$ satisfying 
\begin{equation*}
    \PP(\varepsilon_n = \pm 1) = \frac{1}{2}
\end{equation*}
(such a sequence is sometimes known as a Rademacher sequence) and sequence of element $\seq{x_n}$ in $\BB$, we have 
$$\EE\left(\norm{\sum_{i=0}^nx_i\varepsilon_i}^p\right) \le B\sum_{i=0}^n\norm{x_i}^p.$$
By the triangle inequality, every Banach space is type $1$. Furthermore, if $\BB$ is type $p$, then it is of type $p'$ for all $p' \le p$ (c.f.\ \cite[Proposition 9.12]{ledoux2013probability}) and a Banach space is of type $2$ if and only if it is isomorphic to a Hilbert space. 

In \cite{hoffmann1976law}, it is shown that for $1\le p \le 2$, if $\seq{X_n}$ is an iid sequence of random variables taking values in $\BB$, with $\EE(X_n) = 0$ (where $\EE$ is the Bochner integral introduced in Theorem \ref{thrm:bochner}) and
    $$ \sum_{n=0}^\infty \frac{\EE(\norm{X_n}^p)}{n^p}< \infty,$$ 
    $\BB$ being a type $p$ Banach space is both a necessary and sufficient condition for the conclusion 
    \begin{equation*}
        \frac{S_n}{n} \to 0
    \end{equation*}
    almost surely to hold.

In \cite{woyczynski1974random}, Woyczynski shows that the analogy of Theorem \ref{thrm:chung}, where $(\ref{eqn:chung})$ is replaced with 
   \begin{equation*}
      \frac{\phi_n(t)}{t} \mbox{ and } \frac{t^p}{\phi_n(t)}
   \end{equation*}
being nondecreasing, holds in type $p$ Banach spaces. What Woyczynski actually showed, in \cite{woyczynski1974random}, was that this result holds in spaces such that there exists a constant $C$ satisfying,
\begin{equation}
    \label{eqn:Woy}
    \EE\left(\norm{\sum_{i=0}^nY_i}^p\right) \le C\sum_{i=0}^n\EE(\norm{Y_i}^p)
\end{equation}
 for all independent random variables taking values in $\BB$, $Y_0, \cdots, Y_n$ , with $0$ expected value and finite $pth$ moment \footnote{Woyczynski was working in so-called $\mathcal{G}_\alpha$ which are type $(\alpha -1)$ spaces but they are smoother than general type $(\alpha-1)$ spaces. However, they did not use further properties of such spaces other than the fact that the relation (\ref{eqn:Woy}) was satisfied. So their result does indeed hold in general type $(\alpha-1)$ spaces. This observation was noted in \cite{hoffmann1976law}.}. If $\BB$ is a type $p$ Banach space with constant $B$, then (\ref{eqn:Woy}) holds with $W = (2B)^p$ (c.f.\cite[Proposition 9.11]{ledoux2013probability}). Therefore $\BB$ is type $p$ if and only if $(\ref{eqn:Woy})$ holds. We give a quantitative version of Woyczynski's result in Section \ref{sec:chung}.  %%%%%%%%%%%%%%%%%%%%%%%%%%%%%%%%%%%%%%%%%%%%%%%%%%%%%%%%%%%%%%%%%%%%%%%%%%%%%%%%%%%%%%%%%%
\subsection{Quantitative Convergence}
\label{subsec:quantconv}
%%%%%%%%%%%%%%%%%%%%%%%%%%%%%%%%%%%%%%%%%%%%%%%%%%%%%%%%%%%%%%%%%%%%%%%%%%%%%%%%%%%%%%%%%%
%%%%%%%%%%%%%%%%%%%%%%%%%%%%%%%%%%%%%%%%%%%%%
Suppose $\seq{x_n}$ is a Cauchy sequence of elements in $\BB$. We say the function $r$ is a rate of (Cauchy) convergence for $\seq{x_n}$ if,
\begin{equation*}
    \forall \varepsilon\in\QQ_+\, \forall n,m \ge r(\varepsilon) \left(\norm{x_n - x_m} < \varepsilon\right).
\end{equation*}
It is known that computable rates of convergences do not exist in general, even if the sequence in question is computable \cite{specker:49:sequence}. A rate of convergence is a direct computational interpretation of the definition of Cauchy convergence. When we apply proof interpretations to statements such as Cauchy convergence, the process results in a formula equivalent, over classical logic, to the original statement with a different computational challenge that one can hope to solve. Observe that one can formulate Cauchy convergence as
\begin{equation}
\label{eqn:newCauchy}
    \forall \varepsilon\in\QQ_+\, \exists N \, \forall k \, \forall n,m \in [N;N+k] \left( \norm{x_n - x_m} < \varepsilon\right)
\end{equation}
where $[a;b]:=\{a,a+1,\ldots,b\}$ if $a \le b$ and empty otherwise. Consider the following reformulation of Cauchy convergence:
\begin{equation}
\label{eqn:metastable:Cauchy}
\forall \varepsilon\in\QQ_+\, \forall g:\NN\to \NN\,  \exists N\, \forall n,m\in [N;N+g(N)]( \norm{x_n - x_m}< \varepsilon). 
\end{equation}
This formulation changes the direct computational challenge of Cauchy convergence (that is, a rate of convergence) to finding a functional $\Phi:\QQ_+\times (\NN\to\NN)\to\NN$ bounding $N$ in (\ref{eqn:metastable:Cauchy}) i.e.
\begin{equation}
\label{eqn:ROM}
\forall \varepsilon\in\QQ_+\, \forall g:\NN\to \NN\,  \exists N\leq \Phi(\varepsilon,g)\, \forall n,m\in [N;N+g(N)]( \norm{x_n - x_m}< \varepsilon). 
\end{equation}
Such a functional is known as a rate of (Cauchy) metastability, and the systematic extraction of such rates, using proof-theoretic techniques, are standard results in proof mining with recent results including \cite{freund2023bounds,NeriPowell2023, powell2023finitization}. The idea of metastability was rediscovered in mainstream mathematics by Tao \cite{tao:07:softanalysis,tao:08:ergodic}, who was interested in finitizations of infinitary notions in mathematics and found nontrivial applications in number theory and ergodic theory. 
\begin{remark}
    Going from (\ref{eqn:newCauchy}) to (\ref{eqn:metastable:Cauchy}) can be seen formally as an application of the negative translation in combination with the Dialectica interpretation (see \cite{kohlenbach:08:book} for details) and thus the equivalence of both formulations in a suitably constructive theory follows. However, a proof of equivalence is readily obtainable directly:
    
     (\ref{eqn:metastable:Cauchy}) follows from (\ref{eqn:newCauchy}) since if $N$ is such that $\norm{x_n-x_m}<\varepsilon$ for all $n,m\geq [N; N+k]$, for all $k \in \NN$, then we may set $k = g(N)$ and obtain (\ref{eqn:metastable:Cauchy}). For the other direction, we argue by contradiction. If (\ref{eqn:newCauchy}) does not hold then there exists some $\varepsilon \in \QQ^+$ such that
\begin{equation*}
\forall N\, \exists\, k\,\exists n,m \in [N,N+k] (\norm{x_n-x_m}\ge\varepsilon)
\end{equation*}
and therefore (by the axiom of choice), there exists some function $g:\NN\to\NN$ satisfying
\begin{equation*}
\forall N\, \exists n,m\in [N,N+g(N)](\norm{x_n-x_m}\ge\varepsilon)
\end{equation*}
which contradicts (\ref{eqn:metastable:Cauchy}).
\end{remark}

If $\seq{Y_n}$ is a sequence of random variables, then $Y_n$ converges almost surely simply means
\begin{equation*}
    \mathbb{P}(\seq{Y_n} \text{ converges}) = 1.
\end{equation*}
Observe that the set 
$$\{\omega \in \Omega: \seq{Y_n(\omega)} \text{ converges}\} = \bigcap_{k = 0}^\infty\bigcup_{N = 0}^\infty\bigcap_{i = N}^\infty\bigcap_{j = N}^\infty \{\omega \in \Omega: \norm{Y_i - Y_j} < 2^{-k}\} $$
and since $L_0$ forms a vector space, $Y_i - Y_j$ is a random variable for each $i, j$. Thus, the above set is measurable.\\

It is not obvious how one can give a direct computational interpretation to this definition. However, it can be shown that almost sure convergence is equivalent to almost uniform convergence,
\begin{equation}
\label{eqn:a.u}
  \forall \varepsilon, \lambda \in \QQ^+\, \exists N \in \NN\, \forall k \in \NN\, \left(\mathbb{P}\left(\max_{N\le n \le N+ k}\norm{Y_n -  Y_N} \ge \varepsilon\right)< \lambda\right).
\end{equation}
This result is known as Egorov’s theorem, and the above formulation of almost sure convergence has a clear computational challenge (a function taking $\varepsilon$ and $\lambda$ realising $N$). We call a solution a rate of almost sure (Cauchy) convergence. Furthermore, we can also obtain a metastable notion of almost uniform Cauchy convergence. 
\begin{equation*}
    \forall \varepsilon, \lambda \in \QQ^+\, \forall K:\NN\to \NN\, \exists N \,\left(\mathbb{P}\left(\max_{N \le n \le N+K(N)}\norm{Y_N -  Y_n} \ge \varepsilon\right)< \lambda\right).
\end{equation*}
A solution to the computational challenge (a functional bounding $N$) is known as a rate of almost sure metastable (Cauchy) convergence. Inspired by the idea of metastability for sequences of real numbers, this notion of a rate originated in \cite{Avigad-Dean-Rute:Dominated:12,AVIGAD-GERHARDY-TOWSNER:10:Ergodic}. However, almost sure metastability can be seen, formally, as an application of the double negation translation in combination with the Dialectica translation of a suitable formulation of almost sure Cauchy convergence in the system introduced in \cite{NeriPischke2023}. 

We also have analogous definitions for convergence to a known limit (for example, we could have a rate of convergence to $0$ or a rate of metastable almost sure convergence to a fixed random variable). 

It is a folklore result in applied proof theory that a rate of convergence can be seen as a rate of metastable convergences. More precisely, 

\begin{theorem}[Folklore]
\label{thrm:rocimpram}
   $\Phi : \QQ^+ \to \NN$ is a rate of convergence for a sequence $\seq{x_n}$ iff  $\Phi^M$ defined as $\Phi^M(\varepsilon,g) = \Phi(\varepsilon)$, for all $\varepsilon \in \QQ^+, g:\NN \to \NN$, is a rate of metastability for $\seq{x_n}$.
\end{theorem}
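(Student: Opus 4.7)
The plan is a direct unpacking of the two definitions; no extraction machinery is needed and the result is genuinely elementary, serving mainly to justify the claim that the metastable formulation subsumes the direct one.

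For the forward direction, suppose $\Phi$ is a rate of Cauchy convergence. Given any $\varepsilon \in \QQ^+$ and $g : \NN \to \NN$, I would simply take $N := \Phi(\varepsilon)$ as a witness for the existential in the metastable statement. Then $N \leq \Phi^M(\varepsilon,g) = \Phi(\varepsilon)$ trivially, and for any $n,m \in [N; N + g(N)]$ one has $n,m \geq \Phi(\varepsilon)$, so $\norm{x_n - x_m} < \varepsilon$ by the rate-of-convergence property. Hence $\Phi^M$ is a rate of metastability.

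For the converse, suppose $\Phi^M(\varepsilon,g) := \Phi(\varepsilon)$ is a rate of metastability, and fix $\varepsilon \in \QQ^+$ together with arbitrary $n,m \geq \Phi(\varepsilon)$. The idea is to choose the counting function $g$ so that both $n$ and $m$ are forced to lie in the metastability interval $[N; N + g(N)]$, regardless of which $N \leq \Phi(\varepsilon)$ is produced. Taking $g$ to be the constant function $g(k) := \max(n,m)$ suffices: applying the metastability hypothesis yields some $N \leq \Phi(\varepsilon)$ with $\norm{x_i - x_j} < \varepsilon$ for all $i,j \in [N; N + \max(n,m)]$. Since $N \leq \Phi(\varepsilon) \leq \min(n,m)$ and $\max(n,m) \leq N + \max(n,m)$, both $n$ and $m$ lie in this interval, so $\norm{x_n - x_m} < \varepsilon$. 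As $\varepsilon$ and $n,m \geq \Phi(\varepsilon)$ were arbitrary, $\Phi$ is a rate of convergence.

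There is no real obstacle here; the only subtle point is recognising that because $\Phi^M$ does not depend on $g$, the witness $N$ is uniformly bounded by $\Phi(\varepsilon)$, and this uniform bound is precisely what lets a single cleverly chosen $g$ pin down arbitrary pairs $(n,m)$ beyond $\Phi(\varepsilon)$. Alternatively, one can observe that this is a concrete instance of the more general phenomenon, already pointed out in the remark following \eqref{eqn:ROM}, that the metastable and direct formulations of Cauchy convergence are equivalent via negative translation plus Dialectica; the constant choice $\Phi^M(\varepsilon,g) = \Phi(\varepsilon)$ is exactly the realiser one would read off from a proof that does not inspect $g$.
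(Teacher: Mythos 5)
Your proof is correct and follows essentially the same route as the paper: in the forward direction you plug $N=\Phi(\varepsilon)$ in directly, and in the converse you use the constant counting function $g(k)=\max(n,m)$ to force any given pair $n,m\geq\Phi(\varepsilon)$ into the metastability window $[N;N+g(N)]$, exploiting that $N\leq\Phi(\varepsilon)\leq\min(n,m)$. The closing remark about reading this off the Dialectica interpretation is a pleasant gloss but not needed; the core argument matches the paper's.
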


\begin{proof}
    For the forward direction, let $\varepsilon \in \QQ^+$ and $g:\NN \to \NN$ be given. Then, taking  $N = \Phi^M(\varepsilon,g) = \Phi(\varepsilon)$, we have (from the fact that  $\Phi$ is a rate of convergence) $\forall n,m \ge N\,(|x_n - x_m| < \varepsilon)$. So, in particular $\forall n,m \in [N;N+g(N)](|x_n - x_m| < \varepsilon)$.

    For the converse, let $\varepsilon \in \QQ^+$ be given. Take $p,q \ge \Phi(\varepsilon)$. Define $g:\NN \to \NN$ as, $g(n) = \max\{p,q\}$. Since $\Phi^M$ is a rate of metastability, there exists $N \le \Phi^M(\varepsilon,g) = \Phi(\varepsilon) \le p, q$ such that $\forall n,m \in [N; N +  \max\{p,q\}] (|x_n - x_m| < \varepsilon)$. Since it is clear that both $p, q \in [N; N +  \max\{p,q\}]$ we have $|x_p - x_q| < \varepsilon$. 
\end{proof}

Through a simple adaptation of the above proof, we obtain a similar result for rates to known limits.

A similar result holds in the probabilistic case when considering convergence to a limit.

\begin{theorem}
\label{thrm:probrocimpram}
   $\Phi: \QQ^+\times\QQ^+ \to \NN$ is a rate of almost sure convergence for a sequence of random variables $\seq{Y_n}$ to a random variable $Y$ iff  $\Phi^M$ defined as $\Phi^M(\varepsilon,\lambda,K) = \Phi(\varepsilon,\lambda)$, for all $\varepsilon,\lambda \in \QQ^+, K:\NN \to \NN$, is a rate of metastable almost sure convergence for $\seq{Y_n}$ to $Y$.
\end{theorem}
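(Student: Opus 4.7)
The plan is to mimic the proof of Theorem \ref{thrm:rocimpram}, modifying the argument to account for the probabilistic setting, in which the quantitative conclusion at each index involves the probability of the event $\{\max_{N \le n \le N+K(N)}\norm{Y_n - Y} \ge \varepsilon\}$ rather than a pointwise inequality. The proof splits into the two directions of the biconditional; both reduce to manipulating the quantifier structure of the (metastable) almost sure convergence statement.

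For the forward direction, I would assume $\Phi$ is a rate of almost sure convergence, so that for every $\varepsilon, \lambda \in \QQ^+$, setting $N := \Phi(\varepsilon, \lambda)$ one has $\mathbb{P}\left(\max_{N \le n \le N+k}\norm{Y_n - Y} \ge \varepsilon\right) < \lambda$ for all $k \in \NN$. Given an arbitrary $K : \NN \to \NN$, I simply take $N := \Phi^M(\varepsilon, \lambda, K) = \Phi(\varepsilon, \lambda)$ and specialise the rate statement to $k = K(N)$; this immediately yields the metastability clause and so verifies that $\Phi^M$ is a rate of metastable almost sure convergence. This direction is essentially routine.

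For the converse, assume $\Phi^M$ of the stated form is a rate of metastable almost sure convergence. Fix $\varepsilon, \lambda \in \QQ^+$ and $k \in \NN$, and put $N_0 := \Phi(\varepsilon, \lambda)$; the goal is to show $\mathbb{P}\left(\max_{N_0 \le n \le N_0 + k}\norm{Y_n - Y} \ge \varepsilon\right) < \lambda$. I would define $K : \NN \to \NN$ by $K(n) := \max(0, N_0 + k - n)$, so that $N + K(N) \ge N_0 + k$ for every $N \le N_0$. Applying the metastability hypothesis to this $K$ supplies an $N \le \Phi^M(\varepsilon, \lambda, K) = N_0$ with $\mathbb{P}\left(\max_{N \le n \le N + K(N)}\norm{Y_n - Y} \ge \varepsilon\right) < \lambda$, and because $[N_0; N_0 + k] \subseteq [N; N + K(N)]$ by construction, the pointwise domination $\max_{N_0 \le n \le N_0 + k}\norm{Y_n - Y} \le \max_{N \le n \le N + K(N)}\norm{Y_n - Y}$ yields the desired bound by monotonicity of $\mathbb{P}$.

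The main obstacle, and the only place the argument differs meaningfully from the deterministic case, is that the metastable witness $N$ produced by $\Phi^M$ may be strictly smaller than $N_0$, so it shifts the interval of indices leftwards and might not automatically reach up to $N_0 + k$. This forces $K$ to depend on $N$ (rather than being constant as in the deterministic proof of Theorem \ref{thrm:rocimpram}) and is chosen precisely so that $N + K(N)$ sits to the right of $N_0 + k$ for every admissible $N$. Once this adjustment is in place, the remainder is a measure-theoretic rephrasing of the deterministic argument.
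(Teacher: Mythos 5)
Your proof is correct and follows essentially the same strategy as the paper's proof: the forward direction is immediate, and the converse is handled by choosing an auxiliary $K$ so that the metastable witness $N\le N_0$ yields an interval containing $[N_0;N_0+k]$, then applying monotonicity of $\mathbb{P}$.

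One small inaccuracy in your commentary: you claim the probabilistic setting \emph{forces} $K$ to depend on $n$, in contrast with the constant $g$ used in the deterministic Theorem \ref{thrm:rocimpram}. This is not so. The paper uses the \emph{constant} function $K_p(n):=N_0+k$ (their $K_p(n)=N+p$ with $N=\Phi(\varepsilon,\lambda)$), and the argument goes through because for any $N\le N_0$ one has $N+K_p(N)=N+N_0+k\ge N_0+k$, so $[N_0;N_0+k]\subseteq[N;N+K_p(N)]$ just as before. Your choice $K(n)=\max(0,N_0+k-n)$ is a tighter version of the same idea (giving $N+K(N)=N_0+k$ exactly rather than an overshoot), and it is perfectly valid, but a constant $K$ also works. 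The actual obstacle you correctly identify — that the witness $N$ may be strictly smaller than $N_0$ so the left endpoint of the interval shifts — is resolved by either choice; the key is only that $K$ be large enough uniformly over the possible $N\le N_0$, which a constant achieves just as well.
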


\begin{proof}
    For the forward direction, let $\varepsilon,\lambda \in \QQ^+$ and $K:\NN \to \NN$ be given. Then taking  $N = \Phi^M(\varepsilon,\lambda,K) = \Phi(\varepsilon,\lambda)$, we have for $k := N+K(N)$ 
    \begin{equation*}
        \mathbb{P}\left(\max_{N \le n\le N+K(N)}\norm{Y_n -  Y} \ge \varepsilon\right) = \mathbb{P}\left(\max_{N \le n\le k}\norm{Y_n -  Y} \ge \varepsilon\right)< \lambda.
    \end{equation*}
    For the converse, let $\varepsilon,\lambda \in \QQ^+$ be given. Let $N := \Phi(\varepsilon,\lambda)$. For each $p \in \NN$, define $K_p:\NN \to \NN$ as, $K_p(n) = N+p$. Since $\Phi^M$ is a rate of metastability, there exists $N_p \le \Phi^M(\varepsilon,\lambda,K_p) = \Phi(\varepsilon,\lambda) = N$ such that 
      \begin{equation*}
        \mathbb{P}\left(\max_{N_p \le n \le N_p+N+p}\norm{Y_n -  Y} \ge \varepsilon\right) < \lambda.
    \end{equation*}
    Thus, we have for all $p \in \NN$,
    \begin{equation*}
        \mathbb{P}\left(\max_{N \le n \le N+p}\norm{Y_n -  Y} \ge \varepsilon\right) \le\mathbb{P}\left(\max_{N_p \le n \le N_p+N+p}\norm{Y_n -  Y} \ge \varepsilon\right)< \lambda.
    \end{equation*}
and we are done.
\end{proof}

\begin{remark}
\text{}
    \label{rem:rocimpram}
 The above shows that if a rate of metastable (almost sure to a limit) convergence is independent of its function part, it can be regarded as a rate of (almost sure to a limit) convergence.
 
 The results we study in this paper demonstrate the (almost sure) convergence of sequences of elements of $\BB$ (random variables taking values in $\BB$) given assumptions about sequences converging. Thus, our quantitative versions of these results will be rates of metastable (almost sure) convergences in terms of rates of metastable (almost sure) convergences coming from our assumptions. All the metastable rates will be uniform in the sense that if we assume we have rates of (almost sure) convergences in the assumptions, the metastable rates of (almost sure) convergence we obtain for the conclusions will be independent of their function part, so can be regarded as rates of convergences.
\end{remark}
%%%%%%%%%%%%%%%%%%%%%%%%%%%%%%%%%%%%%%%%%%%%%
%%%%%%%%%%%%%%%%%%%%%%%%%%%%%%%%%%%%%%%%%%%%%%%%%%%%%%%%%%%%%%%%%%%%%%%%%%%%%%%%%%%%%%%%%%
\section{The computational content of Kronecker's lemma }
\label{sec:Kron}
%%%%%%%%%%%%%%%%%%%%%%%%%%%%%%%%%%%%%%%%%%%%%
This section aims to study the computational content of Kronecker's lemma. It turns out that the direct computational interpretation that one can give to Kronecker's lemma, that is, obtaining rates for the conclusion in terms of rates from the premise, is too weak to be able to get a computational interpretation of the probabilistic Kronecker's lemma. We need a stronger result, which can be seen as a finitary quantitative formulation of Kronecker's lemma similar to results in \cite{powell2023finitization,powell2020note}. This result will also tell us information about what error in the premise is required to produce the error we want in the conclusion.
%%%%%%%%%%%%%%%%%%%%%%%%%%%%%%%%%%%%%%%%%%%%%%%%%%%%%%%%%%%%%%%%%%%%%%%%%%%%%%%%%%%%%%%%%%
\subsection{A quantitative version of Kronecker's lemma }
\label{subsec:quantkron}
%%%%%%%%%%%%%%%%%%%%%%%%%%%%%%%%%%%%%%%%%%%%%
We start by giving a proof of Kronecker's lemma for sequences of elements in $\BB$. The proof we present is a fleshed-out version of the proof of Theorem $A.6.2$ in \cite{gut2005probability}.

\begin{theorem}[Kronecker's lemma on $\BB$]
\label{thrm:banachkron}
Let $\seq{x_n}$ be a sequence of elements in $\BB$ and $0< a_0 \le a_1 \le \cdots$ be such that $a_n \to \infty$. 
If $\{\sum_{i=0}^n x_i\}$ is Cauchy, then 
    \begin{equation*}
        \frac{1}{a_n}\sum_{i=0}^n a_ix_i \to 0
    \end{equation*}
    as $n \to \infty$. 
\end{theorem}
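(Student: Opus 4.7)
The plan is to reduce the claim to the boundedness of $\seq{s_n}$, where $s_n := \sum_{i=0}^n x_i$ (which is bounded since it is Cauchy), via an Abel summation-by-parts identity. Setting $s_{-1} := 0$ so that $x_i = s_i - s_{i-1}$, a direct computation gives
\begin{equation*}
\sum_{i=0}^n a_i x_i = a_n s_n - \sum_{i=0}^{n-1}(a_{i+1} - a_i)\, s_i,
\end{equation*}
and since $\sum_{i=0}^{n-1}(a_{i+1} - a_i) = a_n - a_0$, dividing by $a_n$ and rearranging yields
\begin{equation*}
\frac{1}{a_n}\sum_{i=0}^n a_i x_i \;=\; \frac{a_0}{a_n}\, s_n \;+\; \frac{1}{a_n}\sum_{i=0}^{n-1}(a_{i+1} - a_i)(s_n - s_i).
\end{equation*}
This identity is the core of the argument: I would show each summand on the right tends to $0$.

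For the first summand, since $\seq{s_n}$ is Cauchy it is bounded by some $M \ge 0$, so $\norm{a_0 s_n / a_n} \le a_0 M / a_n \to 0$ as $a_n \to \infty$. For the second, fix $\varepsilon > 0$ and use the Cauchy condition to choose $N$ with $\norm{s_n - s_i} < \varepsilon/2$ whenever $n,i \ge N$; then split the sum at $N$. The tail telescopes:
\begin{equation*}
\frac{1}{a_n}\sum_{i=N}^{n-1}(a_{i+1} - a_i)\norm{s_n - s_i} \;\le\; \frac{a_n - a_N}{a_n}\cdot \frac{\varepsilon}{2} \;<\; \frac{\varepsilon}{2},
\end{equation*}
while the head is bounded by $2M(a_N - a_0)/a_n$, whose fixed numerator and growing denominator send it to $0$. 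Combining and taking $n$ large enough that both vanishing contributions are below $\varepsilon/2$ completes the argument.

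A point worth emphasising is the deliberate use of the Cauchy difference $s_n - s_i$ rather than $s_i - s$ for a hypothetical limit $s$: since $\BB$ is only assumed normed, not complete, $\seq{s_n}$ need not converge, so phrasing the estimate in terms of Cauchy differences sidesteps completeness entirely. I do not anticipate any genuine obstacle here; the proof is a routine Banach-valued transcription of the classical real argument, with the summation-by-parts identity carrying essentially all the work. The one thing to set up carefully will be the quantitative bookkeeping in the next subsection, where the two vanishing terms must be turned into explicit metastable rates in terms of a Cauchy-rate for $\seq{s_n}$ and a rate at which $a_n \to \infty$.
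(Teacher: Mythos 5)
Your proof is correct and takes essentially the same approach as the paper: Abel summation by parts, then splitting the resulting sum at a Cauchy cutoff, with boundedness of the partial sums controlling the initial segment and $a_n \to \infty$ driving the residual terms to zero. The only cosmetic difference is that you centre the Cauchy differences at the running index $n$ (bounding $\norm{s_n - s_i}$ and absorbing $s_n$ into the sum via the telescoping identity $\sum_{i=0}^{n-1}(a_{i+1}-a_i)=a_n-a_0$), whereas the paper centres them at the fixed cutoff $M$ (bounding $\norm{s_i - s_M}$ and keeping a separate $s_n - (1-a_M/a_n)s_M$ term), but this has no material effect on the argument or the resulting quantitative bookkeeping.
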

\begin{proof}
Let $\varepsilon > 0$ be given. Define $s_n = \sum_{i=0}^n x_i$, by our hypothesis, $s_n$ is Cauchy. Take $M \in \NN$ such that 
\begin{equation*}
\norm{s_n - s_M} < \frac{\varepsilon}{4}
\end{equation*}
for all $n \ge M$.
 We first observe, by summation by parts, that for all  $n \ge M$,
\begin{equation*}
\bignorm{\frac{1}{a_n}\sum_{i=0}^n a_ix_i}  = \bignorm{s_n - \frac{1}{a_n}\sum_{i=0}^{n-1} (a_{i+1}-a_i)s_i} 
\end{equation*}
the right-hand side of the above becomes,
\begin{equation*}
     \begin{aligned}
           &\bignorm{s_n - \frac{1}{a_n}\sum_{i=0}^{M-1} (a_{i+1}-a_i)s_i - \frac{1}{a_n}\sum_{i=M}^{n-1} (a_{i+1}-a_i)s_M - \frac{1}{a_n}\sum_{i=M}^{n-1} (a_{i+1}-a_i)(s_i-s_M)} \\
           &\le \bignorm{s_n -(1-\frac{a_M}{a_n})s_M} + \bignorm{\frac{1}{a_n}\sum_{i=0}^{M-1} (a_{i+1}-a_i)s_i} + \bignorm{\frac{1}{a_n}\sum_{i=M}^{n-1} (a_{i+1}-a_i)(s_i-s_M)} \\
           &\le \norm{s_n - s_M} +\bignorm{\frac{a_Ms_M}{a_n}}+ \bignorm{\frac{1}{a_n}\sum_{i=0}^{M-1} (a_{i+1}-a_i)s_i} + \frac{1}{a_n}\sum_{i=M}^{n-1} (a_{i+1}-a_i)\norm{s_i-s_M} \\
           & <\frac{\varepsilon}{4} +\bignorm{\frac{a_Ms_M}{a_n}}+ \bignorm{\frac{1}{a_n}\sum_{i=0}^{M-1} (a_{i+1}-a_i)s_i} + \frac{\varepsilon}{4}\frac{1}{a_n}\sum_{i=M}^{n-1} (a_{i+1}-a_i)\\
           &\le\frac{\varepsilon}{2} +\bignorm{\frac{a_Ms_M}{a_n}}+ \bignorm{\frac{1}{a_n}\sum_{i=0}^{M-1} (a_{i+1}-a_i)s_i}.
     \end{aligned}
\end{equation*}
Now since $M$ is fixed and  $\seq{a_n}$ is an increasing sequence that tends to infinity, we can take $n$ large enough to ensure $\norm{\frac{a_Ms_M}{a_n}}$ and $\norm{\frac{1}{a_n}\sum_{i=0}^{M-1} (a_{i+1}-a_i)s_i}$ are both $< \frac{\varepsilon}{4}$.
\end{proof}

We now prove our finitary quantitative Kronecker's lemma.  
\begin{theorem}[Finitary Kronecker's lemma ]
    \label{thrm:quantkron}
Let $\seq{x_n}$ be a sequence of elements in $\BB$ and $0< a_0 \le a_1 \le \cdots$. For each $n \in \NN$ and $x \ge 0$, define, $s_n := \sum_{i=0}^n x_i$ and $f_{\seq{a_n}}(x) := \min\{n \in \NN : a_n \ge x\}$. 

     Now for every function $\gamma:\QQ^+ \to \NN$, sequence of natural numbers $\seq{z_n}$, $\varepsilon \in \QQ^+$ and  $w \in \NN$, if $M = \gamma(\frac{\varepsilon}{4})$ satisfies,
     \begin{equation}
     \label{eqn:con:maj}
         \forall i \le M \, (z_M \ge \norm{s_i})
     \end{equation}
    and
    \begin{equation}
    \label{eqn:kron:proof:1}
        \norm{s_n - s_M} < \frac{\varepsilon}{4}
    \end{equation}
    for all $n \in [M,w]$, then  $N = \Gamma_{\seq{a_n}}(\gamma, \seq{z_n},\varepsilon)$ satisfies,
    \begin{equation*}
        \bignorm{\frac{1}{a_n}\sum_{i=0}^n a_ix_i} < \varepsilon
    \end{equation*}
    for all $n \in [N, w]$. Where 
    $$\Gamma_{\seq{a_n}}(\gamma, \seq{z_n},\varepsilon) := \max\left\{\gamma\left(\frac{\varepsilon}{4}\right),f_{\seq{a_n}}\left(\frac{4a_{\gamma(\frac{\varepsilon}{4})}z_{\gamma(\frac{\varepsilon}{4})}}{\varepsilon}\right)\right\}.$$
\end{theorem}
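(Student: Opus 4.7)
The plan is to quantify the proof of Theorem~\ref{thrm:banachkron} essentially verbatim, with the assumed witness $M = \gamma(\varepsilon/4)$ taking the place of the Cauchy witness extracted from $\seq{s_n}$ there. The role of hypothesis~(\ref{eqn:con:maj})---that $z_M$ majorizes $\norm{s_i}$ for every $i \le M$---is to turn the qualitative step ``$M$ is fixed, take $n$ large enough'' at the end of that proof into an explicit lower bound on $n$. Without such uniform majorizing data, the threshold would depend non-uniformly on $\seq{x_n}$ and would be useless for the probabilistic applications in the next section.

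Concretely, I would first apply summation by parts together with hypothesis~(\ref{eqn:kron:proof:1}) exactly as in the proof of Theorem~\ref{thrm:banachkron} to obtain, for every $n \in [M, w]$,
\begin{equation*}
\bignorm{\frac{1}{a_n}\sum_{i=0}^n a_i x_i} < \frac{\varepsilon}{2} + \bignorm{\frac{a_M s_M}{a_n}} + \bignorm{\frac{1}{a_n}\sum_{i=0}^{M-1}(a_{i+1}-a_i)s_i},
\end{equation*}
where the strict $\varepsilon/2$ absorbs both the tail $\norm{s_n - s_M} < \varepsilon/4$ and the weighted tail $\frac{1}{a_n}\sum_{i=M}^{n-1}(a_{i+1}-a_i)\norm{s_i - s_M}$, using $\sum_{i=M}^{n-1}(a_{i+1}-a_i) \le a_n$. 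Then I would apply~(\ref{eqn:con:maj}) to bound the second summand by $a_M z_M / a_n$ directly, and the third summand by the same quantity via the telescoping identity $\sum_{i=0}^{M-1}(a_{i+1}-a_i) = a_M - a_0 \le a_M$. Combining gives the uniform estimate $\bignorm{\frac{1}{a_n}\sum_{i=0}^n a_i x_i} < \varepsilon/2 + 2 a_M z_M / a_n$.

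It then suffices to force $a_n \ge 4 a_M z_M / \varepsilon$, which by the very definition of $f_{\seq{a_n}}$ holds as soon as $n \ge f_{\seq{a_n}}(4 a_M z_M / \varepsilon)$. Taking $N$ to be the maximum of this threshold and $M$---the latter being needed only so that the summation-by-parts decomposition is in force---recovers exactly the value $\Gamma_{\seq{a_n}}(\gamma, \seq{z_n}, \varepsilon)$ displayed in the statement. The argument poses no genuine obstacle; the only delicate point is tracking strictness, since~(\ref{eqn:kron:proof:1}) is a strict inequality whereas $f_{\seq{a_n}}$ returns a non-strict bound $a_n \ge 4 a_M z_M / \varepsilon$. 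The final conclusion nevertheless remains strictly below $\varepsilon$ because the strict half $< \varepsilon/2$ is added to a non-strict $\le \varepsilon/2$.
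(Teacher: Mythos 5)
Your proposal is correct and follows essentially the same route as the paper's own proof: it reuses the summation-by-parts decomposition from Theorem~\ref{thrm:banachkron}, bounds the two tail terms by $\varepsilon/2$ via hypothesis~(\ref{eqn:kron:proof:1}), bounds the two $M$-dependent terms each by $a_M z_M / a_n$ via~(\ref{eqn:con:maj}) and the telescoping identity, and then forces $a_n \ge 4 a_M z_M / \varepsilon$ through $f_{\seq{a_n}}$. The observation about strictness is also sound.
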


\begin{proof}
 We have for each $n \in [N, w]$, 
\begin{equation*}
    \begin{aligned}
        \bignorm{\frac{1}{a_n}\sum_{i=0}^n a_ix_i}&\le \norm{s_n - s_M} +\bignorm{\frac{a_Ms_M}{a_n}}+ \bignorm{\frac{1}{a_n}\sum_{i=0}^{M-1} (a_{i+1}-a_i)s_i} + \frac{1}{a_n}\sum_{i=M}^{n-1} (a_{i+1}-a_i)\norm{s_i-s_M}\\
       &< \frac{\varepsilon}{4} +\bignorm{\frac{a_Ms_M}{a_n}}+ \bignorm{\frac{1}{a_n}\sum_{i=0}^{M-1} (a_{i+1}-a_i)s_i}+\frac{\varepsilon}{4}\frac{1}{a_n}\sum_{i=M}^{n-1} (a_{i+1}-a_i)\\
        &\le \frac{\varepsilon}{2} + \frac{a_Mz_M}{a_n} + \frac{a_Mz_M}{a_n} \le \varepsilon.
    \end{aligned}
\end{equation*}
The first line follows from precisely the first three lines in the calculation in the proof of Theorem \ref{thrm:banachkron}. To get the second line, we use (\ref{eqn:kron:proof:1}) to bound the first term and the fact that $[M, n-1] \subseteq [N,w]$ (since $M\ge N$ and $n \le w$) and (\ref{eqn:kron:proof:1}) allows us to bound the last term. The third line follows from the bounding condition of $\seq{z_n}$ on $\seq{s_n}$ and simplification.
\end{proof}
We can now obtain a quantitative version of Kronecker's lemma, which translates rates from the premise to rates for the conclusion.
\begin{corollary}
\label{cor:quantkron}
   Let $\seq{x_n}$, $\seq{a_n}$, $\seq{s_n}$, $f_{\seq{a_n}}$ be as in Theorem \ref{thrm:quantkron} and let $\seq{z_n}$ be a sequence of nondecreasing natural numbers satisfying $z_n \ge \norm{s_n}$ for all $n$. 

 Suppose  $\sum_{i = 1}^n x_i$ is Cauchy with rate of metastability $\Phi$. Then 
 $$\frac{1}{a_n}\sum_{i=0}^n a_ix_i$$ converges to $0$ with rate of metastability
    \begin{equation*}
        \kappa_{\Phi,\seq{a_n},\seq{z_n}}(\varepsilon,g) = \max\left\{Q,f_{\seq{a_n}}\left(\frac{4a_Qz_Q}{\varepsilon}\right)\right\}
    \end{equation*}
    Where, $Q := \Phi(\frac{\varepsilon}{4},h_{\varepsilon,g,\seq{a_n},\seq{z_n}})$ and 
    \begin{equation*}
            h_{\varepsilon, g,\seq{a_n},\seq{z_n}}(n) = \tilde{g}\left(\max\left\{n,f_{\seq{a_n}}\left(\frac{4a_nz_n}{\varepsilon}\right)\right\}\right)\\
    \end{equation*}
    with $\tilde{g}(n)= n+g(n)$.
\end{corollary}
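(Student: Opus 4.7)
The plan is to apply the finitary Kronecker's lemma (Theorem \ref{thrm:quantkron}) with the witness $M$ supplied by the metastability rate $\Phi$ for the partial sums $\seq{s_n}$. Given $\varepsilon \in \QQ^+$ and $g : \NN \to \NN$, I would invoke $\Phi$ at the arguments $\varepsilon/4$ and the auxiliary function $h := h_{\varepsilon, g, \seq{a_n}, \seq{z_n}}$ defined in the statement, obtaining some $M \le Q := \Phi(\varepsilon/4, h)$ for which $\norm{s_n - s_M} < \varepsilon/4$ throughout $n \in [M, M + h(M)]$.

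The main conceptual obstacle is lining up the dependencies correctly: the threshold $N$ produced by Theorem \ref{thrm:quantkron} depends on the value $M$ returned by the metastability rate, but the Cauchy property of $\seq{s_n}$ must hold throughout $[M, w]$, where ultimately $w$ will be $N + g(N)$ — itself a function of $N$ and hence of $M$. The definition $h(M) := \tilde g(\max\{M, f_{\seq{a_n}}(4 a_M z_M / \varepsilon)\})$ is engineered to resolve precisely this circularity: setting $N := \max\{M, f_{\seq{a_n}}(4 a_M z_M / \varepsilon)\}$, one reads off that $h(M) = \tilde g(N) = N + g(N)$, so $M + h(M) \ge N + g(N)$. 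Taking $w := N + g(N)$, the hypotheses of Theorem \ref{thrm:quantkron} are all in place: the bound $z_M \ge \norm{s_i}$ for $i \le M$ is immediate from the monotonicity of $\seq{z_n}$ together with $z_i \ge \norm{s_i}$, and the required Cauchy condition on $[M, w]$ is a direct consequence of the metastability just obtained.

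It remains to check that $N \le \kappa_{\Phi, \seq{a_n}, \seq{z_n}}(\varepsilon, g)$, which is where the monotonicity of $\seq{a_n}$ and $\seq{z_n}$ enters. Since $M \le Q$ we have $a_M \le a_Q$ and $z_M \le z_Q$, whence $f_{\seq{a_n}}(4 a_M z_M / \varepsilon) \le f_{\seq{a_n}}(4 a_Q z_Q / \varepsilon)$ (noting that $f_{\seq{a_n}}$ is itself nondecreasing), so both summands appearing in $N$ are bounded by the corresponding summands defining $\kappa$. The conclusion of Theorem \ref{thrm:quantkron} then exhibits $N \le \kappa(\varepsilon, g)$ for which $\norm{\frac{1}{a_n} \sum_{i=0}^n a_i x_i} < \varepsilon$ throughout $n \in [N, N + g(N)]$, establishing the claimed metastable rate.
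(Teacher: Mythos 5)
Your proof is correct and takes essentially the same route as the paper: instantiate $\Phi$ at $(\varepsilon/4, h)$ to obtain $M \le Q$, apply Theorem \ref{thrm:quantkron} with the constant function $\gamma \equiv M$ and $w = N + g(N) = h(M)$, and use monotonicity of $\seq{a_n}$, $\seq{z_n}$ and $f_{\seq{a_n}}$ to conclude $N \le \kappa_{\Phi,\seq{a_n},\seq{z_n}}(\varepsilon,g)$. The only cosmetic difference is in phrasing the containment step: the paper notes $[M, h(M)] \subseteq [M, M+h(M)]$ directly, while you observe $N + g(N) \le M + h(M)$, which is the same fact.
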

\begin{proof}
Let $\varepsilon > 0, g:\NN \to \NN$ be given. By definition $\exists M \le Q = \Phi(\frac{\varepsilon}{4}, h_{\varepsilon,g,\seq{a_n},\seq{z_n}})$ such that
     \begin{equation*}
        |s_n - s_M| < \frac{\varepsilon}{4}
    \end{equation*}
    for all $n \in [M,h_{\varepsilon,g,\seq{a_n},\seq{z_n}}(M)] \subseteq [M,M+ h_{\varepsilon,g,\seq{a_n},\seq{z_n}}(M)]$. Now letting
    $\gamma(\sigma) = M$, for all $\sigma \in \QQ^+$ gives, by Theorem \ref{thrm:quantkron} ((\ref{eqn:con:maj}) follows since $\seq{z_n}$ is nondecreasing abound bounds $\seq{\norm{s_n}}$ respectively), 
    $$N = \Gamma_{\seq{a_n}}(\gamma, \seq{z_n},\varepsilon) = \max\left\{M,f_{\seq{a_n}}\left(\frac{4a_Mz_M}{\varepsilon}\right)\right\} \le \max\left\{Q,f_{\seq{a_n}}\left(\frac{4a_Qz_Q}{\varepsilon}\right)\right\},$$ 
    (the last inequality follows since $M \le Q$ and $f_{\seq{a_n}}, \seq{a_n},\seq{z_n}$ are all non-decreasing) and $w =  h_{\varepsilon,g,\seq{a_n},\seq{z_n}}(M) = N+ g(N)$ satisfies   
        \begin{equation*}
        \bignorm{\frac{1}{a_n}\sum_{i=0}^n a_ix_i} < \varepsilon
    \end{equation*}
    for all $n \in [N, N+g(N)]$, so we are done.  
\end{proof}
\begin{remark}
In light of Remark \ref{rem:rocimpram}, if $\Phi$ above is a rate of convergence, then we get a rate of convergence to zero given by the above expression, but with 
\begin{equation*}
    Q= \Phi(\varepsilon/4).
\end{equation*}
\end{remark}
\begin{remark}
    In both Theorem \ref{thrm:quantkron} and Corollary \ref{cor:quantkron} we can replace $f_{\seq{a_n}}$ by any nondecreasing function $f^*$ bounding $f_{\seq{a_n}}$, that is, satisfying $f^*(x)\ge f_{\seq{a_n}}(x)$ for all $x\ge 0$.
\end{remark}

%%%%%%%%%%%%%%%%%%%%%%%%%%%%%%%%%%%%%%%%%%%%%%%%%%%%%%%%%%%%%%%%%%%%%%%%%%%%%%%%%%%%%%%%%%
\subsection{Computability of rates and the Reverse Mathematics of Kronecker's lemma}
\label{subsec:kroncomp}
%%%%%%%%%%%%%%%%%%%%%%%%%%%%%%%%%%%%%%%%%%%%%%%%%%%%%%%%%%%%%%%%%%%%%%%%%%%%%%%%%%%%%%%%%%
We mentioned in Section \ref{subsec:quantconv} that there exist sequences of converging rational numbers that do not converge with a computable rate of convergence. The most famous of this construction is due to Specker in \cite{specker:49:sequence}, where he constructed a bounded monotone sequence of rational numbers that converge without a computable rate of Cauchy convergence, thus demonstrating that general rates of convergences cannot be extracted from any proof of the monotone convergence principle. A modification of Specker's construction yields a similar result to Kronecker's lemma. 
\begin{example}
\label{thrm:kroncomp}
    We can construct a sequence of rational numbers $\seq{x_n}$ such that $\sum_{i=0}^\infty x_i$ converges, but $\frac{1}{n+1}\sum_{i=0}^n (i+1)x_i$ converges to zero, without a computable rate of convergence.
    
    Let A be a recursively enumerable set that is not recursive (e.g. the halting set). Let $\seq{a_n}$ be a recursive enumeration of the elements in A. Let $x_i = 2^{-a_i}$. So $\seq{x_n}$ is a positive sequence of rational numbers and we have, 
    \begin{equation*}
        \sum_{i=0}^\infty x_i \le \sum_{i=0}^\infty 2^{-i} = 1
    \end{equation*}
    Now suppose, for contradiction, $\frac{1}{n}\sum_{i=0}^n ix_i$ converges to 0 with a computable rate of convergence $\phi$. We describe an effective procedure to determine whether $k \in A$, for all $k \in \NN$. Suppose, $k = a_n$ with $n > \phi(2^{-k})$, then 
    \begin{equation*}
        \frac{1}{n+1}\sum_{i=0}^n (i+1)x_i \le 2^{-k}
    \end{equation*}
    which implies, 
    \begin{equation*}
        \sum_{i=0}^n (i+1)x_i \le (n+1)2^{-a_n}.
    \end{equation*}
     This is clearly a contradiction, as $n > 1$ and $\seq{x_n}$ is positive. Thus, if $k = a_n$ then $n \le \phi(2^{-k})$. So we can determine whether $k \in A$, by computably searching the first $\phi(2^{-k})$ terms in $\seq{a_n}$. 
\end{example}
The above construction demonstrates that a general rate of convergence for Kronecker's lemma must depend on a rate of convergence for $\sum_{i=0}^\infty x_i$. In fact, more is true.

Following \cite{simpson2009subsystems}, let $\mathrm{RCA_0}$ be the standard base system of Reverse 
mathematics (the subsystem of second order arithmetic containing only $\Sigma_1^0$ induction and $\Delta_1^0$ comprehension) and the system $\mathrm{ACA_0}$ which extends $\mathrm{RCA_0}$ by arithmetic comprehension \footnote{Arithmetic comprehension is the scheme \[
\exists X\, \forall n\, (n \in X \leftrightarrow \phi(n))
\]
for all arithmetic formulas $\phi$ (formulas with no bound set variables) without $X$ occurring as a free variable.}.

Here, we shall be working with the language of second-order arithmetic, where we quantify over variables representing natural numbers and subsets of natural numbers. Furthermore, via the numerically defined paring function 
\[
(m,n):= (m+n)^2 + m
\]
we can encode the integers, rationals and reals. In addition, as standard, for set variables  $X, Y, f \in 2^{\NN}$, $f: X \to Y$ is shorthand for 
\[
\forall l, n, m \in \NN \,((l,n) \in f \land (l,m) \in f \to n = m)
\]
and for a formula $\phi(f)$ in the language of second order arithmetic, 
\[
\forall f:X \to Y \,( \phi(f))
\]
is shorthand for 
\[
\forall f \in 2^\NN \,((f:X \to Y) \to \phi(f))
\]
with a similar convention for 
\[
\exists f:X \to Y \, (\phi(f)).
\]
Moreover, we  write $f(n)=m$ for $(n,m) \in f$.

In what follows, we need the standard fact that $\mathrm{ACA}_0$ proves the axiom of choice on arithmetic formulas. More specifically.

\begin{proposition}
    The axiom of choice for arithmetic formulas is provable in $\mathrm{ACA}_0$. That is, the following holds:
\[
\forall n \in \NN \,\exists m \in \NN \,\phi(n,m) \to \exists f : \NN \to \NN \, \forall n \, \phi(n,f(n))
\]
for all arithmetic formulas $\phi$, without $f$ occurring free. Here $\phi(n,f(n))$ is shorthand for $\exists m\, (f(n) = m \land \phi(n,m)))$.
\end{proposition}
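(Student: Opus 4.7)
The plan is to define the desired function by taking, for each $n$, the least witness $m$ such that $\phi(n,m)$ holds, and then show that the graph of this function is definable by an arithmetic formula and hence exists by arithmetic comprehension.

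More concretely, I would first assume the hypothesis $\forall n \in \NN\, \exists m \in \NN\, \phi(n,m)$ for a fixed arithmetic formula $\phi$ (without $f$ free). Then I would consider the formula
\[
\psi(n,m) \;:=\; \phi(n,m) \,\land\, \forall m' < m\, \lnot \phi(n,m').
\]
Since $\phi$ is arithmetic and the added conjunct uses only a bounded quantifier, $\psi$ is again arithmetic. Thus by the arithmetic comprehension scheme available in $\mathrm{ACA}_0$, there exists a set
\[
F = \{ (n,m) \in \NN \times \NN \,:\, \psi(n,m) \}.
\]

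It then remains to verify that $F$ codes a function $\NN \to \NN$ satisfying $\phi(n, F(n))$ for all $n$. Uniqueness of the second coordinate is immediate from the least-witness clause in $\psi$: if $(n,m), (n,m') \in F$ with $m < m'$, then $\lnot \phi(n,m)$ by the definition of $\psi$ applied to $(n,m')$, contradicting $\phi(n,m)$. Totality follows from the hypothesis together with the least-number principle, which is provable in $\mathrm{RCA}_0$ from $\Sigma^0_1$ induction (and a fortiori in $\mathrm{ACA}_0$ for arithmetic formulas): given $n$, the hypothesis provides some $m$ with $\phi(n,m)$, and the least such $m$ then witnesses $\psi(n,m)$, so $(n,m) \in F$. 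Finally, if $F(n) = m$, meaning $(n,m) \in F$, then in particular $\phi(n,m)$ holds, i.e., $\phi(n,F(n))$.

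The only subtlety is ensuring the least-number principle is available for arithmetic predicates; this is standard in $\mathrm{ACA}_0$ since arithmetic comprehension implies the scheme of induction for arithmetic formulas, from which the existence of a least witness in any nonempty arithmetically definable subset of $\NN$ follows. With that, the proof is straightforward and I would not expect any genuine obstacle beyond being careful about what comprehension and induction are actually assumed in the base theory.
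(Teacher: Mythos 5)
Your proposal is correct and takes essentially the same route as the paper's proof: both apply arithmetic comprehension to the arithmetic formula expressing that $m$ is the least witness for $\phi(n,m)$ (you write it as $\phi(n,m)\land\forall m'<m\,\lnot\phi(n,m')$, the paper as $\phi(n,m)\land\forall m_0(\phi(n,m_0)\to m\le m_0)$, which are equivalent), and then verify the resulting set is the graph of a total function. You simply spell out the verification — uniqueness from minimality, totality via the least-number principle for arithmetic formulas, which is available in $\mathrm{ACA}_0$ — where the paper leaves it as ``one can easily show''.
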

\begin{proof}
    By arithmetic comprehension \footnote{A direct application of  Arithmetic comprehension yields 
 \[
\exists f \in 2^{\NN}\, \forall n \in \NN \,(n \in f  \leftrightarrow (\exists i,j \in \NN\,(n = (i,j) \land (\phi(i,j) \land( \forall j_0 \in \NN \,(\phi(i,j_0) \to j \le j_0))))))
\]
 and our stated formula follows.}, we have 
\[
\exists f \in 2^{\NN}\, \forall n,m \in \NN \,((n,m) \in f \leftrightarrow (\phi(n,m) \land( \forall m_0 \in \NN \,(\phi(n,m_0) \to m \le m_0)))).
\]
Taking such an $f$, one can easily show $f: \NN \to \NN$ and satisfies the consequence of the implication in the statement of the result.
\end{proof}
Now, for sequences of real numbers $\seq{x_n}, \seq{a_n}$ let 
\begin{equation*}
\begin{aligned}
\mathrm{KRON}(\seq{a_n},\seq{x_n})&:\equiv \forall n\in \NN\,(0<a_n \le a_{n+1}) \land \forall m\in \NN\, \exists k\in \NN\, (a_k \ge m)\land \left(\left\{\sum_{i=0}^nx_n\right\}\mbox{ is Cauchy}\right)\\
&\to\frac{1}{a_n}\sum_{i=0}^n a_ix_i \mbox{ converges to $0$} 
\end{aligned}
\end{equation*}
and
\begin{equation*}
\begin{aligned}
\mathrm{RKRON}(\seq{a_n},\seq{x_n})&:\equiv \forall n\in \NN\,(0<a_n \le a_{n+1}) \land \forall m\in \NN\, \exists k\in \NN\, (a_k \ge m)\land \left(\left\{\sum_{i=0}^nx_n\right\}\mbox{ is Cauchy}\right)\\
&\to \exists g:\NN\to \NN \, \left(\mbox{$g$ is a rate of convergence to $0$ for }\frac{1}{a_n}\sum_{i=0}^n a_ix_i \right).
\end{aligned}
\end{equation*}
Here we use the ` $2^{-k}$' formulation of convergence, for example, by ` $g$ is a rate of convergence to $0$ for $\frac{1}{a_n}\sum_{i=0}^n a_ix_i$' we mean 
\begin{equation*}
    \forall k \in \NN\, \forall n \in \NN \left(n \ge g(k) \to \left| \frac{1}{a_n}\sum_{i=0}^n a_ix_i\right|\le 2^{-k} \right).
\end{equation*}
We have the following:
\begin{theorem}
\label{thrm:Kronreverse}
In $\mathrm{RCA_0}$ we have the following:
    \begin{itemize}
        \item[(i)] For all sequences of reals $\seq{x_n}$ and $\seq{a_n}$, $\mathrm{KRON}(\seq{a_n},\seq{x_n})$.
        \item [(ii)] $\mathrm{ACA_0}$ implies, for all sequences of reals $\seq{x_n}$ and $\seq{a_n}$, $\mathrm{RKRON}(\seq{a_n},\seq{x_n})$.
        \item[(iii)] For all sequences of positive rationals $\seq{x_n}$ $(\mathrm{RKRON}(\seq{n+1},\seq{x_n}))$ implies $\mathrm{ACA_0}$.  
    \end{itemize}
\end{theorem}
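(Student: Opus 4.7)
The plan is to handle the three parts in turn, with the substance going into (iii). For (i), I would formalise the proof of Theorem \ref{thrm:banachkron}, specialised to $\RR$, inside $\mathrm{RCA_0}$. The argument is purely arithmetic: given $k \in \NN$, use the Cauchy hypothesis to choose $M$ with $|s_n - s_M| \le 2^{-(k+2)}$ for $n \ge M$, and then use $\forall m\,\exists j\,(a_j \ge m)$ to pick $N \ge M$ making both $|a_M s_M/a_N|$ and $|(1/a_N)\sum_{i < M}(a_{i+1}-a_i)s_i|$ at most $2^{-(k+2)}$. The summation-by-parts estimate in the proof of Theorem \ref{thrm:banachkron} then yields $|(1/a_n)\sum a_i x_i| \le 2^{-k}$ for $n \ge N$; the overall statement is $\Pi^0_3$ and the argument uses only $\Sigma^0_1$-induction and $\Delta^0_1$-comprehension. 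For (ii), I would combine (i) with the arithmetic choice scheme available in $\mathrm{ACA_0}$ via the Proposition stated above: the conclusion of $\mathrm{KRON}$ has the form $\forall k\,\exists N\,\varphi(k,N)$ with $\varphi(k,N) \equiv \forall n \ge N\,(|(1/a_n)\sum a_i x_i| \le 2^{-k})$ arithmetic (indeed $\Pi^0_1$), so arithmetic choice delivers the desired rate $g$.

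The main content is (iii), which I plan as a Reverse-Mathematics rendering of Example \ref{thrm:kroncomp}. I will use the standard equivalence, over $\mathrm{RCA_0}$, between $\mathrm{ACA_0}$ and the principle ``the range of every injection $f:\NN \to \NN$ exists as a set'' (see \cite{simpson2009subsystems}). It thus suffices, from the hypothesis ``$\mathrm{RKRON}(\seq{n+1},\seq{x_n})$ for every positive rational $\seq{x_n}$'', to produce $\mathrm{range}(f)$ as a set. Set $x_n := 2^{-f(n)}$; granted the Cauchy hypothesis for $\{\sum_{i=0}^n x_i\}$ (see the next paragraph), applying $\mathrm{RKRON}$ yields a rate of convergence $g:\NN \to \NN$ for $\frac{1}{n+1}\sum_{i=0}^n (i+1)x_i \to 0$. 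Following Example \ref{thrm:kroncomp}: if $f(n) = k$ and $n \ge g(k+1)$, then, since all terms are nonnegative,
\[
\frac{1}{n+1}\sum_{i=0}^n (i+1)x_i \;\ge\; x_n \;=\; 2^{-k} \;>\; 2^{-(k+1)},
\]
contradicting the defining property of $g$ at level $k+1$. Hence $f(n) = k$ forces $n < g(k+1)$, giving the bounded equivalence $k \in \mathrm{range}(f) \iff \exists i < g(k+1)\,(f(i) = k)$; since this is $\Delta^0_0$ in $(f,g)$, the set $\mathrm{range}(f)$ exists by $\Delta^0_1$-comprehension, completing the reversal.

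The step I expect to be the main obstacle in (iii) is the $\mathrm{RCA_0}$-verification that $\{\sum_{i=0}^n 2^{-f(i)}\}$ is Cauchy, as this hypothesis is needed to invoke $\mathrm{RKRON}$. My plan is to use finite pigeonhole together with $\Sigma^0_1$-induction. Fix $k$: by injectivity, there cannot exist $k+2$ indices $i_0 < \cdots < i_{k+1}$ with $f(i_j) \le k$, since their $f$-values would be $k+2$ distinct elements of $\{0,\dots,k\}$. Hence the assumption $\forall N\,\exists i > N\,(f(i) \le k)$ would allow one, by iterated unbounded search, to define precisely such $i_0, \dots, i_{k+1}$, contradicting finite pigeonhole (provable in $\mathrm{RCA_0}$). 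Consequently $\exists N\,\forall i > N\,(f(i) > k)$, and then for any $m \ge n > N$ the tail satisfies $\sum_{i=n+1}^m 2^{-f(i)} \le \sum_{j > k} 2^{-j} = 2^{-k}$, giving the desired $\Pi^0_3$ Cauchy conclusion. All remaining components of (iii) then reduce to bounded arithmetic, so once this step is in place the reversal goes through cleanly.
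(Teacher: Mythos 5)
Your proposal is correct and tracks the paper's own proof closely: (i) is a formalisation of the proof of Theorem \ref{thrm:banachkron} over $\mathrm{RCA_0}$; (ii) is (i) plus arithmetic choice; (iii) is the reversal via the range of an injection using $x_n = 2^{-f(n)}$ and the bounded equivalence for membership in $\mathrm{range}(f)$. The one place you genuinely diverge is the $\mathrm{RCA_0}$-verification that $\{\sum_{i\le n} 2^{-f(i)}\}$ is Cauchy: the paper simply cites that $\mathrm{RCA_0}$ proves bounded monotone sequences are Cauchy (in the modulus-free, $\Pi^0_3$ sense), whereas you give a direct finite-pigeonhole argument showing that for each $k$ eventually $f(i) > k$, so by injectivity the tail is dominated by $\sum_{j>k} 2^{-j}$. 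Both arguments ultimately rest on the same $\Sigma^0_1$-induction (your ``iterated unbounded search'' is exactly the induction showing one can extract $k+2$ indices with $f$-value $\le k$; the paper's general lemma similarly shows one can extract $j$ jumps of size $2^{-k}$), so they have the same proof-theoretic cost; your version is more self-contained, the paper's is more modular. The remaining small variations --- using $g(k+1)$ rather than $g(k)$ in the bounded search, and bounding the two error terms in (i) by the finite quantities directly rather than via a global bound on $\{s_n\}$ --- are cosmetic and do not affect correctness.
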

\begin{proof}
    Write $s_n:= \sum_{i=0}^nx_i$.
    
    For (i), suppose we have a sequence of reals $\seq{a_n}$ that is increasing, positive, and satisfies $\forall m\in \NN\, \exists k\in \NN\, (a_k \ge m)$, as well as a sequence of reals $\seq{x_n}$, such that $\seq{s_n}$ is Cauchy. Now $\mathrm{RCA_0}$ proves that Cauchy sequences are bounded so that we can take $S \in \NN$ such that $\forall n \in \NN \, (|s_n|<S)$. Let $k \in \NN$ be given. By the Cauchy property of $\seq{s_n}$, we may take $n \in \NN$ such that, for all $m \ge n$ we have $|s_m - s_n|\le 2^{-k-2}$. Now taking $W$ such that $a_W \ge 2^{k+2}a_nS$ and following the proof of Theorem \ref{thrm:banachkron} implies that for $m \ge \max\{W,n\}$,  $\left| \frac{1}{a_m}\sum_{i=0}^m a_ix_i\right|\le 2^{-k}$.

    For (ii), we have that for an increasing, positive sequence of reals $\seq{a_n}$ satisfying $\forall m\in \NN\, \exists k\in \NN\, (a_k \ge m)$, and a sequence $\seq{x_n}$ with $\seq{s_n}$ Cauchy, part (i) implies that in $\mathrm{RCA_0}$ we can prove $\frac{1}{a_n}\sum_{i=0}^n a_ix_i$ converges to $0$, and the result follows from an application of the axiom of choice on arithmetic formulas. 
    
    For (iii), we demonstrate that we can construct the range of a given one-to-one function $f:\NN\to \NN$, and the result follows from \cite[Lemma III.1.3]{simpson2009subsystems}. Take such an $f:\NN\to \NN$ and let $x_i:= 2^{-f(i)}$. Then, since $RCA_0$ proves that monotone bounded sequences are Cauchy, we have $\seq{s_n}$ is Cauchy. Therefore, by $(\mathrm{RKRON}(\seq{n+1},\seq{x_n}))$, we have $g:\NN\to \NN$ satisfying 
\begin{equation*}
    \forall k \in \NN\, \forall n \in \NN \left(n \ge g(k) \to  \frac{1}{n+1}\sum_{i=0}^n (i+1)2^{-f(i)}\le 2^{-k} \right).
\end{equation*}
One then has that for all $k \in \NN$,
\begin{equation}
\label{eqn:impACA}
    (\exists n\,(f(n) = k))\leftrightarrow (\exists n \le g(k)\,(f(n)= k)),
\end{equation}
the backwards implication is clear. For the forward implication, if we do not have $(\exists n \le g(k)\,(f(n)= k))$ but $(\exists n\,(f(n) = k))$, then $(\exists n > g(k)\,(f(n)= k))$, and for such an $n$, we have 
\begin{equation*}
\frac{1}{n+1}\sum_{i=0}^n (i+1)2^{-f(i)}\le 2^{-k}=2^{-f(n)} 
\end{equation*}
a contradiction. Thus, (\ref{eqn:impACA}) and   $\Delta_1^0$ comprehension implies $\exists X \in 2^\NN\, \forall k \in \NN\, (k \in X \leftrightarrow (\exists n\,(f(n) = k)))$ and the result follows.
\end{proof}
\begin{remark}
\label{rem:rom_mono}
    Example \ref{thrm:kroncomp} demonstrates that when we assume that $\seq{x_n}$is nonnegative, a general computable rate of convergence for Kronecker's lemma, which depends on a computable bound for $\sum_{i=0}^\infty x_i$, cannot exist.

    However, it is a Folklore result in applied proof theory that if $\seq{a_n}$ be a nondecreasing sequence of nonnegative numbers bounded above by $L >0$. Then 
    $$\Phi(\varepsilon,g) = g^{\floor{\frac{L}{\varepsilon}}}(0)$$
    is a rate of metastable convergence for $\seq{a_n}$ (see \cite{kohlenbach:08:book}).

    When $\seq{x_n}$ is a nonnegative sequence, the partial sums will form a nondecreasing sequence. So, although we cannot hope to find a rate of convergence for $\frac{1}{a_n}\sum_{i=0}^n a_ix_i \to 0$ that just depends on a bound for $\sum_{i=0}^\infty x_i$, such a bound would give us a rate of metastability for the convergence of the partial sums of $\seq{\sum_{i=0}^nx_i}$ and we can use Corollary \ref{cor:quantkron} to obtain a rate of metastability for $\frac{1}{a_n}\sum_{i=0}^n a_ix_i \to 0$.
\end{remark}
%%%%%%%%%%%%%%%%%%%%%%%%%%%%%%%%%%%%%%%%%%%%%%%%%%%%%%%%%%%%%%%%%%%%%%%%%%%%%%%%%%%%%%%%%%
%%%%%%%%%%%%%%%%%%%%%%%%%%%%%%%%%%%%%%%%%%%%%%%%%%%%%%%%%%%%%%%%%%%%%%%%%%%%%%%%%%%%%%%%%%
\section{The computational content of Chung's Law of Large Numbers on Banach spaces}
\label{sec:chung}
%%%%%%%%%%%%%%%%%%%%%%%%%%%%%%%%%%%%%%%%%%%%%%%%%%%%%%%%%%%%%%%%%%%%%%%%%%%%%%%%%%%%%%%%%%
%%%%%%%%%%%%%%%%%%%%%%%%%%%%%%%%%%%%%%%%%%%%%%%%%%%%%%%%%%%%%%%%%%%%%%%%%%%%%%%%%%%%%%%%%%
The following generalisation of Theorem \ref{thrm:chung} is due to Woyczynski \cite{woyczynski1974random}:

\begin{theorem}
\label{thrm:chung:banach}
   Suppose $\seq{X_n}$ is a sequence of independent random variables taking values in a type $p$ Banach space $\BB$  with $\EE(X_n) = 0$ for all $n \in \NN$. Let $0< a_0 \le a_1 \le \cdots$ be such that $a_n \to \infty$. For each $n \in \NN$, let $\phi_n : \RR^+ \to \RR^+$ be a function such that
   \begin{equation}
      \frac{\phi_n(t)}{t} \mbox{ and } \frac{t^p}{\phi_n(t)}
   \end{equation}
 are nondecreasing and assume
 $$ \sum_{n=1}^\infty \frac{\EE(\phi_n(|X_n|))}{\phi_n(n)} < \infty.$$
Then $\frac{S_n}{n} \to 0$ almost surely.
\end{theorem}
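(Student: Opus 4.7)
The plan is to reduce the theorem to a pathwise application of Kronecker's lemma on $\BB$ (Theorem~\ref{thrm:banachkron}): with $a_n := n$ and $x_n := X_n/n$ for $n \ge 1$, if one can establish that $\sum_{n \ge 1} X_n/n$ converges almost surely in $\BB$, then on that full-measure event Kronecker's lemma yields
\begin{equation*}
\frac{1}{n}\sum_{i=1}^n i \cdot \frac{X_i}{i} = \frac{1}{n}\sum_{i=1}^n X_i \to 0,
\end{equation*}
and since $X_0/n \to 0$ this gives $S_n/n \to 0$ almost surely. Thus the entire argument reduces to almost-sure convergence of the $\BB$-valued series $\sum_{n \ge 1} X_n/n$.

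To achieve this, I would adapt the classical truncation strategy to the vector-valued setting. Define $Y_n := X_n \cdot 1_{\norm{X_n}\le n}$ and decompose
\begin{equation*}
\frac{X_n}{n} = \frac{Y_n - \EE(Y_n)}{n} + \frac{\EE(Y_n)}{n} + \frac{X_n - Y_n}{n}.
\end{equation*}
The first monotonicity hypothesis --- $\phi_n(t)/t$ nondecreasing --- gives on the tail event $\{\norm{X_n}>n\}$ the bound $\phi_n(\norm{X_n}) \ge (\norm{X_n}/n)\phi_n(n) \ge \phi_n(n)$; a Markov-type estimate then yields $\PP(\norm{X_n}>n) \le \EE(\phi_n(\norm{X_n}))/\phi_n(n)$, so by the summability assumption and Borel--Cantelli, $X_n = Y_n$ for all but finitely many $n$ almost surely. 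The same monotonicity, together with $\EE(X_n)=0$ (which forces $\EE(Y_n) = -\EE(X_n\cdot 1_{\norm{X_n}>n})$), provides $\norm{\EE(Y_n)} \le n\EE(\phi_n(\norm{X_n}))/\phi_n(n)$, and hence $\sum \EE(Y_n)/n$ converges absolutely in $\BB$.

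The fluctuation term is where the type $p$ assumption is used. The second monotonicity condition, $t^p/\phi_n(t)$ nondecreasing, gives on $\{\norm{X_n}\le n\}$ the bound $\norm{X_n}^p \le n^p\phi_n(\norm{X_n})/\phi_n(n)$, so $\EE(\norm{Y_n}^p)/n^p$ (and, after recentering by a constant factor, $\EE(\norm{Y_n - \EE(Y_n)}^p)/n^p$) is summable. Applying the type $p$ inequality (\ref{eqn:Woy}) to the independent mean-zero summands $(Y_n - \EE(Y_n))/n$ then shows that the partial sums form a Cauchy sequence in $L^p(\BB)$, hence converge in $L^p$. Combining the three pieces produces almost-sure convergence of $\sum_{n \ge 1}X_n/n$, and Kronecker's lemma closes the proof.

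The main obstacle is the last step of the fluctuation analysis: upgrading $L^p$-convergence of the partial sums of $(Y_n - \EE(Y_n))/n$ to almost-sure convergence in $\BB$. In the scalar case this is immediate from Kolmogorov's maximal inequality, but in a general type $p$ Banach space one must invoke a vector-valued analogue --- most naturally the It\^{o}--Nisio theorem, which for sums of independent centered random variables in a separable Banach space equates convergence in probability, in $L^p$, and almost surely. Pinning down exactly which vector-valued maximal or symmetrization inequality is invoked (and verifying that the independent mean-zero structure of $(Y_n - \EE(Y_n))/n$ fits its hypotheses) is the delicate point on which the plan rests.
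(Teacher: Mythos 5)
Your plan is mathematically sound, but it takes a genuinely different route from the paper's. The paper never recenters by $\EE(Y_n)$ and never invokes Borel--Cantelli or the It\^o--Nisio theorem. Instead it works entirely through the $\BB$-valued Kolmogorov maximal inequality (stated as Lemma \ref{thrm:Kolineq}, due to Delporte), which asserts that for independent mean-zero $\BB$-valued summands one has $\PP(\max_{i\le n}\norm{S_i}>\varepsilon)\le \EE(\norm{S_n}^r)/\varepsilon^r$ for any $r\ge 1$. The paper truncates $X_k/a_k$ at level $1$ into $X_k'$ and $X_k''$, applies this inequality with $r=p$ to the small part (feeding the type-$p$ bound~(\ref{eqn:Woy}) into $\EE(\norm{\cdot}^p)$) and with $r=1$ to the large part, keeping track of the cross-centring term $\EE(X_k')=-\EE(X_k'')$ inside the estimate. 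The monotonicity of $\phi_n(t)/t$ and $t^p/\phi_n(t)$ is used exactly once, via the elementary fact $\Gamma_n(t)=\phi_n(a_nt)/\phi_n(a_n)\ge \phi_0(t)$ where $\phi_0(t)=t^p$ for $t\le1$ and $\phi_0(t)=t$ for $t>1$ (Corollary \ref{thrm:quant2a}), which collapses the hypothesis to summability of $\EE(\phi_0(\norm{X_k/a_k}))$. The upshot is almost \emph{uniform} (not merely a.s.) Cauchy convergence of $\sum X_k/a_k$, at which point the quantitative probabilistic Kronecker's lemma (Corollary \ref{cor:qauntprobkron}) finishes. Compared to your plan, this buys two things: it sidesteps the ``delicate point'' you flag (the $L^p\to$ a.s.\ upgrade) since the maximal inequality yields a.u.\ convergence directly, and it is far more amenable to the metastable finitization that the paper is after, since every step is a quantitative probability estimate rather than a limit theorem. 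Your It\^o--Nisio route is correct but heavier than needed; you should note that the vector-valued Kolmogorov/Delporte maximal inequality already holds for independent mean-zero summands in any separable Banach space, so the scalar argument you refer to lifts directly without appealing to It\^o--Nisio.
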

In this section, we shall give a computational interpretation to the above theorem by constructing rates of metastability in the conclusion in terms of suitable computational interpretations given to the assumptions in the premise of the theorem. To do this, we must first obtain a computational interpretation of the probabilistic analogue of Kronecker's lemma, which is a core component in the proof of the aforementioned result, as well as many other Strong Law of Large Numbers results. 
%%%%%%%%%%%%%%%%%%%%%%%%%%%%%%%%%%%%%%%%%%%%%%%%%%%%%%%%%%%%%%%%%%%%%%%%%%%%%%%%%%%%%%%%%%
%%%%%%%%%%%%%%%%%%%%%%%%%%%%%%%%%%%%%%%%%%%%%%%%%%%%%%%%%%%%%%%%%%%%%%%%%%%%%%%%%%%%%%%%%%
\subsection{Qunatitative probabilistic Kronecker's lemma}
\label{subsec:qauntprobkron}
%%%%%%%%%%%%%%%%%%%%%%%%%%%%%%%%%%%%%%%%%%%%%%%%%%%%%%%%%%%%%%%%%%%%%%%%%%%%%%%%%%%%%%%%%%
%%%%%%%%%%%%%%%%%%%%%%%%%%%%%%%%%%%%%%%%%%%%%%%%%%%%%%%%%%%%%%%%%%%%%%%%%%%%%%%%%%%%%%%%%%
We shall give a computational interpretation to the analogue of Theorem \ref{thrm:realprobkron} for random variables taking values in $\BB$. Recall from the Section \ref{subsec:probonbanach} that we now take $\BB$ to be a separable Banach space. The proof of Theorem \ref{thrm:realprobkron} was very straightforward, however obtaining a computational interpretation for this result will require a bit of work. This is because the computational interpretation given to almost sure convergence is actually a computational interpretation of almost uniform convergence. Thus, to obtain a quantitative version of the probabilistic Kronecker's lemma, one must analyse the following result:
\begin{theorem}[Probabilistic Kronecker's lemma on $\BB$]
    \label{thrm:probkron}
  Let $\seq{Y_n}$ be a sequence of random variables taking values in $\BB$ and $0< a_0 \le a_1 \le \cdots$ be such that $a_n \to \infty$. 
If $\sum_{i=0}^\infty Y_i < \infty$ almost uniformly , then 
    \begin{equation*}
        \frac{1}{a_n}\sum_{i=0}^n a_iY_i \to 0
    \end{equation*}
    almost uniformly.  
\end{theorem}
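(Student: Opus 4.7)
The plan is to reduce to the deterministic finitary Kronecker's lemma (Theorem \ref{thrm:quantkron}) by applying it pathwise outside a bad set of small probability. Write $S_n := \sum_{i=0}^n Y_i$; the hypothesis says that $\seq{S_n}$ is almost uniformly Cauchy, so for every $\varepsilon, \lambda > 0$ there exists $N_0$ (independent of $k$) such that for all $k \in \NN$,
\[
\PP\!\left(\max_{N_0 \le n \le N_0+k} \norm{S_n - S_{N_0}} \ge \varepsilon/4\right) < \lambda/2.
\]
This is the probabilistic counterpart of the ``small tail'' hypothesis (\ref{eqn:kron:proof:1}) on the window $[N_0, N_0+k]$.

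For the majorant hypothesis (\ref{eqn:con:maj}), I would use that $\max_{i \le N_0}\norm{S_i}$ is a finite maximum of $\BB$-valued random variables, hence a finite-valued real random variable, and therefore bounded in probability: there exists an integer $z$ with
\[
\PP\!\left(\max_{i \le N_0}\norm{S_i} > z\right) < \lambda/2.
\]
For each $k \in \NN$ the bad set
\[
B_k := \left\{\max_{N_0 \le n \le N_0+k}\norm{S_n - S_{N_0}} \ge \varepsilon/4\right\} \cup \left\{\max_{i \le N_0}\norm{S_i} > z\right\}
\]
then satisfies $\PP(B_k) < \lambda$, and on $\Omega \setminus B_k$ the hypotheses of Theorem \ref{thrm:quantkron} hold pointwise with $\gamma(\varepsilon/4) := N_0$, the constant sequence $z_n := z$, and window $w := N_0 + k$.

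Applying Theorem \ref{thrm:quantkron} pathwise on $\Omega \setminus B_k$ yields the index
\[
N' := \max\!\left\{N_0,\; f_{\seq{a_n}}\!\left(\frac{4 a_{N_0} z}{\varepsilon}\right)\right\},
\]
which is independent of $\omega$ and of $k$, together with the estimate $\bignorm{\frac{1}{a_n}\sum_{i=0}^n a_i Y_i(\omega)} < \varepsilon$ for every $\omega \notin B_k$ and every $n \in [N', N_0 + k]$. Given any $k' \in \NN$, setting $k := k' + (N' - N_0)$ gives $[N', N'+k'] \subseteq [N', N_0 + k]$, so
\[
\PP\!\left(\max_{N' \le n \le N'+k'} \bignorm{\frac{1}{a_n}\sum_{i=0}^n a_i Y_i} \ge \varepsilon\right) \le \PP(B_k) < \lambda,
\]
which is exactly almost uniform convergence to $0$.

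The crux is the uniformity in $k$ of $N'$; the quantitative Theorem \ref{thrm:quantkron} is designed precisely to deliver this, since $\Gamma_{\seq{a_n}}$ depends only on $\gamma$, $\seq{z_n}$ and $\varepsilon$, and this is exactly why the stronger finitary form is needed here rather than the qualitative Theorem \ref{thrm:banachkron}. The only genuinely probabilistic ingredient is the existence of the majorant $z$, which is a routine consequence of finite-valuedness. I expect the real work to arise in the following subsection, when one wishes to replace this argument by a quantitative rate of almost sure metastable convergence: there the choice of $z$ must be made \emph{effective} in terms of tail bounds on $\max_{i \le N_0}\norm{S_i}$, and this quantitative information must be threaded through the formula for $N'$ in a controlled way.
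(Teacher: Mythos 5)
Your proof is correct, but it takes a genuinely different route from the paper's. The paper's proof of Theorem~\ref{thrm:probkron} is the short qualitative chain of implications: almost uniform Cauchy implies almost sure Cauchy (trivially), then Kronecker's lemma is applied pointwise to obtain almost sure convergence to $0$, and finally Egorov's theorem (via repeated application of a monotone-convergence lemma from a companion paper) upgrades this to almost uniform convergence. Your argument instead bypasses Egorov's theorem entirely: you work directly with the finitary deterministic Kronecker's lemma (Theorem~\ref{thrm:quantkron}) on a single finite window $[N_0, N_0+k]$, carving out a bad set $B_k$ of probability $<\lambda$ outside of which both the majorant hypothesis~(\ref{eqn:con:maj}) and the Cauchy hypothesis~(\ref{eqn:kron:proof:1}) hold pathwise, and observing that the resulting index $N' = \Gamma_{\seq{a_n}}(\gamma,\seq{z_n},\varepsilon)$ is \emph{deterministic} and, crucially, independent of the window length $k$. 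This is the exact mechanism that powers the paper's subsequent finitary probabilistic Kronecker's lemma (Theorem~\ref{thrm:quantprobkron1}); you have in effect proved the quantitative result and specialized it to the qualitative statement. The paper is explicit that this is the better route: it remarks, immediately after the qualitative proof, that a direct quantitative analysis of the Egorov step would be of bar-recursive complexity, and that the uniformities in the finitary Kronecker's lemma allow one to avoid Egorov altogether — which is precisely what your argument does. The one place your proof leans on a soft probabilistic fact is the existence of the integer $z$ with $\PP(\max_{i \le N_0}\norm{S_i} > z) < \lambda/2$, which is the non-effective ingredient; as you correctly anticipate, making this choice of $z$ effective (via a rate of almost sure finiteness or a modulus of uniform boundedness for $\seq{S_n}$, as in Definition~\ref{def:asf} and Remark~\ref{rem:finZ}) is exactly what the quantitative Theorem~\ref{thrm:quantprobkron1} and Corollary~\ref{cor:qauntprobkron} go on to do.
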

\begin{proof}
     \begin{equation*}
     \begin{aligned}
         & \sum_{i=0}^\infty Y_n < \infty \textrm{ $\mathrm{a.u}$} \to \sum_{=0}^\infty Y_n < \infty \textrm{ $\mathrm{a.s}$} \\
         &\to \frac{1}{a_n}\sum_{i=0}^n a_iY_i \to 0 \textrm{ $\mathrm{a.s}$} \to \frac{1}{a_n}\sum_{i=0}^n a_iY_i \to 0 \textrm{ $\mathrm{a.u}$}
     \end{aligned}
 \end{equation*}
 The first and last implication follows from the fact that almost sure uniform convergence is equivalent to almost sure convergence. This result is sometimes known as Ergorov's theorem and can be obtained through repeated applications of \cite[Lemma 3.1]{NeriPowell:martingales:2024}. The second implication follows by a pointwise application of Kronecker's lemma.
\end{proof}
Analysing the above proof to obtain a computational interpretation of the probabilistic Kronecker's lemma will involve a quantitative analysis of the step evoking Egorov's theorem. There does exist such a quantitative theorem \cite{Avigad-Dean-Rute:Dominated:12}; however, this would correspond to a solution of bar recursive complexity. 

Due to the uniformities of the rates obtained from Kronecker's lemma, one can get a quantitative probabilistic Kronecker's lemma that avoids the need for a quantitative analysis of Egorov's theorem.

\begin{theorem}[Finitary probabilistic Kronecker's lemma]
\label{thrm:quantprobkron1}
Let $\seq{Y_n}$ be a sequence of $\BB$ valued random variables, set $Z_n := \sum_{i=0}^n Y_i$. Let $0< a_0 \le a_1 \le \cdots$ be such that $a_n \to \infty$ and $f_{\seq{a_n}}$ be as in Theorem \ref{thrm:kroncomp}. 

    For every $\psi:\QQ^+ \times \QQ^+ \to \NN$, sequence of natural numbers $\seq{z_n}$, $\varepsilon, \lambda \in \QQ^+$ and $k \in \NN$, if  $M := \psi(\frac{\varepsilon}{4},\frac{\lambda}{2}) = \gamma_{\frac{\lambda}{2}}(\frac{\varepsilon}{4})$ satisfies,  \begin{equation*}
         \mathbb{P}\left(\bigcup_{i=0}^M\left\{\norm{Z_i} \ge z_M\right\}\right) \le \frac{\lambda}{2}
    \end{equation*}
    and
    \begin{equation*}
       \mathbb{P}\left( \max_{M \le m \le k}\norm{Z_M -  Z_m} \ge \frac{\varepsilon}{4}\right) < \frac{\lambda}{2}
    \end{equation*} 
  then $N := \Psi_{\seq{a_n}, \seq{z_n}}(\psi,\varepsilon,\lambda)$ satisfies,
    \begin{equation*}
         \mathbb{P}\left( \max_{N \le n \le k}\bignorm{\frac{1}{a_n}\sum_{i=0}^n a_iY_i} \ge \varepsilon\right) < \lambda.
    \end{equation*}
    Where,
$$\Psi_{\seq{a_n}, \seq{z_n}}(\psi,\varepsilon,\lambda) := \max\left\{Q,f_{\seq{a_n}}\left(\frac{4a_Qz_Q}{\varepsilon}\right)\right\}= \Gamma_{\seq{a_n}}\left(\gamma_{\frac{\lambda}{2}},\seq{z_n},\varepsilon\right)$$
($\Gamma$ as defined in Theorem \ref{thrm:quantkron}) and,
    \begin{equation*}
        \begin{aligned}
            &Q := \psi\left(\frac{\varepsilon}{4},\frac{\lambda}{2}\right)\\
            & \gamma_\lambda(\varepsilon):= \psi(\varepsilon,\lambda).
        \end{aligned}
    \end{equation*}
\end{theorem}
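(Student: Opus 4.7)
The plan is to reduce the probabilistic statement to a pointwise application of the deterministic finitary Kronecker's lemma (Theorem \ref{thrm:quantkron}), exploiting the explicit formula for $N$: since $\Psi_{\seq{a_n}, \seq{z_n}}(\psi, \varepsilon, \lambda) = \Gamma_{\seq{a_n}}(\gamma_{\lambda/2}, \seq{z_n}, \varepsilon)$ by construction, no quantitative Egorov-style argument is required. This is exactly the uniformity point alluded to right before the statement, and is what makes this proof much cleaner than a direct quantitative analysis of the proof of Theorem \ref{thrm:probkron} would be.

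First I would isolate the two ``bad'' events
\begin{equation*}
A_1 := \bigcup_{i=0}^M \{\norm{Z_i} \ge z_M\}, \qquad A_2 := \left\{\max_{M \le m \le k} \norm{Z_M - Z_m} \ge \varepsilon/4\right\},
\end{equation*}
where $M := \psi(\varepsilon/4, \lambda/2)$. By the two hypotheses $\PP(A_1) \le \lambda/2$ and $\PP(A_2) < \lambda/2$, so a union bound gives $\PP(A_1 \cup A_2) < \lambda$. On the complement $(A_1 \cup A_2)^c$, for each $\omega$ the realisations $x_i := Y_i(\omega)$ with partial sums $s_n = Z_n(\omega)$ satisfy exactly the two input conditions (\ref{eqn:con:maj}) and (\ref{eqn:kron:proof:1}) of Theorem \ref{thrm:quantkron} on the interval $[M, k]$, using $\gamma := \gamma_{\lambda/2}$ as the input function (so that $\gamma(\varepsilon/4) = \psi(\varepsilon/4, \lambda/2) = M$).

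Then applying Theorem \ref{thrm:quantkron} pointwise on this complement, with $w = k$, yields
\begin{equation*}
\bignorm{\frac{1}{a_n} \sum_{i=0}^n a_i Y_i(\omega)} < \varepsilon \quad \text{for all } n \in [N, k],
\end{equation*}
where $N = \Gamma_{\seq{a_n}}(\gamma_{\lambda/2}, \seq{z_n}, \varepsilon)$. A direct expansion of the definitions shows this coincides with $\Psi_{\seq{a_n}, \seq{z_n}}(\psi, \varepsilon, \lambda)$. Consequently the ``failure'' event $\{\max_{N \le n \le k} \norm{\frac{1}{a_n} \sum_{i=0}^n a_i Y_i} \ge \varepsilon\}$ is contained in $A_1 \cup A_2$, and the claimed bound of $\lambda$ on its probability follows. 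I do not anticipate a genuine obstacle here: the only step requiring any care is the bookkeeping that matches $\Gamma_{\seq{a_n}}(\gamma_{\lambda/2}, \seq{z_n}, \varepsilon)$ with $\Psi_{\seq{a_n}, \seq{z_n}}(\psi, \varepsilon, \lambda)$, and the whole reason Theorem \ref{thrm:quantkron} was phrased in its finitary form was precisely to enable such a direct pointwise reduction.
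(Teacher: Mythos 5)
Your proposal is correct and is essentially the paper's own proof: the paper also reduces to a pointwise application of Theorem \ref{thrm:quantkron}, organized as a decomposition $\PP(\text{fail}) = \PP(\text{fail}\cap A_1) + \PP(\text{fail}\cap A_1^c)$ followed by the contrapositive observation that $\text{fail}\cap A_1^c \subseteq A_2$, which is equivalent to your direct inclusion $\text{fail}\subseteq A_1\cup A_2$ and union bound.
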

\begin{proof}
Let  $\psi,\seq{z_n}, \varepsilon, \lambda, k$ satisfying the premise of the theorem be given. We have,
\begin{equation*}
        \begin{aligned}
            &\mathbb{P}\left(\max_{N \le n \le k}\bignorm{\frac{1}{a_n}\sum_{i=0}^n a_iY_i} \ge \varepsilon\right) \\
            &= \mathbb{P}\left(\left\{\max_{N \le n \le k}\bignorm{\frac{1}{a_n}\sum_{i=0}^n a_iY_i} \ge \varepsilon\right\} \cap \bigcup_{i=1}^M\left\{\norm{Z_i} \ge z_M\right\}\right) 
            \\ 
            &+\mathbb{P}\left(\left\{\max_{N \le n \le k}\bignorm{\frac{1}{a_n}\sum_{i=0}^n a_iY_i} \ge \varepsilon\right\} \cap \bigcap_{i=1}^M\left\{\norm{Z_i} < z_M\right\}\right)  \\
            &\le \frac{\lambda}{2} + \mathbb{P}\left(\left\{\max_{N \le n \le k}\bignorm{\frac{1}{a_n}\sum_{i=0}^n a_iY_i} \ge \varepsilon\right\} \cap \bigcap_{i=1}^M\left\{\norm{Z_i} < z_M\right\}\right). 
        \end{aligned}
    \end{equation*}
    Suppose 
$$\max_{N \le n \le k}\bignorm{\frac{1}{a_n}\sum_{i=0}^n a_iY_i} \ge \varepsilon \land \bigwedge_{i=1}^M\left(\norm{Z_i} < z_M\right),$$
     but for all $m \in [M, k], |Z_m -  Z_M| < \frac{\varepsilon}{4}$. Theorem \ref{thrm:quantkron} implies that $$\forall n \in [N,k]\bignorm{\frac{1}{a_n}\sum_{i=0}^n a_iY_i} < \varepsilon, $$ 
     (recalling $N =  \Gamma_{\seq{a_n}}(\gamma_{\frac{\lambda}{2}},\seq{z_n},\varepsilon)$), which is a contradiction. So, 
     $$\max_{M \le m \le k}\norm{Z_M -  Z_m} \ge \frac{\varepsilon}{4}.$$ 
     This implies,
    \begin{equation*}
        \begin{aligned}
         &\mathbb{P}\left(\left\{\max_{N \le n \le k}\bignorm{\frac{1}{a_n}\sum_{i=0}^n a_iY_i} \ge \varepsilon\right\} \cap \bigcap_{i=1}^M\left\{\norm{Z_i} < z_M\right\}\right) \\ &\qquad\qquad\le\mathbb{P}\left( \max_{M \le m \le k}\norm{Z_M -  Z_m} \ge \frac{\varepsilon}{4}\right) < \frac{\lambda}{2}
        \end{aligned}
    \end{equation*}
    So we are done.
 \end{proof}
Theorem \ref{thrm:quantprobkron1} allows us to immediately obtain a quantitative version of the probabilistic Kronecker's lemma, where we obtain rates in the conclusion given rates in the premise.
\begin{corollary}
\label{cor:qauntprobkron}
Let $\seq{a_n}$, $\seq{Z_n}$ be as in Theorem \ref{thrm:quantprobkron1} and for each $\lambda \in \QQ^+$, let $\seq{z_n(\lambda)}$ be a sequence of nondecreasing natural numbers satisfying, for all $n \in \NN$,
    \begin{equation}
    \label{eqn:condition}
    \mathbb{P}\left(\bigcup_{i=0}^n\left\{\norm{Z_i} \ge z_n(\lambda)\right\}\right)\le \lambda
    \end{equation}
    for all $n \in \NN$. 
    
    Now suppose, $\sum_{i = 0}^n Y_i$
    converges almost surely with a rate of metastable almost sure convergence $\Phi$. Then 
    $$ \frac{1}{a_n}\sum_{i=0}^n a_iY_i$$ converges to $0$ almost surely, with rate of metastable almost sure convergence
    \begin{equation*}
        \kappa^p_{\Phi,\seq{a_n},\seq{z_n}}(\varepsilon,\lambda,K) = \max\left\{Q,f_{\seq{a_n}}\left(\frac{4a_Qz_Q(\lambda/2)}{\varepsilon}\right)\right\},
    \end{equation*}
   where, 
   \begin{equation*}
        Q := \Phi\left(\frac{\varepsilon}{4},\frac{\lambda}{2},H\right)
\end{equation*}
and
    \begin{equation*}
         H := H_{\varepsilon,\lambda,K,\seq{a_n},\seq{z_n}}(n) :=  \tilde{K}\left(\max\left\{n,f_{\seq{a_n}}\left(\frac{4a_nz_n(\lambda/2)}{\varepsilon}\right)\right\}\right), 
    \end{equation*}
    with $\tilde{K}(n)=n+K(n)$.
\end{corollary}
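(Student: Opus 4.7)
The plan is to reduce the claim to the finitary probabilistic Kronecker's lemma (Theorem \ref{thrm:quantprobkron1}) by cooking up a $\psi$ from the metastability data supplied by $\Phi$. Fix $\varepsilon, \lambda \in \QQ^+$ and $K:\NN\to\NN$, and let $H$ be the counterfunction defined in the statement. I would first invoke $\Phi$ at parameters $(\varepsilon/4, \lambda/2, H)$ to produce some $M \le Q := \Phi(\varepsilon/4, \lambda/2, H)$ witnessing
$$\mathbb{P}\left(\max_{M \le m \le M+H(M)}\norm{Z_m - Z_M} \ge \varepsilon/4\right) < \lambda/2.$$

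Next I would define $\psi$ by $\psi(\varepsilon/4, \lambda/2) := M$, its values elsewhere being immaterial, and apply Theorem \ref{thrm:quantprobkron1} to $\psi$, with the sequence $\seq{z_n(\lambda/2)}$ in the role of $\seq{z_n}$ and with $k := M + H(M)$. Hypothesis (1) at index $M$ is exactly the $\lambda/2$ instance of the tail assumption (\ref{eqn:condition}), and hypothesis (2) with this $k$ is precisely the bound produced in the previous step. The theorem thus delivers
$$N := \max\left\{M, f_{\seq{a_n}}\left(\frac{4 a_M z_M(\lambda/2)}{\varepsilon}\right)\right\}$$
satisfying $\mathbb{P}\!\left(\max_{N \le n \le k}\norm{\tfrac{1}{a_n}\sum_{i=0}^n a_i Y_i} \ge \varepsilon\right) < \lambda$.

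It then remains to check two monotonicity bookkeeping statements. First, $N \le \kappa^p_{\Phi,\seq{a_n},\seq{z_n}}(\varepsilon,\lambda,K)$, which follows from $M \le Q$ together with the nondecreasing nature of $f_{\seq{a_n}}$, $\seq{a_n}$ and $\seq{z_n(\lambda/2)}$. Second, the localized estimate over $[N,k]$ must be transferred to one over $[N, N+K(N)]$, which requires $N + K(N) \le k$; this is precisely built into the definition of $H$, since
$$N + K(N) = \tilde{K}(N) = \tilde{K}\left(\max\left\{M, f_{\seq{a_n}}\left(\frac{4 a_M z_M(\lambda/2)}{\varepsilon}\right)\right\}\right) = H(M) \le M + H(M) = k.$$

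The one genuine subtlety is the design of $H$: it must be fixed before the metastability witness $M$ is known, yet the interval of length $H(M)$ has to be long enough to cover $[N, N+K(N)]$ once $N$ is computed from $M$ via Theorem \ref{thrm:quantprobkron1}. The nested expression for $H$ in the statement of the corollary is precisely what closes this self-referential loop; once it is in place, the rest of the argument reduces to a single substitution into Theorem \ref{thrm:quantprobkron1} followed by the two short monotonicity checks above.
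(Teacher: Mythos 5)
Your proposal is correct and follows essentially the same route as the paper: invoke $\Phi$ at $(\varepsilon/4,\lambda/2,H)$ to get $M \le Q$, set the constant $\psi$, apply Theorem \ref{thrm:quantprobkron1} with $\seq{z_n(\lambda/2)}$, and bound the resulting $N$ by $\kappa^p$ using monotonicity. The only cosmetic difference is that the paper takes $k = H(M)$ directly (noting $H(M) = N + K(N)$ on the nose), whereas you take $k = M + H(M)$ and then shrink the interval via $N+K(N) = H(M) \le M+H(M)$; both are valid and the calculation is identical in substance.
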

\begin{proof}
Let  $\varepsilon,\lambda \in \QQ^+$ and $K:\NN \to \NN$ be given. There exists $M \le Q:= \Phi(\frac{\varepsilon}{4},\frac{\lambda}{2},H)$ such that 
  \begin{equation*}
        \mathbb{P}\left( \max_{M \le m \le H(M)}\norm{Z_M -  Z_m} \ge \frac{\varepsilon}{4}\right)\le \mathbb{P}\left( \max_{M \le m \le M+ H(M)}\norm{Z_M -  Z_m} \ge \frac{\varepsilon}{4}\right) < \frac{\lambda}{2}
    \end{equation*}
 by the definition of a rate of metastability. Let $\psi(\delta_1,\delta_2) = M$, for all $\delta_1,\delta_2 \in \QQ^+$. Then 
 $$N = \Psi_{\seq{a_n}, \seq{z_n(\lambda/2)}}(\psi,\varepsilon,\lambda) = \max\left\{M,f_{\seq{a_n}}\left(\frac{4a_Mz_M(\lambda/2)}{\varepsilon}\right)\right\}\le \max\left\{Q,f_{\seq{a_n}}\left(\frac{4a_Qz_Q(\lambda/2)}{\varepsilon}\right)\right\}$$ 
 and $k =H(M) = N+K(N)$  satisfies, 

   \begin{equation*}
         \mathbb{P}\left( \max_{N \le n \le N+K(N)}\norm{\frac{1}{a_n}\sum_{i=0}^n a_iY_i} \ge \varepsilon\right) < \lambda
    \end{equation*}
    so we are done.
\end{proof}

\begin{remark}
\label{rem:probroc}
Note that as in the deterministic case (see Remark \ref{rem:rocimpram}), if $\Phi$ above is a rate of convergence, then we get a rate of almost sure convergence to zero given by the above expression, but with 
\begin{equation*}
           Q := \Phi\left(\frac{\varepsilon}{4},\frac{\lambda}{2}\right).
\end{equation*}
\end{remark}

Condition ($\ref{eqn:condition}$) can naturally be satisfied if we give a suitable computational interpretation to the finiteness of random variables.
\begin{proposition}
    A real-valued random variable $Y$ is finite almost everywhere iff
$$\PP(|Y| \ge m) \to 0$$
as $m \to \infty.$
\end{proposition}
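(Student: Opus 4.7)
The plan is to identify the event $\{|Y| = \infty\}$ with the monotone intersection $\bigcap_{m \in \NN}\{|Y|\geq m\}$ and then apply continuity of the probability measure along this decreasing family of events. The sets $A_m := \{|Y| \geq m\}$ satisfy $A_{m+1} \subseteq A_m$, and their intersection is precisely the event $\{|Y| = \infty\}$, since $|Y(\omega)| = \infty$ iff $|Y(\omega)| \geq m$ for every $m$.

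For the forward direction, I would assume $\PP(|Y| = \infty) = 0$. Since $(\Omega, \mathcal{F}, \PP)$ has finite total measure, continuity of $\PP$ from above applied to the decreasing sequence $\seq{A_m}$ gives
\begin{equation*}
\lim_{m\to\infty} \PP(|Y|\geq m) \;=\; \PP\!\left(\bigcap_{m\in\NN} A_m\right) \;=\; \PP(|Y| = \infty) \;=\; 0.
\end{equation*}
Since $\PP(|Y|\geq t)$ is monotone nonincreasing in $t$, convergence along the integers upgrades to convergence as $t \to \infty$ over the reals.

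For the reverse direction, monotonicity of $\PP$ yields $\PP(|Y| = \infty) \leq \PP(|Y|\geq m)$ for every $m \in \NN$, and letting $m \to \infty$ under the hypothesis gives $\PP(|Y|=\infty) = 0$, i.e.\ $Y$ is finite almost everywhere.

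There is no real obstacle here; the only mildly delicate point is verifying that the event $\{|Y| = \infty\}$ is measurable, which follows because it is a countable intersection of the measurable sets $\{|Y| \geq m\}$, and noting that the measure-theoretic identification of $\{|Y|=\infty\}$ with the monotone intersection requires only the definition of $\infty$ as the supremum of the naturals. Both implications are standard consequences of countable additivity applied to nested events.
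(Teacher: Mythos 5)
Your proof is correct and takes essentially the same approach as the paper: identify $\{|Y|=\infty\}$ as the decreasing intersection $\bigcap_m A_m$ with $A_m = \{|Y|\geq m\}$ and invoke continuity of the probability measure from above. You spell out both implications a bit more explicitly than the paper does, but the core argument is identical.
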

\begin{proof}
   The events 
    $$A_m = \{\omega \in \Omega: |Y(\omega)| \ge m\}$$
    form a decreasing sequence of events; thus, by the continuity of the probability measure, we have, 
    $$\PP(A_m) \to \PP\left(\bigcap_{m = 0}^\infty A_m\right)= \PP(\{\omega: |Y(\omega)| = \infty\})$$
as $m \to \infty$, and the result follows.
\end{proof}
The above result can also be obtained through an application of \cite[Lemma 3.1]{NeriPowell:martingales:2024} by noting that the random variable $Y$ being almost surely finite on the element $\omega \in \Omega$ is equivalent to the formula $\exists N\, (|Y(\omega)| \le N)$ which satisfies the monotonicity requirement of \cite{NeriPowell:martingales:2024}. We now have the following definition:
\begin{definition}
\label{def:asf}
$R: \QQ^+ \to \NN$ is a rate of almost sure finiteness if it is a rate of convergence for 
$$\PP(|Y| \ge m) \to 0,$$ 
that is, if it satisfies,
$$\forall \varepsilon \in \QQ^+ \,(\PP(|Y| \ge R(\varepsilon)) \le \varepsilon)$$
\end{definition}
\begin{example}
\label{ex:fin_rate}
    If a random variable, $Y$, is integrable then we have 
$$\PP(|Y| \ge m) \le \frac{\EE(|Y|)}{m}$$
for all $m >0$, by Markov's inequality. Thus, 
$$R(k):= \frac{E}{\varepsilon}$$
is a rate of almost sure finiteness, for all $E \in \NN$ satisfying $E \ge \EE(|Y|)$.
\end{example}
\begin{remark}
\label{rem:finZ}
    Now we have if $\seq{Z_n}$ have respective rates of almost finiteness $\seq{R_n}$, then for any $k\in \NN$ we can take 
$$z_n(\lambda):= \max_{i \le n} R_i\left(\frac{\lambda}{n}\right)$$
in Corollary \ref{cor:qauntprobkron}, as 
\begin{equation*}
    \mathbb{P}\left(\bigcup_{i=0}^n\left\{|Z_i| \ge z_n(\lambda)\right\}\right)\le \sum_{i=0}^n\mathbb{P}\left(\left\{|Z_i| \ge z_n(\lambda)\right\}\right)\le \sum_{i=0}^n\mathbb{P}\left(\left\{|Z_i| \ge R_i(\lambda/n)\right\}\right) \le \lambda.
\end{equation*}
Furthermore, if there exists a function $R:\QQ^+ \to \NN$ such that 
\begin{equation*}
\mathbb{P}\left(\bigcup_{i=0}^\infty\left\{|Z_i| \ge R(\lambda)\right\}\right) \le \lambda,
\end{equation*}
we can take $z_n(\lambda):= R(\lambda)$. Such an $R$ is known as a modulus of uniform boundedness for $\seq{Z_n}$ and was introduced in \cite{NeriPowell:martingales:2024}.
\end{remark}

%%%%%%%%%%%%%%%%%%%%%%%%%%%%%%%%%%%%%%%%%%%%%%%%%%%%
\subsection{Rates for Chung's Law of Large Numbers on Banach spaces}
\label{subsec:chungrates}
%%%%%%%%%%%%%%%%%%%%%%%%%%%%%%%%%%%%%%%%%%%%%%%%%%%%
Throughout this section, assume $\BB$ is a type $p$ Banach space with a constant $C$ satisfying (\ref{eqn:Woy}). 
In this section, we shall give a quantitative version of Chung's Strong Law of Large Numbers on type $p$ Banach spaces due to Woyczynski. The result follows from some lemmas, the first of which is a generalisation of Kolmogorov's inequality.
\begin{lemma} [Kolmogorov’s inequality, cf. Theorem 3.2.4B of \cite{delporte1964functions}]
\label{thrm:Kolineq}
Let $\seq{X_n}$ be a sequence of independent random variables taking values in $\BB$, each with expected value $0$. Setting $S_n := \sum_{i=0}^n X_i$, we have, for all $n \in \NN$, $\varepsilon > 0$ and $r \ge 1$,
\begin{equation*}
    \mathbb{P}\left(\max_{1 \le i \le n}\norm{S_i} > \varepsilon\right) \le \frac{\EE(\norm{S_n}^r)}{\varepsilon^r}. 
\end{equation*}
\end{lemma}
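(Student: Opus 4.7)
The plan is to prove this inequality by a first-hitting-time decomposition combined with a conditional Jensen step, generalising the classical Kolmogorov argument for real-valued martingales to the Banach-valued setting. The fact that the expectation is the Bochner integral (Theorem \ref{thrm:bochner}) and that $\|\cdot\|^r$ is convex is what makes the argument transfer essentially unchanged.

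First, I would introduce the disjoint events
\begin{equation*}
A_i := \{\|S_1\|\le\varepsilon,\ldots,\|S_{i-1}\|\le\varepsilon,\ \|S_i\|>\varepsilon\},\qquad i=1,\ldots,n,
\end{equation*}
whose union equals $\{\max_{1\le i\le n}\|S_i\|>\varepsilon\}$. Each $A_i$ lies in $\mathcal{F}_i:=\sigma(X_0,\ldots,X_i)$. Using $\|S_i\|>\varepsilon$ on $A_i$ gives the trivial bound
\begin{equation*}
\varepsilon^{r}\,\mathbb{P}\!\left(\max_{1\le i\le n}\|S_i\|>\varepsilon\right) = \sum_{i=1}^{n}\varepsilon^{r}\mathbb{P}(A_i) \le \sum_{i=1}^{n}\mathbb{E}\!\left(\|S_i\|^{r}\mathbf{1}_{A_i}\right),
\end{equation*}
so the task reduces to showing that $\sum_{i=1}^{n}\mathbb{E}(\|S_i\|^{r}\mathbf{1}_{A_i})\le \mathbb{E}(\|S_n\|^{r})$.

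The key step is that for each $i\le n$, writing $S_n=S_i+(S_n-S_i)$, the increment $S_n-S_i$ is independent of $\mathcal{F}_i$ with $\mathbb{E}(S_n-S_i)=0$. Since the map $y\mapsto \|x+y\|^{r}$ is convex for each fixed $x\in\mathbb{B}$ and $r\ge 1$, conditional Jensen's inequality (applied fiberwise with $x=S_i(\omega)$, which is $\mathcal{F}_i$-measurable) yields
\begin{equation*}
\mathbb{E}\!\left(\|S_n\|^{r}\,\middle|\,\mathcal{F}_i\right) \ge \|S_i+\mathbb{E}(S_n-S_i\mid\mathcal{F}_i)\|^{r} = \|S_i\|^{r}.
\end{equation*}
Multiplying by $\mathbf{1}_{A_i}\in\mathcal{F}_i$ and taking expectations gives $\mathbb{E}(\|S_n\|^{r}\mathbf{1}_{A_i})\ge \mathbb{E}(\|S_i\|^{r}\mathbf{1}_{A_i})$. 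Summing over $i$ and using the disjointness of the $A_i$'s then gives $\sum_{i=1}^{n}\mathbb{E}(\|S_i\|^{r}\mathbf{1}_{A_i})\le \mathbb{E}(\|S_n\|^{r}\mathbf{1}_{\bigcup_i A_i})\le \mathbb{E}(\|S_n\|^{r})$, completing the proof.

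The main obstacle, such as it is, lies in justifying the conditional Jensen step carefully in the Banach-valued setting: one needs that the Bochner conditional expectation of $S_n-S_i$ vanishes (which follows from independence from $\mathcal{F}_i$ and $\mathbb{E}(S_n-S_i)=0$, using the linearity and contractivity in part (b) of Theorem \ref{thrm:bochner}) and that the scalar-valued conditional Jensen inequality applies to the convex function $y\mapsto \|x+y\|^{r}$ for $\mathcal{F}_i$-measurable $x$. Both points are standard once one works with Bochner integrals of tight (Radon) random variables so that separability allows one to reduce to the real-valued case via approximation by simple random variables; after that the computation is routine.
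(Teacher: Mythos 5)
The paper does not actually give a proof of this lemma: after stating it, the author writes ``We omit the proof of this result,'' citing Delporte, so there is no in-paper argument to compare yours against. That said, your proof is correct and is the standard first-hitting-time (Doob-type) argument: the decomposition into disjoint $A_i\in\mathcal{F}_i$, the trivial lower bound $\varepsilon^r\mathbf{1}_{A_i}\le\|S_i\|^r\mathbf{1}_{A_i}$, and the inequality $\EE(\|S_i\|^r\mathbf{1}_{A_i})\le\EE(\|S_n\|^r\mathbf{1}_{A_i})$ via the convexity of $y\mapsto\|x+y\|^r$ (for $r\ge 1$) together with the independence of $S_n-S_i$ from $\mathcal{F}_i$ and $\EE(S_n-S_i)=0$ are all sound. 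One small presentational point: you invoke Bochner \emph{conditional} expectation, which the paper never introduces (Theorem \ref{thrm:bochner} only gives the Bochner integral); since $S_n-S_i$ is independent of $\mathcal{F}_i$, you can sidestep this entirely by writing $\EE(\|S_n\|^r\mathbf{1}_{A_i})=\EE\bigl(\mathbf{1}_{A_i}\int\|S_i+y\|^r\,dP_{S_n-S_i}(y)\bigr)$ (Fubini/product structure) and applying Jensen for the ordinary Bochner integral to the inner integral, which keeps the argument within the machinery the paper actually sets up.
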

We omit the proof of this result. We now need quantitative versions of \cite[Theorems 2 and 2a]{woyczynski1974random}.
\begin{theorem}[Quantitative version of Theorem 2 of \cite{woyczynski1974random}]
\label{thrm:quant2}
Let $\seq{X_n}$ be a sequence of independent random variables taking values in $\BB$, each with expected value $0$. Suppose $\sum_{i=0}^n\EE(\phi_0(\norm{X_i}))$ converges with rate of Cauchy metastability $\Phi$, where $\phi_0(t) = t^p$ for $0 \le t \le 1$ and $\phi_0(t) = t$ for $t > 1$. Then $S_n$ converges almost surely with rate of metastable almost sure convergence 
\begin{equation*}
    \Delta_{\Phi}(\varepsilon,\lambda,K) = \Phi(\Tilde{\varepsilon},K),
\end{equation*}
where 
$$\Tilde{\varepsilon} := \min\left\{\frac{\varepsilon\lambda}{6}, \frac{\lambda\varepsilon^p}{2^{3p-1}3C},\left(\frac{\lambda\varepsilon^p}{2^{2p-1}3}\right)^{\frac{1}{p}} \right\}$$

and $S_n := \sum_{i=0}^n X_i$.
\end{theorem}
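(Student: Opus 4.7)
The plan is to follow the standard truncation argument underlying Woyczynski's original proof. Given $\varepsilon, \lambda \in \QQ^+$ and $K : \NN \to \NN$, I would first apply the rate of Cauchy metastability $\Phi$ at $(\tilde{\varepsilon}, K)$ to obtain some $N \le \Phi(\tilde{\varepsilon}, K)$ such that
\[
\sum_{i=N+1}^{N+K(N)} \EE(\phi_0(\norm{X_i})) < \tilde{\varepsilon},
\]
which is available because the partial sums $\sum_{i=0}^n \EE(\phi_0(\norm{X_i}))$ are nondecreasing, so Cauchyness collapses to a one-sided tail bound.

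Next, I would truncate at level $1$: for each $i$ in the window $[N+1, N+K(N)]$, set $X_i' := X_i \mathbf{1}_{\norm{X_i} \le 1}$ and $X_i'' := X_i - X_i'$; the assumption $\EE X_i = 0$ forces $\EE X_i' = -\EE X_i''$. For $m \in [N, N+K(N)]$, setting $A_m := \sum_{i=N+1}^{m}(X_i' - \EE X_i')$, $B_m := \sum_{i=N+1}^{m} X_i''$, and $C_m := \sum_{i=N+1}^{m} \EE X_i'$ gives $S_m - S_N = A_m + B_m + C_m$, where $A_m$ is a mean-zero sum of independent bounded summands, $B_m$ is a rare ``large-value'' contribution, and $C_m$ is a deterministic drift. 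The aim is then to bound $\PP(\max_m \norm{S_m - S_N} \ge \varepsilon)$ by $\lambda$ by splitting it across these three pieces.

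Three estimates will drive the proof. First, $C_m$ is deterministic with
\[
\norm{C_m} \le \sum \norm{\EE X_i''} \le \sum \EE\norm{X_i''} \le \sum \EE\phi_0(\norm{X_i}) < \tilde{\varepsilon},
\]
since $\phi_0(t) = t$ for $t > 1$. Second, $\max_m \norm{B_m} \le \sum_i \norm{X_i''}$, so Markov's inequality gives $\PP(\max_m \norm{B_m} \ge \delta_B) \le \tilde{\varepsilon}/\delta_B$. Third, Kolmogorov's inequality (Lemma \ref{thrm:Kolineq}) with exponent $p$ combined with the type-$p$ condition (\ref{eqn:Woy}) yields $\PP(\max_m \norm{A_m} \ge \delta_A) \le \EE \norm{A_{N+K(N)}}^p / \delta_A^p$, and the convexity estimate $\norm{X - \EE X}^p \le 2^{p-1}(\norm{X}^p + \norm{\EE X}^p)$ gives
\[
\EE\norm{A_{N+K(N)}}^p \le C \cdot 2^{p-1}\left( \sum \EE\norm{X_i'}^p + \sum \norm{\EE X_i'}^p\right).
\]
The first sum is bounded by $\tilde{\varepsilon}$ because $\phi_0(t) = t^p$ for $t \le 1$, while the second is bounded by $\tilde{\varepsilon}^p$ via the elementary inequality $\sum a_i^p \le (\sum a_i)^p$ for nonnegative $a_i$ and $p \ge 1$ (reducing to the case $\sum a_i = 1$, where $a_i^p \le a_i$).

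The proof then concludes by combining these estimates and partitioning the budgets $\varepsilon$ and $\lambda$ across the pieces (and across the two summands of the Kolmogorov estimate). The three constraints defining $\tilde{\varepsilon}$ will arise as follows: the $\varepsilon\lambda/6$ term should come from the Markov bound on $B_m$, the $\lambda\varepsilon^p/(2^{3p-1}\cdot 3 C)$ term from the ``$\tilde{\varepsilon}$-summand'' of the Kolmogorov/type-$p$ bound on $A_m$, and the $(\lambda\varepsilon^p/(2^{2p-1}\cdot 3))^{1/p}$ term from the ``$\tilde{\varepsilon}^p$-summand''. The main obstacle will be the careful bookkeeping of constants so that these three constraints emerge in exactly the stated form, in particular choosing the correct thresholds $\delta_A, \delta_B$ and the correct $\lambda$-splits across the sub-events.
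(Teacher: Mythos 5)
Your three-way decomposition $S_m - S_N = A_m + B_m + C_m$ is a genuinely different route from the paper's. The paper splits only two ways, $\norm{S_n - S_N}\le\norm{S_n'-S_N'}+\norm{S_n''-S_N''}$ with budget $\varepsilon/2+\varepsilon/2$, and applies Kolmogorov's inequality (Lemma \ref{thrm:Kolineq}) directly to $S_n'-S_N'$ even though the $X_i'$ do not have zero mean, which the lemma as stated requires; it then decomposes $\sum X_i' = \sum(X_i'-\EE X_i')+\sum\EE X_i'$ \emph{inside} the $p$-th power expectation. Your plan is cleaner in that you apply Kolmogorov only to the genuinely mean-zero sum $A_m$ and treat the drift $C_m$ deterministically, so you avoid that subtlety.

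However, the ``careful bookkeeping'' you defer to cannot land on the stated $\tilde\varepsilon$, for a concrete reason. In the paper the $\tilde\varepsilon^p$-summand comes from bounding $\norm{C_K}^p=\norm{\sum\EE X_i'}^p\le\tilde\varepsilon^p$ \emph{after} the split $\EE\norm{A_K+C_K}^p\le 2^{p-1}(\EE\norm{A_K}^p+\norm{C_K}^p)$; crucially $\norm{C_K}^p$ lies outside the type-$p$ bound, so it carries no factor of $C$, and the resulting constraint is $\tilde\varepsilon\le(\lambda\varepsilon^p/(2^{2p-1}\cdot3))^{1/p}$. In your plan the $\tilde\varepsilon^p$-summand is the term $\sum\norm{\EE X_i'}^p$ \emph{inside} $\EE\norm{A_K}^p\le C\cdot 2^{p-1}(\sum\EE\norm{X_i'}^p+\sum\norm{\EE X_i'}^p)$, so it is multiplied by $C$, and the corresponding constraint becomes $\tilde\varepsilon\le(\lambda\delta_A^p/(3C\cdot 2^{p-1}))^{1/p}$. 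Since $C=(2B)^p\ge 2^p$, no admissible threshold $\delta_A<\varepsilon$ removes this extra factor, so this constraint is strictly more demanding than the stated one. On top of this, your union bound needs a third budget $\delta_C>\tilde\varepsilon$ out of the total $\varepsilon$, tangling $\tilde\varepsilon$ with the thresholds $\delta_A,\delta_B$ in a circular way and shifting the numerical constants from the $\varepsilon/2$ splits the paper uses; one clean fix is to use Jensen $\norm{\EE X_i'}^p\le\EE\norm{X_i'}^p$ to absorb $\sum\norm{\EE X_i'}^p$ into the $\tilde\varepsilon$-summand (as the paper does), so the $\tilde\varepsilon^p$-constraint comes only from $\norm{C_K}^p$, but that requires folding $C_m$ back into the $p$-th power estimate rather than isolating it, i.e.\ essentially the paper's two-piece decomposition. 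Your route will yield a valid rate, just not the one stated.
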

\begin{proof}
As in the proof presented in \cite{woyczynski1974random}, for each $i\in \NN$, let $X_i'= X_i1_{\{\norm{X_i}\le 1\}}$ and $X_i''= X_i1_{\{\norm{X_i}> 1\}}$. Clearly $X_i = X_i'+X_i''$ and $\seq{X_n'}$ and $\seq{X_n''}$ are independent random variables taking values in $\BB$. Let $S_n' = \sum_{i=0}^nX_i'$ and $S_n'' = \sum_{i=0}^nX_i''$.
   Suppose $\varepsilon > 0, \lambda > 0, K:\mathbb{N} \to \mathbb{N}$ are given, we have $N \le \Phi(\Tilde{\varepsilon},K)$ such that
   $$\sum_{i=N+1}^{K(N)+N}\EE(\phi_0(\norm{X_i})) < \Tilde{\varepsilon}.$$
   
   Now setting $K:= K(N)+N$ gives,
 \begin{equation*}
  \begin{aligned}
     \mathbb{P}\left(\max_{N \le n \le K}\norm{S_n -  S_N} > \varepsilon\right)
     &\le  \mathbb{P}\left(\max_{N \le n \le K}\norm{S_n' -  S_N' + S_n'' -  S_N''} > \varepsilon\right)\\
    &\le  \mathbb{P}\left(\max_{N \le n \le K}\left(\norm{S_n' -  S_N'} + \norm{S_n'' -  S_N''}\right) > \varepsilon\right)\\   &\le\mathbb{P}\left(\max_{N \le n \le K}\norm{S_n' -  S_N'}  > \frac{\varepsilon}{2} \lor \max_{N \le n \le K}\norm{S_n'' -  S_N''}  > \frac{\varepsilon}{2}\right)\\
   &\le\mathbb{P}\left(\max_{N \le n \le K}\norm{S_n' -  S_N'}  > \frac{\varepsilon}{2}\right)+\mathbb{P}\left( \max_{N \le n \le K}\norm{S_n'' -  S_N''}  > \frac{\varepsilon}{2}\right).
  \end{aligned}
 \end{equation*}
Now by Theorem  \ref{thrm:Kolineq} and the definition of $\phi_0$, we have
\begin{equation*}
\begin{aligned}
&\mathbb{P}\left(\max_{N \le n \le K}\norm{S_n'' -  S_N''}  > \frac{\varepsilon}{2}\right) \le \frac{2\EE\left(\norm{\sum_{i=N+1}^{K}X_i''}\right)}{\varepsilon}\\
&\qquad\qquad\frac{2\EE\left(\sum_{i=N+1}^{K}\norm{X_i''}\right)}{\varepsilon} \le \frac{2\sum_{i=N+1}^{K}\EE(\phi_0(\norm{X_i}))}{\varepsilon}
\end{aligned}
\end{equation*}
and we have
\begin{equation*}
    \begin{aligned}
       & \mathbb{P}\left(\max_{N \le n \le K}\norm{S_n' -  S_N'}  > \frac{\varepsilon}{2}\right)
        \le \frac{2^p}{\varepsilon^p}\EE\left(\norm{\sum_{i=N+1}^{K}X_i'}^p\right)\\
        &\qquad\qquad\le\frac{2^p}{\varepsilon^p}\EE\left(\left(\norm{\sum_{i=N+1}^{K}(X_i'-\EE(X_i'))} +  \norm{\sum_{i=N+1}^{K}\EE(X_i')} \right)^p\right)\\
        &\qquad\qquad\le\frac{2^{2p-1}}{\varepsilon^p}\EE\left(\norm{\sum_{i=N+1}^{K}(X_i'-\EE(X_i'))}^p + \norm{\sum_{i=N+1}^{K}\EE(X_i')}^p\right)\\
        &\qquad\qquad\le\frac{2^{2p-1}}{\varepsilon^p}\left(C\sum_{i=N+1}^{K}\EE(\norm{X_i'-\EE(X_i')}^p) + \norm{\sum_{i=N+1}^{K}\EE(X_i'')}^p\right)\\
        &\qquad\qquad\le\frac{2^{2p-1}}{\varepsilon^p}\left(C2^{p-1}\left(\sum_{i=N+1}^{K}\EE(\norm{X_i'}^p) + \sum_{i=N+1}^{K(N)}\EE(\norm{X_i'}^p)\right) + \left(\EE\left(\norm{\sum_{i=N+1}^{K}X_i''}\right)\right)^p\right)\\
    \end{aligned}
\end{equation*}
\begin{equation*}
    \begin{aligned}
        &\le\frac{2^{2p-1}}{\varepsilon^p}\left(C2^{p}\left(\sum_{i=N+1}^{K}\EE(\phi_0(\norm{X_i}))\right) + \left(\sum_{i=N+1}^{K}\EE(\phi_0(\norm{X_i}))\right)^p\right)\\
        &\qquad\qquad = \frac{2^{3p-1}C}{\varepsilon^p}\sum_{i=N+1}^{K}\EE(\phi_0(\norm{X_i})) + \frac{2^{2p-1}}{\varepsilon^p}\left(\sum_{i=N+1}^{K}\EE(\phi_0(\norm{X_i}))\right)^p.
    \end{aligned}
\end{equation*}
The first inequality follows from Theorem \ref{thrm:Kolineq}, the second inequality by the triangle inequality, and the third inequality follows from the fact that if $a,b>0$, then
\begin{equation}
    \label{eq:jensen}
    \left(\frac{a+b}{2}\right)^p \le \frac{a^p + b^p}{2}.
\end{equation}
 The fourth inequality follows from the fact that $\BB$ is of type $p$ and $0 = \EE(X_i) = \EE(X_i') + \EE(X_i'')$, for all $i \in \NN$. The fifth inequality follows from the triangle inequality and (\ref{eq:jensen}). The sixth inequality follows from the definition of $\phi_0$.\\

 Putting all of this together and using the definition of $\varepsilon$, we get 
 \begin{equation*}
 \begin{aligned}
     & \mathbb{P}\left(\max_{N \le n \le K}\norm{S_n -  S_N} > \varepsilon\right) \le \frac{2\sum_{i=N+1}^{K}\EE(\phi_0(\norm{X_i}))}{\varepsilon}  + \\
     &\qquad\qquad\frac{2^{3p-1}C}{\varepsilon^p}\sum_{i=N+1}^{K}\EE(\phi_0(\norm{X_i})) + \frac{2^{2p-1}}{\varepsilon^p}\left(\sum_{i=N+1}^{K}\EE(\phi_0(\norm{X_i}))\right)^p \le \lambda.
 \end{aligned}
 \end{equation*}
\end{proof}
 From the above, we immediately get 
 \begin{corollary}[Quantitative version of Theorem 2a of \cite{woyczynski1974random}]
 \label{thrm:quant2a}
 Let $\seq{X_n}$ be a sequence of independent random variables taking values in $\BB$, each with expected value $0$. Let $0< a_0 \le a_1 \le \cdots$ be such that $a_n \to \infty$. Further suppose we have a sequence of functions $\seq{\phi_n: \RR^+ \to \RR^+}$ such that,
     \begin{equation*}
         \frac{\phi_n(t)}{t} \mbox{ and } \frac{t^p}{\phi_n(t)} \mbox{ are nondecreasing for all } n \in \NN.
     \end{equation*}
     If we have,
     $$\sum_{k=0}^\infty \frac{\EE(\phi_k(\norm{X_k}))}{\phi_k(a_k)} < \infty$$ 
     and converges with rate of Cauchy metastability $\Phi$,
     
     then 
     $$\sum_{k=0}^n\frac{X_k}{a_k}$$ converges with rate of Cauchy metastability $\Delta_{\Phi}$, where $\Delta_\Phi$ is defined as in Theorem \ref{thrm:quant2}. 
 \end{corollary}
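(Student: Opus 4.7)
The plan is to reduce Corollary \ref{thrm:quant2a} to a direct application of Theorem \ref{thrm:quant2} by passing from $\seq{X_n}$ to the rescaled sequence $Y_k := X_k/a_k$. The sequence $\seq{Y_n}$ is again a sequence of independent $\BB$-valued random variables with $\EE(Y_k) = 0$, and its partial sums coincide with $\sum_{k=0}^n X_k/a_k$. To invoke Theorem \ref{thrm:quant2} it therefore suffices to exhibit a rate of Cauchy metastability for $\sum_{k=0}^n \EE(\phi_0(\norm{Y_k}))$ bounded by $\Phi$; once that is in hand, the rate $\Delta_\Phi$ delivered by Theorem \ref{thrm:quant2} becomes precisely the advertised rate.

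The heart of the argument is the pointwise inequality
\[
\phi_0\!\left(\frac{\norm{X_k(\omega)}}{a_k}\right) \le \frac{\phi_k(\norm{X_k(\omega)})}{\phi_k(a_k)}
\]
valid for every $k \in \NN$ and $\omega \in \Omega$. To prove it I will set $t = \norm{X_k(\omega)}$ and split on whether $t \le a_k$ or $t > a_k$. In the first case $\phi_0(t/a_k) = (t/a_k)^p$, and the inequality reduces to $t^p/\phi_k(t) \le a_k^p/\phi_k(a_k)$, which is exactly the monotonicity of $t \mapsto t^p/\phi_k(t)$ applied at $t \le a_k$. In the second case $\phi_0(t/a_k) = t/a_k$, and the inequality becomes $\phi_k(a_k)/a_k \le \phi_k(t)/t$, which is the monotonicity of $t \mapsto \phi_k(t)/t$ applied at $a_k < t$. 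Both monotonicity hypotheses on $\phi_k$ are used in an essential way here, and this case split is where I expect the main (and essentially only) obstacle to lie. Integrating the inequality then yields $\EE(\phi_0(\norm{Y_k})) \le \EE(\phi_k(\norm{X_k}))/\phi_k(a_k)$.

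The remaining step is routine: since both sequences of partial sums are nondecreasing and the termwise domination gives, for all $N \le M$,
\[
\sum_{k=N+1}^M \EE(\phi_0(\norm{Y_k})) \le \sum_{k=N+1}^M \frac{\EE(\phi_k(\norm{X_k}))}{\phi_k(a_k)},
\]
any rate of Cauchy metastability for the right-hand partial sums is automatically one for the left-hand partial sums. In particular $\Phi$ serves as a rate of Cauchy metastability for $\sum_k \EE(\phi_0(\norm{Y_k}))$. Feeding the sequence $\seq{Y_n}$ together with this rate into Theorem \ref{thrm:quant2} then produces $\Delta_\Phi$ as the claimed rate of metastable almost sure convergence for $\sum_{k=0}^n X_k/a_k$, completing the proof.
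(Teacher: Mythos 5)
Your proposal is correct and takes essentially the same approach as the paper: both reduce to Theorem~\ref{thrm:quant2} applied to the rescaled sequence $Y_k = X_k/a_k$ via the termwise domination $\phi_0(\norm{X_k}/a_k) \le \phi_k(\norm{X_k})/\phi_k(a_k)$, the paper packaging this as the observation that the normalized function $\Gamma_n(t) = \phi_n(a_n t)/\phi_n(a_n)$ satisfies $\Gamma_n(1)=1$ and the two monotonicity conditions hence dominates $\phi_0$, while you prove the equivalent pointwise inequality directly by the case split $t \le a_k$ versus $t > a_k$ (which is exactly the unwound proof of the paper's claim about $\Gamma$).
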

 \begin{proof}
  For each $n \in \NN$, let
  $$\Gamma_n(t) =\frac{\phi_n(a_nt)}{\phi_n(a_n)}.$$
  It is easy to see that for every function $\Gamma$ with $\Gamma(1) = 1$ and both
  \begin{equation*}
      \frac{\Gamma(t)}{t}\mbox{ and } \frac{t^p}{\Gamma(t)}
  \end{equation*}
  nondecreasing, we have $\Gamma(t) \ge \phi_0(t)$ for all $t \ge 0$. One can easily check that for each $n \in \NN$, $\Gamma_n$ satisfies this property. Thus, a rate of convergence for 
  $$\sum_{k=0}^n \frac{\EE(\phi_k(\norm{X_k}))}{\phi_k(a_k)}= \sum_{k=0}^n\EE\left(\Gamma_k\left(\norm{\frac{X_k}{a_k}}\right)\right)=$$ 
  will be a rate of convergence for $$\sum_{k=0}^n\EE\left(\phi_0\left(\norm{\frac{X_k}{a_k}}\right)\right)$$
  and the result follows from Theorem \ref{thrm:quant2}.
 \end{proof}
 We can now prove a quantitative version of the main result of \cite{woyczynski1974random}.
 \begin{theorem}
 \label{thrm:quantchung}
  Let $\seq{X_n}$ be a sequence of independent random variables taking values in $\BB$, each with expected value $0$.  Let $0< a_0 \le a_1 \le \cdots$ be such that $a_n \to \infty$ and  
$f_{\seq{a_n}}(x) := \min\{n \in \NN : a_n \ge x\}$. Further suppose we have a sequence of functions $\seq{\phi_n: \RR^+ \to \RR^+}$ such that,
     \begin{equation}
     \label{eqn:chungp}
         \frac{\phi_n(t)}{t} \mbox{ and } \frac{t^p}{\phi_n(t)} \mbox{ are nondecreasing for all } n \in \NN.
     \end{equation}
 For each $\lambda \in \QQ^+$, let $\seq{z_n(\lambda)}$ be a sequence of natural numbers satisfying,
\begin{equation*}
\mathbb{P}\left(\bigcup_{i=0}^n\left\{\norm{\sum_{k= 0}^i\frac{X_k}{a_k}} \ge z_n(\lambda)\right\}\right)\le \lambda
    \end{equation*}
    for all $n \in \NN$.  
 Suppose 
 $$\sum_{k=0}^\infty \frac{\EE(\phi_k(\norm{X_k}))}{\phi_k(a_k)} < \infty$$
 and converges with rate of Cauchy metastability $\Phi$. Then $\frac{S_n}{n}$ converges to 0 almost surely with a rate of metastable almost sure convergence
 $$\kappa^P_{\Delta_{\Phi}, \seq{a_n},\seq{z_n}}$$ 
 \end{theorem}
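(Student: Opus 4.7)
The plan is to combine the two main quantitative ingredients developed earlier in this section: the quantitative Woyczynski-type bound of Corollary \ref{thrm:quant2a} and the quantitative probabilistic Kronecker's lemma of Corollary \ref{cor:qauntprobkron}. The proof mirrors the classical deterministic template, where Kronecker's lemma (applied to the weights $a_n$ and to $x_k := X_k/a_k$) converts convergence of $\sum_{k} X_k/a_k$ into convergence of $S_n/a_n$ to $0$; here, the probabilistic analogue plays the corresponding role.

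First, I would apply Corollary \ref{thrm:quant2a} to $\seq{X_n}$, $\seq{a_n}$, $\seq{\phi_n}$, using the hypothesis (\ref{eqn:chungp}) and the assumed rate of Cauchy metastability $\Phi$ for the series $\sum_{k=0}^\infty \EE(\phi_k(\norm{X_k}))/\phi_k(a_k)$. The output is that the partial sums $\sum_{k=0}^n X_k/a_k$ are almost surely Cauchy with rate of metastable almost sure Cauchy convergence $\Delta_\Phi$, where $\Delta_\Phi$ is as defined in Theorem \ref{thrm:quant2}. The key mechanism here is the normalisation trick via $\Gamma_n(t) := \phi_n(a_n t)/\phi_n(a_n)$, together with the bound $\Gamma_n(t) \ge \phi_0(t)$, which reduces the general type $p$ assumption to an application of Theorem \ref{thrm:quant2}.

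Next, I would set $Y_k := X_k/a_k$ and $Z_n := \sum_{k=0}^n Y_k$. The assumption on the sequence $\seq{z_n(\lambda)}$ in the statement is precisely the condition (\ref{eqn:condition}) required by Corollary \ref{cor:qauntprobkron} applied to $\seq{Z_n}$. Feeding $\seq{Y_n}$, $\seq{a_n}$, $\seq{z_n(\lambda)}$ and the rate $\Delta_\Phi$ obtained in the previous step into Corollary \ref{cor:qauntprobkron} then yields that
\[
\frac{1}{a_n}\sum_{k=0}^n a_k Y_k \;=\; \frac{1}{a_n}\sum_{k=0}^n a_k \cdot \frac{X_k}{a_k} \;=\; \frac{S_n}{a_n}
\]
converges to $0$ almost surely, with rate of metastable almost sure convergence exactly $\kappa^P_{\Delta_\Phi,\seq{a_n},\seq{z_n}}$ as claimed.

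There is no substantive obstacle: all the real work has been concentrated in Corollary \ref{thrm:quant2a} and Corollary \ref{cor:qauntprobkron}. The only point that needs some care is verifying that the output of Corollary \ref{thrm:quant2a} (a rate of metastability for the almost uniform Cauchy property of $\sum_{k}X_k/a_k$) is in exactly the form demanded as input by Corollary \ref{cor:qauntprobkron}, so that the composition $\kappa^P_{\Delta_\Phi,\seq{a_n},\seq{z_n}}$ is well-typed and matches the expression given in the statement. Once this bookkeeping is done, the proof reduces to a short plug-in.
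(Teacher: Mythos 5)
Your proposal is correct and follows essentially the same two-step composition as the paper's own proof: apply Corollary \ref{thrm:quant2a} to obtain the rate $\Delta_\Phi$ of metastable almost sure Cauchy convergence for $\sum_{k=0}^n X_k/a_k$, and then apply Corollary \ref{cor:qauntprobkron} with $Y_k := X_k/a_k$ to conclude $\frac{S_n}{a_n}\to 0$ almost surely with rate $\kappa^P_{\Delta_\Phi,\seq{a_n},\seq{z_n}}$. (You also correctly read through two small imprecisions in the paper's wording, namely that the conclusion should concern $S_n/a_n$ rather than $S_n/n$, and that Corollary \ref{thrm:quant2a} yields a rate of metastable \emph{almost sure} Cauchy convergence.)
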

 \begin{proof}
     $$\sum_{k=0}^n \frac{\EE(\phi_k(\norm{X_k}))}{\phi_k(a_k)}$$
     converges with rate of Cauchy metastability $\Phi$ implies 
     $$\sum_{k=0}^n \frac{X_k}{a_k}$$
     converges with rate of Cauchy metastability $\Delta_{\Phi}$, by Theorem \ref{thrm:quant2a}. So, the result follows from Corollary \ref{cor:qauntprobkron}.
 \end{proof}
%%%%%%%%%%%%%%%%%%%%%%%%%%%%%%%%%%%%%%%%%%%%%%%%%%%%%%%%%%%%%%%%%%%%%%%%%%%%%%%%%%%%%%%%%%
%%%%%%%%%%%%%%%%%%%%%%%%%%%%%%%%%%%%%%%%%%%%%%%%%%%%%%%%%%%%%%%%%%%%%%%%%%%%%%%%%%%%%%%%%%
\section{Proof-theoretic transfer principles}
\label{subsec:trans}
%%%%%%%%%%%%%%%%%%%%%%%%%%%%%%%%%%%%%%%%%%%%%%%%%%%%%%%%%%%%%%%%%%%%%%%%%%%%%%%%%%%%%%%%%%
%%%%%%%%%%%%%%%%%%%%%%%%%%%%%%%%%%%%%%%%%%%%%%%%%%%%%%%%%%%%%%%%%%%%%%%%%%%%%%%%%%%%%%%%%%
In collaboration with Pischke, we developed a formal system, $\mathcal{F}^\omega[\PP]$, that allows for classical analysis on a content space (that is, a finitely additive measure space) $(\Omega, S, \PP)$ \cite{NeriPischke2023}. As with most formal systems used in proof mining, ours was formulated as an extension of $\mathcal{A}^\omega:=\WEPAomega + \DC + \QFAC$, where $\WEPAomega$ denotes Peano arithmetic in all finite types, which is an extension of many-sorted classical logic with variables ranging over the set of types $T$ defined recursively as 
\[
0\in T,\quad \rho,\tau\in T\rightarrow \rho(\tau)\in T.
\]
We use the usual conversion of denoting pure types by natural numbers, that is, setting $n+1:=0(n)$. $\DC$ denotes the axiom schema of dependent choice (which implies countable choice and, thus, full comprehension over the natural numbers), and $\QFAC$ is the axiom schema of quantifier-free choice. It is well known that large parts of classical analysis can be performed in $\mathcal{A}^\omega$, and we turn the reader to \cite{kohlenbach:08:book} for full details. 

We do not spell out all the features of $\mathcal{F}^\omega[\PP]$, but note that it extends $\mathcal{A}^\omega$ with abstract types $S$ and $\Omega$, representing the algebra and sample space of a content space, along with constants $\mathrm{eq}, \in, \cup,(\cdot)^c, \emptyset, c_\Omega, \PP$ and corresponding axioms expressing that $\mathrm{eq}$ is equality on $\Omega$, $\in$ is the element relation between objects in $\Omega$ and $S$ (both defined as characteristic functions), $c_\Omega$ is an arbitrary element of $\Omega$ (so we have a concrete object expressing the nonemptyness of $\Omega$) and $\cup,(\cdot)^c, \emptyset$ being the usual operations on the algebra $S$.

In \cite{NeriPischke2023}, extensions of $\mathcal{F}^\omega[\PP]$ are presented that allow for the treatment of integration on $(\Omega, S, \PP)$ and measurable functions. Furthermore, through the introduction of a novel extension of Bezem's \cite{bezem1985strongly} majorizability, a programme extraction theorem (metatheorem) is proven that guarantees the success of the extraction of quantitative data, independent of the content space $(\Omega, S, \PP)$ for a class of generalised $\forall\exists$-statements. This metatheorem explained the success, uniformities and complexity of the bounds extracted in the seminal paper on proof mining in probability theory \cite{Avigad-Dean-Rute:Dominated:12}.

The part of \cite{NeriPischke2023} that we are concerned with here is the transfer result given in the last section, which guarantees the lifting of quantitative data of deterministic results to quantitative data from the probabilistic analogue. This result is proven formally in the systems of \cite{NeriPischke2023}, but we give an informal description of this result here to demonstrate its relation to Kronecker's lemma easily. Furthermore, all the random variables discussed in this section shall be assumed to be real-valued.

For a sequence of real numbers $\seq{x_n}$ consider the two $\Pi_3$-formulas 
\[
P(\seq{x_n})=\forall a^0\exists b^0\forall c^0 P_0(a,b,c,\seq{x_n})
\]
and
\[
Q(\seq{x_n})=\forall u^0\exists v^0\forall w^0 Q_0(u,v,w,\seq{x_n})
\]
where $P_0$ and $Q_0$ are quantifier-free. Formally, $\seq{x_n}$ is represented by the variable $x^{1(0)}$, and we use the standard encoding of real numbers as fast converging Cauchy convergence as well as the operation $\hat{\cdot}$ that transforms an object of type $1$ into a real number to express that $P_0$ and $Q_0$ are formulas on sequences of real numbers (see \cite{kohlenbach:08:book} for details).

Now, given a sequence of real valued random variables $\seq{X_n}$, we say $\seq{X_n}$ satisfies $P$ almost uniformly, and write $P(\seq{X_n})$ $\mathrm{a.u}$, if 
\[
\forall k^0,a^0\exists b^0\forall c^0\left(\PP\left(\neg P_0(a,b,c,\seq{X_n})\right)\le 2^{-k}\right).
\]
$Q(\seq{X_n})$ $\mathrm{a.u}$ is defined similarly.

For the definition of $P(\seq{X_n})$ $\mathrm{a.u}$ to make sense, the set $\{\omega \in \Omega\,|\, P_0(a,b,c,\seq{X_n(\omega)})\}$ must be measurable, for all $a,b,c \in \NN$. This is the case when $P(\seq{X_n})$ denotes Cauchy convergence (as in (\ref{eqn:newCauchy})), as $\seq{X_n}$ is a sequence of random variables (that is measurable functions) and in this case $P(\seq{X_n})$ $\mathrm{a.u}$ denotes almost uniform convergence (as presented in (\ref{eqn:a.u})).

The transfer result of \cite{NeriPischke2023} presented a strategy for one to obtain quantitative data from a result of the for 
\begin{equation*}
    P(\seq{X_n}) \mbox{ $\mathrm{a.u}$ } \to Q(\seq{X_n}) \mbox{ $\mathrm{a.u}$, }
\end{equation*}
for all sequences of bounded random variables $\seq{X_n}$, given quantitative data for the deterministic version of this result, that is, the statement for all sequences of real numbers $\seq{x_n}$,
\begin{equation*}
    P(\seq{x_n}) \to Q(\seq{x_n}),
\end{equation*}
that arises in a particularly nice form. Furthermore, such a strategy is shown to formalise in the systems introduced in \cite{NeriPischke2023}.

For a sequence of natural numbers $\seq{\tau_n}$ and a sequence of real numbers $\seq{x_n}$, we define the majorizability relation $\seq{\tau_n} \gtrsim \seq{x_n}$ by,
\begin{equation*}
    \seq{\tau_n} \gtrsim \seq{x_n} :\equiv \forall n \in \NN\,(\tau_{n+1}\ge \tau_n \land \tau_n\ge |x_n|)
\end{equation*}
The transfer result of \cite{NeriPischke2023} states:
\begin{theorem}[Theorem 10.2 of \cite{NeriPischke2023}]
    \label{thrm:trans}
Given functionals $V,A,C$ such that for all sequences of real numbers $\seq{x_n}$
\begin{equation*}
    \begin{aligned}
        \forall \underbrace{\seq{\tau_n},B,u,w}_{\omega}( \seq{\tau_n} \gtrsim \seq{x_n}\land P_0(A(\omega),B(A(\omega)),C(\omega),\seq{x_n})\\
        \to Q_0(u,V(\seq{\tau_n},B,u),w,\seq{x_n})),
    \end{aligned}
\end{equation*}
(where we quantify over all sequences of natural numbers $\seq{\tau_n}$) then for all sequences of bounded random variables $\seq{X_n}$, we can construct $V',A',C'$ such that
\begin{equation*}
    \begin{aligned}
        \forall \underbrace{B,k,u,w}_{\alpha}( \PP(\neg P_0(A'(\alpha),B(k,A'(\alpha)),C'(\alpha), \seq{X_n}))\leq 2^{-k}\\
        \to \PP(\neg Q_0(u,V'(B,k,u),w, \seq{X_n}))\leq 2^{-k})
    \end{aligned}
\end{equation*}
Furthermore, the functionals $V',A',C'$ can be constructed from the proof, and depending on $V, A,C$ and also on  $\seq{X_n}$, via a nondecreasing sequence of natural numbers $\seq{Z_n}$ witnessing the boundedness of $\seq{X_n}$, that is, satisfying for every $n \in \NN$ and  $\omega \in \Omega$, we have  $Z_n \ge |X_n(\omega)|$. 
\end{theorem}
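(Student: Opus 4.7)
The plan is to construct $V', A', C'$ by specialising the deterministic functionals $V, A, C$ at the uniform majorant $\seq{Z_n}$ of $\seq{X_n}$. Given the parameter tuple $\alpha = (B, k, u, w)$, where $B : \NN \times \NN \to \NN$, I would set $B_k(a) := B(k, a)$ and define
\[
A'(\alpha) := A(\seq{Z_n}, B_k, u, w), \quad C'(\alpha) := C(\seq{Z_n}, B_k, u, w), \quad V'(B, k, u) := V(\seq{Z_n}, B_k, u).
\]
Note that $V'$ is independent of $w$, matching the Dialectica shape of the deterministic $V$, and that these functionals depend on $\seq{X_n}$ only through the fixed bounding sequence $\seq{Z_n}$.

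The main step would then be to establish a pointwise set-theoretic inclusion between the two relevant events. Fix $B, k, u, w$ and consider any $\omega \in \Omega$ with $\neg Q_0(u, V'(B, k, u), w, \seq{X_n(\omega)})$. The assumption that $\seq{Z_n}$ is a nondecreasing sequence of naturals with $Z_n \geq \abs{X_n(\omega)}$ for every $\omega \in \Omega$ gives the majorization $\seq{Z_n} \gtrsim \seq{X_n(\omega)}$. Applying the deterministic hypothesis to $\seq{x_n} := \seq{X_n(\omega)}$ with inputs $\seq{\tau_n} := \seq{Z_n}$, $B := B_k$, $u$, $w$, and unfolding the definitions of $A', C', V'$, yields the implication
\[
P_0(A'(\alpha), B(k, A'(\alpha)), C'(\alpha), \seq{X_n(\omega)}) \to Q_0(u, V'(B, k, u), w, \seq{X_n(\omega)}).
\]
Contraposing, $\neg P_0(A'(\alpha), B(k, A'(\alpha)), C'(\alpha), \seq{X_n(\omega)})$ must hold, so the $\neg Q_0$ event is contained in the $\neg P_0$ event. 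The conclusion then follows by monotonicity of $\PP$.

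The hard part is not combinatorial but foundational: one must check that the two events above are measurable so that the probabilities make sense, and that the whole construction is admissible within $\mathcal{F}^\omega[\PP]$. Measurability holds because $P_0$ and $Q_0$ are quantifier-free formulas built from the measurable $\seq{X_n}$, and the machinery turning such sets into elements of the algebra $S$ is already available in \cite{NeriPischke2023}. No additional choice principle beyond what was used to extract $V, A, C$ is required, since I am only instantiating previously extracted functionals at a canonical majorant. Conceptually, this is precisely what makes the transfer so efficient: unlike the naive route through almost sure convergence plus Egorov's theorem (as used in the proof of Theorem \ref{thrm:probkron}), no quantitative analysis of Egorov is needed, which would otherwise introduce bar-recursive complexity.
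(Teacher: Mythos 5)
Your proposal is correct and matches the intended argument: substitute the uniform majorant $\seq{Z_n}$ into the deterministic functionals, observe that $\seq{Z_n} \gtrsim \seq{X_n(\omega)}$ pointwise so the deterministic implication applies at every sample point, contrapose to get the event inclusion $\{\neg Q_0\} \subseteq \{\neg P_0\}$, and conclude by monotonicity of $\PP$. The paper itself only cites this result from \cite{NeriPischke2023} without reproducing the proof, but your argument is exactly the uniformly-bounded specialization of the paper's proof of its generalization (Theorem \ref{thrm:transnew}), where the probability split and the continuity modulus $M$ become unnecessary because the majorization holds on all of $\Omega$.
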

The premise of Theorem \ref{thrm:trans} represents a solution to the Dialectica interpretation of 
\begin{equation}
\label{eqn:detimp}
    \forall \seq{x_n}\,(P(\seq{x_n}) \to Q(\seq{x_n})).
\end{equation}
formalised suitably in $\mathcal{F}^\omega[\PP]$. Furthermore, the functionals $V, A, C$ represent realisers of essentially the Dilectica interpretation of (\ref{eqn:detimp}), depending on $\seq{x_n}$ only via a bounding sequence and with additional uniformities.

If such a solution to the Dialectica interpretation can be found for (\ref{eqn:detimp}), then the conclusion of the transfer result solves the Dialectica interpretation of
\begin{equation*}
    P(\seq{X_n}) \mbox{ $\mathrm{a.u}$ } \to Q(\seq{X_n}) \mbox{ $\mathrm{a.u}$, }
\end{equation*}
for bounded sequences of random variables $\seq{X_n}$. Such a solution can then be easily used to obtain a quantitative result, where form $P(\seq{X_n}) \mbox{ $\mathrm{a.u}$}$ can be converted to rates for $Q(\seq{X_n}) \mbox{ $\mathrm{a.u}$.}$
 
If we take  
$$P_0(a,b,c,\seq{x_n}) := \forall m,n \in [b;c] \left|\sum_{k=0}^n x_k - \sum_{k=0}^m x_k \right| \le 2^{-a}$$
 and 
$$Q_0(u,v,w,\seq{x_n}) := \forall n \in [v;w] \left|\frac{1}{a_n}\sum_{k=0}^n a_kx_k \right| \le 2^{-u}$$

with the sufficient assumptions on $\seq{a_n}$, Kronecker's lemma becomes:
\begin{equation*}
    \forall \seq{x_n}(P(\seq{x_n}) \to Q(\seq{X_n}).
\end{equation*}
and the probabilistic Kronecker's lemma is 
\begin{equation*}
    P(\seq{X_n}) \mbox{ $\mathrm{a.u}$ } \to Q(\seq{X_n}) \mbox{ $\mathrm{a.u}$ }.
\end{equation*}
Thus, from the proof of Theorem \ref{thrm:quantkron}, taking
\begin{equation*}
\begin{aligned}
    &A(\seq{\tau_n}, B,u,w):= u+2\\
    &C(\seq{\tau_n}, B,u,w):= w\\
    &V(\seq{\tau_n}, B,u):= \max\left\{B(u+2),f_{\seq{a_n}}\left(2^{u+2}a_{B(u+2)}\sum_{i=0}^{B(u+2)}\tau_i\right) \right\}
\end{aligned}
\end{equation*}
with $f_{\seq{a_n}}$ as defined in Theorem \ref {thrm:quantkron}, allows the premise for Theorem \ref{thrm:trans} to be satisfied. The proof of Theorem \ref{thrm:trans} given in \cite{NeriPischke2023} can be used to obtain a quantitative version of the probabilistic Kronecker's lemma for bounded random variables. However, in the computational interpretation of Kronecker's lemma we gave in Theorem \ref{thrm:quantprobkron1}, we did not assume that the random variables were bounded, only that they were almost surely finite. The reason we were able to do this was due to the fact that the computational solution to Kronecker's lemma we obtained contained further uniformities; that is, the functional $A, C$ and $V$ are uniformly continuous in their first argument and, in particular, all have a modulus of continuity given by $M(B,u,w):= B(u+2)$, that is for all sequences $\seq{\tau_n^1}$ and $\seq{\tau_n^2}$, $B:\NN\to \NN$ and $u,w \in \NN$ if we have $\tau_n^1 = \tau_n^2$ for all $n \le M(B,u,w)$ then $V(\seq{\tau_n^1}, B,u) = V(\seq{\tau_n^2}, B,u)$ with the same holding for $A$ and $C$. This arises in Kronecker's lemma because the functionals $A$ and $C$ are independent of $\seq{\tau_n}$ and thus are trivially uniformly continuous. Although such a uniform solution is not guaranteed to exist in general, this tends to be true when one obtains quantitative results on the implications between two convergence statements. It is typical for $A$ and $C$ not to depend on the sequence. Furthermore, when the rate for the conclusion (in terms of the rate for the premise) does depend on the sequence, it only depends on a finite initial segment of the sequence, thus guaranteeing the uniform continuity requirement (see \cite[Remark 3.5]{kohlenbach2020rates} and \cite[Remark 3.2]{powell2021rates} for recent examples of this phenomenon in analysis.)

For such a solution, we have the following general transfer principle, which generalises the proof of Theorem \ref{thrm:quantprobkron1}:
\begin{theorem}
    \label{thrm:transnew}
Suppose we have functionals $V,A,C$ such that for all sequences of real numbers $\seq{x_n}$
\[
\forall \underbrace{\seq{\tau_n},B,u,w}_{\omega}\left( \seq{\tau_n} \gtrsim \seq{x_n}\land P_0(A(\omega),B(A(\omega)),C(\omega),\seq{x_n})\to Q_0(u,V(\seq{\tau_n},B,u),w,\seq{x_n})\right),
\]
and $A, C$ and $V$ are uniformly continuous in their first argument, each with a modulus of continuity $M(B,u,w)$. Then for all sequences of random variables $\seq{X_n}$ with a function $Z:\NN \times \NN \to \NN$ satisfying 
\begin{equation}
\label{eqn:abscondition}
    \forall k,p \in \NN\, \left(\PP\left(\bigcup_{i=0}^p\{ |X_i| > Z(k,p)\}\right) \le 2^{-k}\right)
\end{equation}

we can construct $V',A',C'$ in terms of $A,B,C,Z,M$ such that
\begin{equation*}
    \begin{aligned}
        \forall \underbrace{B,k,u,w}_{\alpha}( \PP(\neg P_0(A'(\alpha),B(k,A'(\alpha)),C'(\alpha), \seq{X_n}))\leq 2^{-(k+1)}\\ 
        \to \PP(\neg Q_0(u,V'(B,k,u),w, \seq{X_n}))\leq 2^{-k}).
    \end{aligned}
\end{equation*}
\end{theorem}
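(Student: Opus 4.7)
The plan is to generalise the argument of Theorem \ref{thrm:quantprobkron1} to this abstract setting, exploiting the modulus of continuity $M$ to collapse a pointwise, $\omega$-dependent majorising sequence down to a deterministic one. Given input $\alpha=(B,k,u,w)$, first set $p^{*}:=M(B(k,\cdot),u,w)$ and $Z^{*}:=Z(k+1,p^{*})$, and let $\tau^{*}$ denote the constant sequence with $\tau^{*}_n:=Z^{*}$. Then define $A'(\alpha):=A(\tau^{*},B(k,\cdot),u,w)$, $C'(\alpha):=C(\tau^{*},B(k,\cdot),u,w)$ and $V'(B,k,u):=V(\tau^{*},B(k,\cdot),u)$. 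Observe that each of these quantities is determined from $\alpha$ together with the ambient data $A,C,V,Z,M$ alone.

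Next, assume the premise $\PP(E_{1})\le 2^{-(k+1)}$, where $E_{1}$ is the event on which $\neg P_{0}(A'(\alpha),B(k,A'(\alpha)),C'(\alpha),\seq{X_n})$ holds. By hypothesis on $Z$, the event $E_{2}:=\bigcup_{i\le p^{*}}\{|X_i|>Z^{*}\}$ also satisfies $\PP(E_{2})\le 2^{-(k+1)}$. For each $\omega\in E_{2}^{c}$, set $\tau_n(\omega):=\max\{Z^{*},|X_0(\omega)|,\ldots,|X_n(\omega)|\}$; this yields a nondecreasing majorant of $\seq{X_n(\omega)}$ whose first $p^{*}+1$ values agree with those of $\tau^{*}$. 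By uniform continuity of $A,V,C$ in their first argument with common modulus $M$, evaluating them on $\seq{\tau_n(\omega)}$ recovers exactly $A'(\alpha),V'(B,k,u),C'(\alpha)$. Hence for $\omega\in E_{1}^{c}\cap E_{2}^{c}$ the deterministic implication applied to $\seq{X_n(\omega)}$ with majorant $\seq{\tau_n(\omega)}$ delivers $Q_{0}(u,V'(B,k,u),w,\seq{X_n(\omega)})$, and a union bound yields $\PP(\neg Q_{0}(u,V'(B,k,u),w,\seq{X_n}))\le 2^{-k}$.

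The principal subtlety is to ensure that the $\omega$-dependence of $\tau_n(\omega)$ for $n>p^{*}$ does not leak into $A',V',C'$, since these are supposed to be functions of $\alpha$ alone. This is precisely where the common modulus of continuity $M$ is indispensable: on the good event $E_{2}^{c}$ the deterministic functionals $A,V,C$ see only the first $p^{*}+1$ entries of $\seq{\tau_n(\omega)}$, which coincide with those of $\tau^{*}$. Formalising this construction inside $\mathcal{F}^\omega[\PP]$ proceeds analogously to the proof of Theorem 10.2 in \cite{NeriPischke2023}; the new ingredient is the invocation of $M$, which allows one to dispense with the boundedness assumption on $\seq{X_n}$ in exchange for the weaker hypothesis (\ref{eqn:abscondition}).
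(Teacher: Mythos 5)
Your proposal is correct and follows essentially the same route as the paper's own proof: you split according to the event where $|X_i|\le Z(k+1,M)$ for $i\le M$, use the modulus of continuity $M$ to ensure that the $\omega$-dependent majorant agrees with a fixed deterministic one on the initial segment that the functionals $A,V,C$ can "see," and finish with a union bound yielding $2^{-(k+1)}+2^{-(k+1)}=2^{-k}$. The only cosmetic differences are that you phrase the core step directly ($P_0\wedge\text{good}\Rightarrow Q_0$) where the paper argues contrapositively ($\neg Q_0\wedge\text{good}\Rightarrow\neg P_0$), and that your explicit choice $\tau_n(\omega)=\max\{Z^{*},|X_0(\omega)|,\ldots,|X_n(\omega)|\}$ should be rounded up to stay within the natural-number-valued majorants required by the $\gtrsim$ relation.
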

\begin{proof}
Take $V,A,C$ and $M$ satisfying the premise of the theorem and $\alpha=(B,k,u,w)$. Define 
\begin{gather*}
A'(\alpha):=A(\seq{z_n},B(k),u,w),\\
C'(\alpha):=C(\seq{z_n},B(k),u,w),\\
V'(B,k,u):=V(\seq{z_n},B(k),u).
\end{gather*}
With $z_n:= Z(k+1,M(B,u,w))$ for all $n \in \NN$. We have 
\begin{equation*}
\begin{aligned}
    &\PP(\neg Q_0(u,V'(B,k,u),w, \seq{X_n}))\\
    &\leq \PP\left(\neg Q_0(u,V'(B,k,u),w, \seq{X_n})\wedge\bigwedge_{i=0}^{M(B,u,w)}\{ |X_i| > Z(k+1,M(B,u,w))\}\right)\\
    &+ \PP\left(\neg Q_0(u,V'(B,k,u),w, \seq{X_n})\wedge\bigwedge_{i=0}^{M(B,u,w)}\{ |X_i| \le Z(k+1,M(B,u,w))\}\right)\\
    &\le \PP\left(\neg Q_0(u,V'(B,k,u),w, \seq{X_n})\wedge\bigwedge_{i=0}^{M(B,u,w)}\{ |X_i| \le Z(k+1,M(B,u,w))\}\right)\\
    &+ 2^{-(k+1)}. 
\end{aligned}
\end{equation*}
Now, take $\delta \in \Omega$ satisfying 
\begin{equation*}
    \neg Q_0(u,V'(B,k,u),w, \seq{X_n(\delta)})\land\bigwedge_{i=0}^{M(B,u,w)}( |X_i(\delta)| \le Z(k+1,M(B,u,w))).
\end{equation*}
For such a $\delta$, define $\seq{\tau_n(\delta)}$ to be an arbitrary increasing sequence of natural numbers such that as $\tau_n:=Z(k+1,M(B,u,w))$ for $n \le M(B,u)$ and $X_n(\delta) \le \tau_n(\delta)$ for all $n$. Therefore, we have $\seq{\tau_n(\delta)}\gtrsim \seq{X_n(\delta)}$. Therefore, by unwinding the definition of $V'$ and using the fact that for all $n\le M(B,u,w)$, $\tau_n(\delta)=Z_n$ so, by the continuity of $V$ we have $V(\seq{z_n},B(k),u) = V(\seq{\tau_n(\delta)},B(k),u)$ we must have 
\begin{equation*}
    \neg Q_0(u,V(\seq{\tau_n(\delta)},B(k),u),w, \seq{X_n(\delta)})
\end{equation*}
which implies
\begin{equation*}
    \neg P_0(A(\seq{\tau_n(\delta)},B(k),u,w),B(k,A(\seq{\tau_n(\delta)},B(k),u,w)),C(\seq{\tau_n(\delta)},B(k),u,w),\seq{X_n(\delta)}).
\end{equation*}
Now, by the continuity of $A$ and $C$, and the definition of $A'$ and $C'$ we must have,
\begin{equation*}
    \neg P_0(A'(\alpha),B(k,A'(\alpha)),C'(\alpha),\seq{X_n(\delta)}).
\end{equation*}
Therefore,
\begin{equation*}
\begin{aligned}
        \neg Q_0(u,V'(B,k,u),w, \seq{X_n})\wedge\bigwedge_{i=0}^{M(B,u)}\{ |X_i| \le Z(k+1,M(B,u))\}\\
        \to \neg P_0(A'(\alpha),B(k,A'(\alpha)),C'(\alpha),\seq{X_n})
\end{aligned}
\end{equation*}
which implies, 
\begin{equation*}
    \begin{aligned}
        \PP\left(\neg Q_0(u,V'(B,k,u),w, \seq{X_n})\wedge\bigwedge_{i=0}^{M(B,u)}\{ |X_i| \le Z(k+1,M(B,u))\}\right)\\
        \le \PP(\neg P_0(A'(\alpha),B(k,A'(\alpha)),C'(\alpha), \seq{X_n}))\leq 2^{-(k+1)}
    \end{aligned}
\end{equation*}
so we are done.
\end{proof}
\begin{remark}
Theorem \ref{thrm:transnew} is indeed formalisable in the systems of \cite{NeriPischke2023}. However, random variables in \cite{NeriPischke2023} are treated as objects of type $1(\Omega)$, and a constant majorises such objects in these systems. Therefore, the metatheorem \cite[Theorem 8.9]{NeriPischke2023} only guarantees bound extraction for theorems regarding bounded random variables. If Theorem \ref{thrm:transnew} were to be used alongside the metatheorem of \cite{NeriPischke2023}, one would have to assume that the random variables are bounded, resulting in no improvement over Theorem \ref{thrm:trans}. 
\end{remark}
%%%%%%%%%%%%%%%%%%%%%%%%%%%%%%%%%%%%%%%%%%%%%%%%%%%%%%%%%%%%%%%%%%%%%%%%%%%%%%%%%%%%%%%%%
%%%%%%%%%%%%%%%%%%%%%%%%%%%%%%%%%%%%%%%%%%%%%%%%%%%%%%%%%%%%%%%%%%%%%%%%%%%%%%%%%%%%%%%%%
\section{Computability and the strong law of large numbers}
\label{sec:computability}
%%%%%%%%%%%%%%%%%%%%%%%%%%%%%%%%%%%%%%%%%%%%%%%%%%%%%%%%%%%%%%%%%%%%%%%%%%%%%%%%%%%%%%%%%
%%%%%%%%%%%%%%%%%%%%%%%%%%%%%%%%%%%%%%%%%%%%%%%%%%%%%%%%%%%%%%%%%%%%%%%%%%%%%%%%%%%%%%%%%
We saw in Section \ref{subsec:kroncomp} that computable rates for the conclusion of Kronecker's lemma must depend on computable rates from its premise. In this short section, we show that a similar phenomenon occurs in the Strong Law of Large Numbers. 

\begin{example}
\label{ex:comp}
Let us take a recursively enumerable set, $A$, that is not recursive. Let $\seq{a_n}$ be a recursive enumeration of the elements in $A$.

Let $\seq{X_n}$ be an independent sequence of discrete random variables, with distributions given by,
\begin{equation*}
\mathbb{P}(X_n = x):=\begin{cases}
	2^{-a_n-1} & \parbox[t]{.6\textwidth}{\rm if $x = n - n2^{-a_n-1}$}\\[0.5cm]
	1-2^{-a_n-1}  & \mbox{\rm if $x = - n2^{-a_n-1}$}\\[0.5cm]
        0  & \mbox{\rm o/w.}
\end{cases}
\end{equation*}
Then, one can easily see that,
\begin{equation*}
   \EE(X_n) = 0,\mbox{ and  } \sum_{n=1}^\infty \frac{\mathrm{Var}(X_n)}{n^2} \le \frac{5}{12}.
\end{equation*}
However, there is no computable function $\phi: \QQ^+ \times \QQ^+ \to \NN$ such that
\begin{equation*}
    \forall \varepsilon, \lambda \in \QQ^+\, \forall n \ge \phi(\varepsilon,\lambda)\, \left(\PP\left(\sup_{m \ge n} \left|\frac{S_m}{m}\right| > \varepsilon\right) \le \lambda\right).
\end{equation*}
Let,
$$x_n := \sum_{k=1}^n k2^{-a_k-1}$$ 
and observe that we can write $S_n = K_n - x_n$, with $K_n \in \NN.$

Suppose there is such a computable function $\phi$, such that for all $\varepsilon, \lambda \in \QQ^+$ and $ n \ge \phi(\varepsilon,\lambda)$
\begin{equation*}
     \mathbb{P}\left(\max_{m \ge n} \left|\frac{1}{m}S_{m}\right| > \varepsilon\right) \le \lambda.
\end{equation*}
This is equivalent to 
\begin{equation*}
     \mathbb{P}\left(\forall m \ge n\, \left|\frac{1}{m}S_{m}\right| \le \varepsilon\right) > 1- \lambda, 
\end{equation*}
which is equivalent to
\begin{equation}
     \label{eqn:Prob}
     \mathbb{P}\left(\forall m \ge n\,\left(-\varepsilon + \frac{1}{m}x_{m} \le \frac{1}{m}K_{m} \le \varepsilon + \frac{1}{m}x_{m}\right)\right) > 1- \lambda. 
\end{equation}
We now describe an effective procedure to determine whether $k \in \NN$ is in $A$, which will contradict the assumption that $A$ is not a recursive set, leading us to the conclusion that no computable function $\phi$ can exist. Suppose $M \ge \phi(\frac{1}{2},2^{-k-1})$. We have, from (\ref{eqn:Prob}),
\begin{equation*}
     \mathbb{P}\left(\forall m \ge \phi\left(\frac{1}{2},2^{-k-1}\right)\,\left(-\frac{1}{2} + \frac{1}{m}x_{m} \le \frac{1}{m}K_{m} \le \frac{1}{2} + \frac{1}{m}x_{m}\right)\right) > 1- 2^{-k-1}.
\end{equation*}
Now observe 
\begin{equation}
    \label{eqn: bound}
    \frac{1}{M}x_{M} = \frac{1}{M}\sum_{k=1}^M k2^{-a_k-1} < \sum_{k=1}^M 2^{-a_k-1} < \frac{1}{2}.
\end{equation}
We have that,
$$\forall m \ge \phi\left(\frac{1}{2},2^{-k-1}\right)\,\left(-\frac{1}{2} + \frac{1}{m}x_{m} \le \frac{1}{m}K_{m} \le \frac{1}{2} + \frac{1}{m}x_{m}\right)$$
implies  $$-\frac{1}{2} + \frac{1}{M}x_{M} \le \frac{1}{M}K_{M} \le \frac{1}{2}+ \frac{1}{M}x_{M}.$$
This further implies $\frac{1}{M}K_{M} < 1$ by (\ref{eqn: bound}), which implies $X_M  = - M2^{-a_M-1}$. Thus we have, 
\begin{equation*}
    \begin{aligned}
    &\PP(X_M = - M2^{-a_M-1})\\
    &\qquad\qquad\ge\mathbb{P}\left(\forall m \ge \phi\left(\frac{1}{2},2^{-k-1}\right)\,\left(-\frac{1}{2} + \frac{1}{m}x_{m} \le \frac{1}{m}K_{m} \le \frac{1}{2} + \frac{1}{m}x_{m}\right)\right)\\
    &\qquad\qquad>1- 2^{-k-1}.
    \end{aligned}
\end{equation*}
So, $1-2^{-a_M-1} > 1- 2^{-k-1}$ which implies $a_M > k$. Thus if $M \ge \phi(\frac{1}{2},2^{-k-2})$ then $k \neq a_M$. Thus to effectively determine if $k \in A$, it suffices to check if $k = a_m$ for $m < \phi(\frac{1}{2},2^{-k-1})$ effectively, which can be done.
\end{example}

This example also demonstrates the ineffectiveness of Chung's strong law of large numbers, as it is a generalisation of Kolmogorov's.

%%%%%%%%%%%%%%%%%%%%%%%%%%%%%%%%%%%%%%%%%%%%%%%%%%%%%%%%%%%%%%%%%%%%%%%%%%%%%%%%%%%%%%%%%%
%%%%%%%%%%%%%%%%%%%%%%%%%%%%%%%%%%%%%%%%%%%%%%%%%%%%%%%%%%%%%%%%%%%%%%%%%%%%%%%%%%%%%%%%%%
\section{Concluding remarks}
\label{sec:transfer}
%%%%%%%%%%%%%%%%%%%%%%%%%%%%%%%%%%%%%%%%%%%%%%%%%%%%%%%%%%%%%%%%%%%%%%%%%%%%%%%%%%%%%%%%%%
We would like to mention our ongoing work on establishing rates for Theorem \ref{thrm:SLLN}. Both Kolmogorov's original proof and Etemadi's elementary proof (extending this result to pairwise independent random variables \cite{etemadi1981elementary}) utilise the distributions of the random variables in crucial ways. We anticipate the development of a construction similar to Example \ref{thrm:kroncomp}, demonstrating that a direct computable rate of almost sure convergence must rely on the distribution of the random variables. Some progress in this direction has been made in \cite{gacs2010constructive}; however, these results are obtained with additional structural assumptions on the underlying sample space. Furthermore, we conjecture that metastable rates of almost sure convergence can be that are independent of the distribution of the random variables.

Lastly, we note that by exploiting the monotonicity properties of the formula expressing Cauchy convergence and using the continuity of the probability measure, one can show that (\ref{eqn:a.u}) is equivalent to 
\begin{equation}
\label{eqn:infa.u}
  \forall \varepsilon, \lambda > 0\, \exists N \in \NN\,\left(\mathbb{P}\left(\sup_{ n \ge N}\norm{Y_n -  Y_N} > \varepsilon\right)\le \lambda\right).
\end{equation}
and given $\varepsilon, \lambda$ we can take the same $N$ realising both definition. The above definition tends to be taken in the probability literature (see, \cite{siegmund1975large,luzia_2018} for example). The reason we opt for (\ref{eqn:a.u}) is to demonstrate that all of our results can, in principle, be formalised in (a suitable extension) of the systems in \cite{NeriPischke2023}. The formal system fully axiomatises the theory of contents (finitely additive probability spaces), and one can only reason about infinite unions in a limited way. All of the results in this paper did not rely on infinite unions.\\

{\bf Acknowledgments:} This article was written as part of the author’s PhD studies under the supervision of Thomas Powell, and I would like to thank him for his support and invaluable guidance. The author would also like to thank Nicholas Pischke for insightful comments and discussions. The author was partially supported by the EPSRC Centre
for Doctoral Training in Digital Entertainment (EP/L016540/1).

\bibliographystyle{acm}
\bibliography{ref}
\end{document}